\newcommand{\ul}[1]{\underline{#1}}
\newcommand{\mb}[1]{\boldsymbol{#1}}
\newcommand{\ol}[1]{\overline{#1}}
\newcommand{\mc}[1]{\mathcal{#1}}
\newcommand{\ms}[1]{\mathscr{#1}}
\newcommand{\mf}[1]{\mathfrak{#1}}
\newcommand{\wh}[1]{\widehat{#1}}
\newcommand{\wt}[1]{\widetilde{#1}}
\theoremstyle{definition}
\newtheorem{definition}{\ul{Definition}}[section]
\newtheorem{proposition}[definition]{\ul{Proposition}}
\newtheorem{lemma}[definition]{\ul{Lemma}}
\newtheorem{corollary}[definition]{\ul{Corollary}}
\newtheorem{example}[definition]{\ul{Example}}
\newtheorem{construction}[definition]{\ul{Construction}}
\theoremstyle{theorem}
\newtheorem{theorem}[definition]{\ul{Theorem}}
\newtheorem*{theorem*}{\ul{Theorem}}
\theoremstyle{remark}
\newtheorem*{remark}{\ul{Remark}}
\newtheorem*{notation}{\ul{Notation}}
\def\R{\mathbb{R}}
\def\D{\mc{D}}
\def\As{\ms{A}}
\def\Ab{\mc{Ab}}
\def\Bs{\ms{B}}
\def\dl{\mc{dl}}
\def\dCart{\mc{dCart}}
\def\dgMan{\mc{dgMan}}
\def\dgCart{\mc{dgCart}}
\def\CL{\mc{CL}}
\def\MM{\mathbb{M}}
\def\NN{\mathbb{N}}
\def\U{\ms{U}}
\def\Loc{\mc{Loc}}
\def\Alg{\mc{Alg}}
\def\s{\mc{s}}
\def\dg{\mc{dg}}
\def\Com{\mc{Com}}
\def\pd{\partial}
\def\II{\mathbb{I}}
\def\Tc{\mc{T}}
\def\Qc{\mc{Q}}
\def\A{\mc{A}}
\def\B{\mc{B}}
\def\N{\mc{N}}
\def\EZ{\mc{EZ}}
\DeclareMathOperator{\Zar}{Zar}
\DeclareMathOperator{\Der}{Der}
\DeclareMathOperator{\Ho}{Ho}
\DeclareMathOperator{\Mod}{Mod}
\DeclareMathOperator{\Tor}{Tor}
\DeclareMathOperator{\Nat}{Nat}
\DeclareMathOperator{\rank}{rank}
\DeclareMathOperator{\Vect}{Vect}
\DeclareMathOperator{\gr}{gr}
\DeclareMathOperator{\Ch}{Ch}
\DeclareMathOperator{\Cart}{Cart}
\DeclareMathOperator{\id}{id}
\def\F{\mc{F}}
\def\y{\mc{y}}
\def\O{\mc{O}}
\def\Cc{\mc{C}}
\def\FF{\mathbb{F}}
\def\ve{\varepsilon}
\def\Rc{\mc{R}}
\def\SS{\mathbb{S}}
\def\sse{\subseteq}
\DeclareMathOperator{\Cov}{Cov}
\DeclareMathOperator{\op}{{op}}
\DeclareMathOperator*{\colim}{colim}
\DeclareMathOperator{\PSh}{PSh}
\DeclareMathOperator{\Spec}{Spec}
\DeclareMathOperator{\Specm}{Specm}
\DeclareMathOperator{\Hom}{Hom}
\DeclareMathOperator{\Sh}{Sh}
\def\Z{\mathbb{Z}}
\def\C{\mathbb{C}}
\def\M{\ms{M}}
\def\Ns{\ms{N}}
\def\dMan{\mc{dMan}}
\def\L{\mc{L}}
\def\AA{\mathbb{A}}
\def\la{\langle}
\def\ra{\rangle}
\def\UU{\mathbb{U}}
\def\LL{\mathbb{L}}
\def\K{\mc{K}}
\def\sSet{\mc{sSet}}
\def\Set{\mc{Set}}
\def\CI{\CcI}
\def\Mc{\mc{M}}
\def\sCAlg{\mc{sCAlg}}
\def\DGCAlg{\mc{dgCAlg}}
\def\CcI{\Cc^\infty}
\def\W{\mc{W}}
\def\l{\mc{l}}
\def\II{\mathcal{II}}
\def\AKSZ{Alexandrov--Kontsevich--Schwarz--Zaboronsky\xspace}
\newcommand{\BLX}{Behrend--Liao--Xu\xspace}
\def\TV{To\"{e}n--Vezzosi\xspace}
\def\DHI{Dugger--Hollander--Isaksen\xspace}
\def\AGV{Artin--Grothendieck--Verdier\xspace}
\def\DHKS{Dwyer--Hirschhorn--Kan--Smith\xspace}
\def\arXiv#1{\href{https://arxiv.org/abs/#1}{arXiv:#1}}
\def\sfdg{\mc{sfdg}}
\def\MR{Moerdijk--Reyes\xspace}
\begin{document}
	\title[Equivalent models of derived stacks]{Equivalent models of derived stacks}
	\author{Gregory Taroyan}
	\maketitle
	\pagenumbering{arabic}
	\begin{abstract}
		In the present paper, we establish an equivalence between several models of derived geometry. That is, we show that the categories of higher derived stacks they produce are Quillen equivalent. As a result, we tie together a model of derived manifolds constructed by Spivak--Borisov--Noel, a model of Carchedi--Roytenberg, and a model of \BLX. By results of \BLX the latter model is also equivalent to the classical \AKSZ model. This equivalence allows us to show that weak equivalences of derived manifolds in the sense of \BLX correspond to weak equivalences of algebraic models of these manifolds, thus proving the conjecture of \BLX. Our results are formulated in the framework of Fermat theories, allowing for a simultaneous treatment of differential, holomorphic, and algebraic settings. 
	\end{abstract}
	\tableofcontents
	\section{Introduction}
	Derived geometry in its modern form appeared as the culmination of a program initiated by Quillen \cite{quillen1970co}. The main goal of this approach is to study algebraic and algebro-geometric objects and their deformations {\it homotopically}. In this direction, T\"{o}en--Vezzosi~\cite{toen2005homotopical} and Lurie~\cite{lurie2004derived} developed a uniform approach to derived algebraic geometry using homotopy theory of higher sheaves.
	\par An approach to differential geometry via \(\CI\)-rings is a powerful method allowing to apply purely algebraic and categorical techniques to the smooth setting. A classical account is due to Dubuc \cite{dubuc1981c}, for a modern textbook treatment see Moerdijk--Reyes~\cite{moerdijk2013models}. Using this setting, Spivak constructed a model for derived differential geometry following the ideas of Lurie in \cite{spivak2010derived}. Spivak's construction used homotopy sheaves of homotopical \(\CI\)-rings. Borisov--Noel \cite{borisov2011simplicial} simplified Spivak's construction by replacing homotopy~\mbox{\(\CI\)-rings} with simplicial \(\CI\)-rings. Around the same time, Carchedi--Roytenberg~\cite{carchedi2012theories},~\cite{carchedi2012homological} proposed a different approach to derived differential geometry based on \emph{differential graded \(\CI\)-rings} rather than simplicial ones. The importance of derived differential geometry can't be stressed enough since it provides a uniform treatment for such important objects as DG-manifolds (Q-manifolds) originally due to Schwarz and Kontsevich \cite{schwarz1993semiclassical},\;\cite{alexandrov1997geometry}, derived critical loci (see, e.g., Vezzosi \cite{vezzosi2011derived}), and others. It also allows for calculations with various flavours of cobordisms even in settings where there are no suitable transversality results (for example, in equivariant topology), see \cite[Theorem 3.12]{spivak2010derived}. Thus it is interesting to be able to choose models of derived differential geometry better suited for calculations. 
	\par The problem of equivalence of Spivak--Borisov--Noel's and Carchedi--Roytenberg's approach remained unresolved for some time. Nuiten \cite[Corollary 2.2.10]{nuiten2018lie} used techniques developed by Lurie \cite[\S 5.5]{lurie2009higher} to prove that the \((\infty,1)\)-category of DG \(\CI\)-rings is equivalent to \((\infty,1)\)-category of functors \(\CI\to \sSet\) that preserve products up to homotopy. These can be considered to be analogous to homotopical \(\CI\)-rings of Spivak, although the construction is not quite the same. Nuiten's approach, however, does not yield a direct equivalence; instead, it reduces the case of \(\CI\)-rings to the case of commutative algebras, utilizes Dold--Kan correspondence for them and then recreates categories of simplicial and DG \(\CI\)-rings as the closure under homotopy sifted colimits of free objects. Pridham \cite[Proposition 2.13]{pridham2020differential} remarks that Nuiten's result can be extended to algebras over arbitrary Fermat theories over fields of characteristic \(0.\)
	\par In the present text, however, we explicitly construct a Quillen equivalence between categories of simplicial and DG algebras for a Fermat theory \(\Tc\) over a field of characteristic \(0.\) The key idea is to extend Quillen's original approach to the monoidal Dold--Kan correspondence for commutative algebras sketched in \cite[Remark on p.223]{quillen1969rational} for an arbitrary Fermat theory \(\Tc\). As a result, we show that the functor of normalized chains and its left adjoint, which we explicitly describe, yield a Quillen equivalence between the model categories of simplicial and DG algebras over \(\Tc.\) A pleasant aside of our work is a "hands-on" proof of the Dold--Kan correspondence for commutative algebras following Quillen's original ideas, which to our knowledge is not present in the literature.
	\par We then use this version of the Dold--Kan correspondence to establish the equivalence between simplicial and DG models for derived synthetic differential geometry (Theorem~\ref{thm:eq_for_models}). This result applies in even greater generality and provides a similar equivalence for synthetic analytic geometry in general (Theorem \ref{thm:eq_for_models_gen}). Moreover, we show that derived geometry can be done not only both on simplicial and DG sites for an arbitrary Fermat theory \(\Tc\) but on even simpler {\it Cartesian site} \(d\Cart_{\Tc}\) spanned by the duals of semifree algebras. Hence, we can uniformly define such objects as principal bundles with connection, higher gerbes and so on for all stacks by specifying their behaviour only on this simple site. 
	\par Finally, we establish an equivalence of our approach to derived differential geometry with the theory of derived manifolds developed by Behrend--Liao--Xu \cite{behrend2020derived}. As a part of this equivalence result, we show that our theory is equivalent to the approach via non-negatively graded DG (Q) manifolds of Schwarz \cite{schwarz1993semiclassical} and \AKSZ \cite{alexandrov1997geometry}. We also prove a conjecture of Behrend--Liao--Xu \cite[p.6]{behrend2020derived} about the relationship of weak equivalences between derived manifolds and quasi-isomorphisms of their function algebras.  
	\subsection{Organization of the paper and main results}
	The key technical result of this paper is the following version of the Dold--Kan correspondence for Fermat theories:
	\begin{theorem}[{Theorem \ref{thm:DK_for_Fermat}}]\label{thm:intro_1}
		Denote the normalized chains functor from simplicial vector spaces over a field \(K\) of characteristic \(0\) to chain complexes over \(K\) by \(N.\) Let \(\Tc\) be a Fermat theory over \(K\) (Definition \ref{def:Fermat_over_R}). Then \(N\) induces a right Quillen equivalence between categories of simplicial and DG algebras over \(\Tc.\)
	\end{theorem}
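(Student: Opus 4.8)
The plan is to follow Quillen's blueprint for the commutative case sketched in \cite[Remark, p.~223]{quillen1969rational}: reconstruct both homotopy theories from their free--forgetful adjunctions together with the classical Dold--Kan correspondence, so that the comparison reduces to a computation on free algebras, where the characteristic-zero hypothesis does all the work. Recall first that classical Dold--Kan is an adjoint equivalence $\Gamma\dashv N$ between simplicial $K$-vector spaces and $\Ch_{\geq0}(K)$, that $N$ is lax symmetric monoidal via the Eilenberg--Zilber (shuffle) map $\nabla\colon NV\otimes NW\to N(V\otimes W)$, and that $\nabla$ is an associative and commutative natural homotopy equivalence. Using the description of $\Tc$-algebras and of their free objects --- in which, by the Fermat condition (Definition~\ref{def:Fermat_over_R}), every $\Tc$-operation has a Taylor expansion whose coefficients are again $\Tc$-operations, so that the operation acts on the (pro-nilpotent) positive-degree part of a graded object --- the functor $N$ refines to a functor $N\colon\s\Alg_\Tc\to\dg\Alg_\Tc$. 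I would then introduce the promised explicit left adjoint $L$ by first noting that it is forced on free algebras through
\[
	\Hom_{\s\Alg_\Tc}\!\big(F^{\mathrm s}_\Tc\Gamma C,\,A\big)=\Hom\big(\Gamma C,\,A\big)=\Hom\big(C,\,NA\big)=\Hom_{\dg\Alg_\Tc}\!\big(F^{\mathrm{dg}}_\Tc C,\,NA\big),
\]
whence $L F^{\mathrm{dg}}_\Tc C:=F^{\mathrm s}_\Tc\Gamma C$, and then extending $L$ to all of $\dg\Alg_\Tc$ by colimits, using that $\dg\Alg_\Tc$ is monadic over $\Ch_{\geq0}(K)$, so that every DG $\Tc$-algebra is a reflexive coequalizer of free ones; equivalently $L$ is computed by the two-sided bar construction $B(F^{\mathrm s}_\Tc\Gamma,\,F^{\mathrm{dg}}_\Tc,\,-)$. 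The unit and counit of $L\dashv N$ are then induced from those of Dold--Kan together with this bar resolution.

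Next I would check that $L\dashv N$ is a Quillen adjunction and set up the equivalence criterion. The model structures on $\s\Alg_\Tc$ and $\dg\Alg_\Tc$ are right-induced along the forgetful functors to simplicial $K$-vector spaces and to $\Ch_{\geq0}(K)$, so fibrations and weak equivalences are created by those functors; since classical Dold--Kan makes $N$ a right Quillen equivalence at that level, $N$ preserves fibrations and trivial fibrations and reflects all weak equivalences. As every object of $\s\Alg_\Tc$ is fibrant, the standard recognition criterion reduces the Quillen-equivalence claim to showing that the derived unit $\eta_X\colon X\to NLX$ is a weak equivalence for every cofibrant $X\in\dg\Alg_\Tc$ (here $LX$ is automatically fibrant).

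To verify this I would induct over cell structure. A cofibrant DG $\Tc$-algebra is a retract of a cell algebra built by transfinite composition of pushouts of free algebras $F^{\mathrm{dg}}_\Tc(S)$ along the generating cofibrations $S$ of $\Ch_{\geq0}(K)$. In characteristic zero the functors $F^{\mathrm s}_\Tc$ and $F^{\mathrm{dg}}_\Tc$ preserve weak equivalences and $N$ is symmetric monoidal up to coherent homotopy, so $L$ and $N$ carry these cellular pushouts and filtered colimits to homotopy pushouts and homotopy colimits; the induction then reduces the claim to: for a chain complex $C$, the unit $F^{\mathrm{dg}}_\Tc(C)\to N F^{\mathrm s}_\Tc(\Gamma C)$ is a quasi-isomorphism. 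Under the Fermat--Taylor description both sides split, as graded vector spaces, into pieces built from the degree-zero part of $C$ and from symmetric powers of its positive-degree part, and $\eta_C$ is assembled from (i) the Dold--Kan isomorphism on the degree-zero part, where both sides are the free $\Tc$-algebra on $C_0$, and (ii) the iterated shuffle maps $(NW)^{\otimes n}\to N(W^{\otimes n})$. For (ii), the shuffle map is $\Sigma_n$-equivariant --- precisely because the shuffle product is strictly associative and commutative --- and a homotopy equivalence, so after passing to $\Sigma_n$-coinvariants, which in characteristic zero is exact and agrees with $\Sigma_n$-invariants, it stays a quasi-isomorphism; summing over $n$ yields $N\operatorname{Sym}_{\mathrm s}\simeq\operatorname{Sym}_{\mathrm{dg}}N$, and the Taylor-expansion operations match because they are built out of these symmetric powers and the derivatives supplied by the Fermat condition. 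Combining (i) and (ii) gives the desired quasi-isomorphism.

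The hard part will be twofold and intertwined. First, making precise that $N$ --- and, through Dold--Kan, the whole $\Tc$-algebra monad --- transports between the simplicial and DG settings: this is exactly where the Fermat condition is indispensable, since it is what expresses a $\Tc$-operation on a graded object through its derivatives, and the Taylor coefficients require the characteristic-zero denominators. Second, the core quasi-isomorphism above: although morally ``characteristic zero kills the homotopy of symmetric powers'', one must carefully track the $\Sigma_n$-equivariance of the Eilenberg--Zilber maps --- this is the precise form of Quillen's observation --- and then confirm that the Fermat-theoretic operations really are assembled from these symmetric-power pieces. By comparison, the model-categorical bookkeeping (existence of the right-induced model structures, and the reduction of the derived unit to free algebras through the cell filtration) should be routine.
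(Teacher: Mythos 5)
Your overall blueprint matches the paper's: the same explicit left adjoint on free objects ($\Qc\,\FF^g C=\FF\Gamma C$, extended to all algebras by a coequalizer presentation, cf.\ \eqref{eq:functor_Q_over_T}), the same model-categorical reduction to the derived unit on cofibrant objects, and the same characteristic-zero computation $\N\SS\Gamma V\simeq \SS^g V$ via $\Sigma_n$-equivariant Eilenberg--Zilber maps (Proposition \ref{prop:homology_commutes}). But your reduction from cellular algebras to free ones has a genuine gap. You assert that ``$L$ and $N$ carry these cellular pushouts and filtered colimits to homotopy pushouts and homotopy colimits.'' For $L$ this is just the left Quillen property, but $\N$ is a \emph{right} adjoint and does not preserve pushouts of algebras at all; what your induction needs is that $\N$ of a pushout along a generating cofibration is \emph{weakly equivalent} to the homotopy pushout of the $\N$'s, i.e.\ a derived base-change statement $\N(\B\otimes_{\Cc}\D)\simeq \N\B\otimes^{\LL}_{\N\Cc}\N\D$. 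For Fermat theories the pushout is the completed coproduct (e.g.\ $\otimes_\infty$ for $\CI$-rings), not the tensor product of underlying complexes, so this statement is essentially as hard as the theorem itself and cannot be waved through. The paper avoids cell-by-cell induction entirely: it filters the whole cellular algebra by powers of the canonical augmentation ideal (Definition \ref{def:aug_ideal}), observes that the associated graded is a free algebra on $\ol{\A}/\ol{\A}^2$ \emph{independently of the differential}, reduces to the free case there, and then deduces the equivalence from \emph{completeness} of both filtrations via \cite[Lemma 3.25]{gwilliam2018enhancing}. That completeness is not free: it is exactly the generalized Curtis connectivity theorem (Theorem \ref{thm:conn_Fermat}, resting on quasiregularity of augmentation ideals of free $\Tc$-algebras, Lemma \ref{lem:quasireg_Fermat}), plus a separate short-exact-sequence argument for non-connected algebras. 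Your proposal contains no substitute for this convergence input.

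The second issue is the one you yourself flag as ``the hard part'' but do not resolve: why the $\Tc$-completion does not change homotopy in positive degrees, i.e.\ why $\pi_{>0}(\FF\Gamma V)\cong T_0\otimes_K\pi_{>0}(\SS\Gamma V)$. Saying that the operations ``are assembled from symmetric powers and the derivatives supplied by the Fermat condition'' is not an argument; the paper's Proposition \ref{prop:homology_commutes_with_completion} proves this by exhibiting explicit polynomial generators of $\bigcap_i\ker d_i$ via Hadamard's lemma and showing, by truncating Taylor expansions of the $\Tc$-coefficients past the top degree of those generators, that the same finite set spans the homotopy after completion. Without this step (or an equivalent flatness/base-change argument for $\FF$ over $\SS$), the core quasi-isomorphism $\FF^g C\to\N\FF\Gamma C$ is only established for $\Tc=\Com_K$, not for a general Fermat theory.
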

	Theorem \ref{thm:intro_1} implies the following equivalence result:
	\begin{theorem}[{Theorem \ref{thm:site_eq}}]\label{thm:intro_eq}
		Let \(K\) be a field of characteristic \(0.\) Let \(\Tc\) be a Fermat theory over \(K\) (Definition \ref{def:Fermat_over_R}). Then there are Quillen equivalences between model topoi of sheaves over sites of simplicial, DG, and semifree DG algebras over \(\Tc.\)
	\end{theorem}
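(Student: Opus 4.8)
I would deduce the statement from the Dold--Kan comparison of Theorem~\ref{thm:intro_1} by chaining two Quillen equivalences of model topoi, the second of which presents the semifree site as a dense subsite of the DG site. Write $\mathbf{A}_{\Delta}$, $\mathbf{A}_{\mathrm{dg}}$, $\mathbf{A}_{\mathrm{sf}}$ for the categories of simplicial, DG, and semifree DG algebras over $\Tc$, and set $\mathbf{S}_{\Delta} = \mathbf{A}_{\Delta}^{\op}$, $\mathbf{S}_{\mathrm{dg}} = \mathbf{A}_{\mathrm{dg}}^{\op}$, $\mathbf{S}_{\mathrm{sf}} = \mathbf{A}_{\mathrm{sf}}^{\op}$ for the associated sites of derived affine $\Tc$-schemes with the Zariski (open-cover) topology. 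Recall that a family $\{A \to B_{i}\}$ is a cover for this topology precisely when each map is flat and on connected components the family $\{\pi_{0} A \to \pi_{0} B_{i}\}$ (resp.\ $\{H_{0} A \to H_{0} B_{i}\}$) is a Zariski cover of ordinary $\Tc$-algebras. The model topos over each site, which I denote $\mathbf{St}(-)$, is the left Bousfield localization of the projective model structure on simplicial presheaves at the algebra weak equivalences and the hypercovers; the goal is to build Quillen equivalences $\mathbf{St}(\mathbf{S}_{\Delta}) \simeq \mathbf{St}(\mathbf{S}_{\mathrm{dg}}) \simeq \mathbf{St}(\mathbf{S}_{\mathrm{sf}})$.

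First I would promote Theorem~\ref{thm:intro_1} to a statement about sites. The Quillen equivalence $L \dashv N$, with $N\colon \mathbf{A}_{\Delta} \to \mathbf{A}_{\mathrm{dg}}$ the normalized chains functor, passes to opposite categories, and I claim it is continuous and cocontinuous for the topologies: every object of $\mathbf{A}_{\Delta}$ is fibrant, so $N$ already computes its own derived functor, and there is a natural isomorphism $H_{0} N(A) \cong \pi_{0} A$ of ordinary $\Tc$-algebras; moreover, being part of an equivalence of homotopy categories, $N$ is compatible with all higher homotopy groups and hence preserves flatness, and dually for $\mathbb{L}L$. Consequently a cover on either side is taken to a cover on the other, so $N^{\op} \dashv L^{\op}$ is a Quillen equivalence of model sites respecting the topologies, and by the functoriality of the model-topos construction along such equivalences one obtains $\mathbf{St}(\mathbf{S}_{\Delta}) \simeq \mathbf{St}(\mathbf{S}_{\mathrm{dg}})$.

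For the second equivalence I would argue by \emph{density}, since the inclusion $\iota\colon \mathbf{A}_{\mathrm{sf}} \hookrightarrow \mathbf{A}_{\mathrm{dg}}$ is not a Quillen equivalence ($\mathbf{A}_{\mathrm{sf}}$ is not closed under limits). Semifree DG $\Tc$-algebras are cofibrant, every DG $\Tc$-algebra admits a functorial semifree resolution, and the topology on $\mathbf{S}_{\mathrm{dg}}$ is generated by covers by Euclidean subdomains, which remain semifree; it therefore restricts to the corresponding topology on $\mathbf{S}_{\mathrm{sf}}$, and $\iota^{\op}$ exhibits $\mathbf{S}_{\mathrm{sf}}$ as a homotopically dense subsite of $\mathbf{S}_{\mathrm{dg}}$. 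The homotopical analogue of the comparison lemma for sites then applies: restriction along $\iota^{\op}$ together with its left Kan extension is a Quillen adjunction of simplicial presheaf categories that descends, after Bousfield localization, to a Quillen equivalence $\mathbf{St}(\mathbf{S}_{\mathrm{dg}}) \simeq \mathbf{St}(\mathbf{S}_{\mathrm{sf}})$. Composing the two equivalences proves the theorem.

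The main obstacle is this last step: one has to check that restriction to semifree (and, further, Cartesian) affines discards no stacky information. Concretely this needs (i) a functorial semifree replacement that is compatible with localizations and (ii) the fact that every cover of a semifree algebra admits a refinement by semifree algebras, which is precisely where the choice of Euclidean generating covers for the topology enters; together these supply the hypotheses of the comparison lemma. The remaining points are routine bookkeeping: cutting the large algebra categories down to essentially small subcategories closed under the operations involved, and fixing compatible projective model structures so that every functor in the two-step zig-zag is genuinely a Quillen adjunction. Once the comparison step is in place the rest is formal.
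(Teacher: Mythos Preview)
Your argument is essentially correct and, for the comparison between the simplicial and DG sites, matches the paper's proof: both use that $N$ (the paper's $\N$) preserves and reflects the homotopical coverage because it identifies $\pi_*$ with $H_*$, and then invoke functoriality of the sheaf-topos construction under such equivalences (the paper packages this as Theorem~\ref{thm:eq_sh_top}, deduced from To\"en--Vezzosi's result on prestacks plus Hirschhorn's localization theorem).

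Where you diverge is the semifree step. You treat the inclusion $\iota\colon \mathbf{A}_{\mathrm{sf}}\hookrightarrow \mathbf{A}_{\mathrm{dg}}$ as requiring genuinely different machinery---a homotopical comparison lemma for dense subsites---because $\mathbf{A}_{\mathrm{sf}}$ is not a model category and $\iota$ is not a Quillen functor. The paper avoids this entirely by working throughout with \emph{relative} sites and Dwyer--Kan equivalences rather than Quillen equivalences of sites: since semifree objects are exactly the cofibrant objects of $\dg\Tc\Alg$, the inclusion $\iota$ is a Dwyer--Kan equivalence of relative categories (a standard fact, cited from \cite{dwyer2004homotopy}), and the coverage on the semifree site is \emph{defined} as the restriction of the DG coverage, so reflection of covers is tautological. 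Hence the \emph{same} Theorem~\ref{thm:eq_sh_top} applies uniformly to both steps, and none of your conditions (i) and (ii)---functorial semifree replacement compatible with localizations, refinement of covers by semifree algebras---need to be checked separately. Your route would work, but it front-loads verifications that the paper's framework absorbs into a single abstract criterion; the trade-off is that the paper's argument is shorter and more uniform, while yours is closer to classical site-theoretic intuition and makes the role of semifree resolutions more explicit.
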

	A consequence of Theorem \ref{thm:intro_eq} is the following equivalence result for \(\CI\)-rings.
	\begin{theorem}[{Theorem \ref{thm:eq_for_models}}]
		DG, semifree DG, and simplicial versions of all sites for the smooth infinitesimal analysis described in \cite[Appendix 2]{moerdijk2013models} produce Quillen equivalent sheaf topoi. 
	\end{theorem}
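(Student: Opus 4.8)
The plan is to realize each site appearing in \cite[Appendix 2]{moerdijk2013models} as an instance of the general picture behind Theorem~\ref{thm:intro_eq}, so that the latter applies essentially verbatim.

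First I would record that the algebraic theory $\CcI$ of $\CcI$-rings is a Fermat theory over $\R$ in the sense of Definition~\ref{def:Fermat_over_R}: the defining Hadamard-type condition is exactly the classical fact that every $f\in\CcI(\R^{n+1})$ can be written $f(x,s)-f(x,t)=(s-t)\,g(x,s,t)$ for some $g\in\CcI(\R^{n+2})$. Since $\R$ is a field of characteristic $0$, Theorems~\ref{thm:intro_1} and~\ref{thm:intro_eq} apply with $K=\R$ and $\Tc=\CcI$; in particular the simplicial, DG and semifree DG sheaf topoi built over the full category of finitely presented $\CcI$-algebras with any of the standard topologies are Quillen equivalent.

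It then remains to account for the restriction to the particular small sites of \cite[Appendix 2]{moerdijk2013models}. Each of these is the opposite of a small full subcategory $\mathcal{C}$ of $\CcI$-algebras --- for instance the finitely generated ones, the germ-determined finitely presented ones, or the Weil-extended algebras $\CcI(\R^n)\otimes_\infty W$ (with $W$ a Weil algebra) underlying the Cahiers topos --- equipped with a subcanonical topology $\tau$ generated by open-cover/Zariski-type families. For each such $(\mathcal{C},\tau)$ I would form, exactly as in the proof of Theorem~\ref{thm:intro_eq}, the simplicial, DG and semifree DG derived sites over $\mathcal{C}$: objects are simplicial (resp.\ DG, resp.\ semifree DG) $\CcI$-rings whose $\pi_0$ (resp.\ $H_0$) lies in $\mathcal{C}$, with the topology generated by the families that become $\tau$-covers after applying $\pi_0$ (resp.\ $H_0$). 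One then checks that the equivalences packaged in Theorem~\ref{thm:intro_eq} --- the normalized-chains Quillen equivalence between simplicial and DG algebras, and the comparison between the DG site and its dense subsite of semifree DG algebras --- restrict to these subsites: this is because $N$ identifies $\pi_0$ with $H_0$ and is compatible with finite generation and with the Weil-algebra tensor factors, because in each of these topologies a family is covering precisely when its image under $\pi_0$ (resp.\ $H_0$) is, and because every classical object admits a functorial semifree resolution, so that the inclusion of the semifree subsite remains dense and continues to induce an equivalence on sheaf topoi. Granting this, the general principle that a Quillen equivalence descends to the induced sheaf model structures --- equivalently, to the left Bousfield localizations at the corresponding classes of maps --- upgrades Theorem~\ref{thm:intro_eq} to the asserted Quillen equivalences for each of the Moerdijk--Reyes sites.

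The step I expect to be the main obstacle is precisely the site-by-site bookkeeping of the previous paragraph: verifying that each Grothendieck topology of \cite[Appendix 2]{moerdijk2013models} is detected on $\pi_0$/$H_0$ and that the subcategory cutting it out is preserved by $N$ together with its left adjoint (and matched by the semifree comparison), with the Cahiers-type site --- whose objects carry the additional Weil-algebra factor --- requiring the most care. Once these compatibilities are in place, the statement is a direct application of Theorems~\ref{thm:intro_1} and~\ref{thm:intro_eq}.
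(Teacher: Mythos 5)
Your proposal is correct and follows essentially the same route as the paper: the paper defines the derived sites $s\MM$, $DG\MM$, $SFDG\MM$ by the same $\pi_0$/$H_0$ condition, observes that the pair $(\Qc,\N)$ maps them to one another and preserves the promoted homotopical coverages, and invokes the restricted form of the main equivalence theorem (Theorem~\ref{thm:site_eq_gen}), handling the semifree subsite by the cofibrant-replacement/density argument just as you suggest. The ``site-by-site bookkeeping'' you flag as the main obstacle is likewise left largely implicit in the paper's own proof.
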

	We also have the following equivalence result between categories of higher sheaves on derived manifolds and dg and categories of higher sheaves on the smooth derived Cartesian site \(d\Cart_{\CI}.\)
	\begin{theorem}[{Theorem \ref{thm:eq_dMan}, Proposition \ref{pro:equiv_dg_Cart}}]
		The topos of higher sheaves on the relative site \(\dMan\) of derived manifolds in the sense of Behrend--Liao--Xu \cite{behrend2020derived} is Quillen equivalent to the topos of sheaves on the derived Cartesian site \(d\Cart_{\CI}.\) This topos is also Quillen equivalent to the higher sheaf topos over the site of non-negatively graded dg manifolds in the sense of \AKSZ \cite{alexandrov1997geometry}.
	\end{theorem}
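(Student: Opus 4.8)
The engine of the proof is a higher-categorical comparison lemma for sheaf topoi. The plan is to identify the derived Cartesian site $d\Cart$ with a \emph{dense subsite} --- in the sense of \textsc{sga}~4~III.4.1 and its $\infty$-categorical refinement --- of both the relative site $\dMan$ of \BLX and the site of non-negatively graded dg manifolds of \AKSZ, so that restriction of higher sheaves along these inclusions realizes the claimed Quillen equivalences of model topoi. The preliminary task is to pin down the local structure of these sites. An affine derived manifold of \BLX is, by construction, the dual of a dg $\CcI$-algebra which is locally free over $\CcI(\R^n)$ on finitely many generators in negative homological degrees with smooth degree-zero reduction, and the topology on $\dMan$ is the one pulled back along the truncation functor from open covers of the underlying classical manifolds. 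Likewise a non-negatively (cohomologically) graded dg manifold in the sense of \AKSZ and Schwarz is a smooth manifold carrying a sheaf of dg $\CcI$-algebras which is locally of exactly this shape --- the positive cohomological generators being the negative homological ones under the standard re-grading --- with the open-cover topology. In both cases the full subcategory of affine objects whose underlying space is a Cartesian space $\R^n$ is, up to the finiteness conventions fixed in the earlier sections, precisely $d\Cart$ with the restriction of the open-cover topology.

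The equivalence $\Sh(\dMan)\simeq\Sh(d\Cart)$ (Theorem~\ref{thm:eq_dMan}) then reduces to checking that this inclusion is a dense subsite inclusion: every derived manifold is covered, in the \BLX topology, by affine ones, and every affine derived manifold is covered by objects dual to semifree algebras over a Cartesian space, since a smooth manifold is locally diffeomorphic to $\R^n$ and the derived structure adds only finitely many free generators over it. Fullness and compatibility of the pulled-back topology with the open-cover topology are immediate from the descriptions above. The comparison lemma then yields an equivalence of sheaf $\infty$-topoi, and to upgrade it to a Quillen equivalence of the model presentations I would reuse the mechanism already employed for Theorem~\ref{thm:intro_eq}: the injective model structure on simplicial presheaves over either site is combinatorial and left proper, its left Bousfield localization at the covering sieves exists, and a dense subsite inclusion induces a Quillen pair between the localizations which is a Quillen equivalence precisely because it presents the equivalence of underlying $\infty$-categories just produced.

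The last assertion follows by the same argument applied to the site of non-negatively graded dg manifolds in place of $\dMan$, with $d\Cart$ once more a dense subsite carrying the restriction of the open-cover topology; this is the content of Proposition~\ref{pro:equiv_dg_Cart}. Alternatively, and more economically, one may quote the equivalence between $\dMan$ and the site of dg manifolds already established by \BLX and compose it with the first part of the theorem. Either route gives the three-way chain of Quillen equivalences between the model topoi of higher sheaves on $\dMan$, on $d\Cart$, and on the \AKSZ site; one may further splice in the Quillen equivalences with the DG and simplicial $\CcI$-sites coming from Theorems~\ref{thm:intro_1} and~\ref{thm:intro_eq}.

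The step I expect to be the main obstacle is not the abstract topos formalism but the honest verification that the Grothendieck topologies agree: one must check that the relative topology \BLX put on $\dMan$ --- defined by pulling classical open covers back through the reduction functor --- restricts on the affine and Cartesian subsites to exactly the topology used in the earlier sections, and that these subsite inclusions satisfy the hypotheses of the comparison lemma (notably that any pair of maps into a given object out of Cartesian objects can be refined through a common Cartesian object). A related, purely bookkeeping, nuisance is keeping the homological grading of dg $\CcI$-rings, the cohomological grading of \AKSZ dg manifolds, and the simplicial-degree filtration mutually consistent through the normalized-chains functor $N$; once those identifications are fixed, what remains is the formal combination of the comparison lemma with the Quillen equivalences of the earlier sections.
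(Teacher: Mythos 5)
There is a genuine gap, and it is located exactly where you declare the problem to be easy. In this paper $\dMan$ enters the statement not as a mere Grothendieck site but as a \emph{relative} site $(\dMan,\W)$: the topos of Definition~\ref{def:sheaves} is obtained by first Bousfield-localizing presheaves at the Behrend--Liao--Xu weak equivalences $\W$ --- bijections on classical loci together with quasi-isomorphisms of tangent complexes at every classical point (Definition~\ref{def:we_dMan}) --- and only then at hypercovers. A dense-subsite/comparison-lemma argument at the level of Grothendieck topologies therefore cannot by itself produce the claimed equivalence: you must in addition show that, under the function-algebra functor $\CI(-)$ (equivalently its adjoint $\dl$), the class $\W$ corresponds precisely to quasi-isomorphisms of dg $\CI$-rings. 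That identification is exactly the Behrend--Liao--Xu conjecture resolved in Corollary~\ref{cor:conj}, and it is the technical core of the paper's proof: Lemma~\ref{lem:alt_tangent} identifies the tangent complex at a classical point $P$ with $(\ker P/(\ker P)^2)^\vee$, Proposition~\ref{pro:iso_on_loci} handles the classical loci, and Theorem~\ref{thm:comparison_for_dCart} combines these with the fact that $\Omega^1_{\R}$ is left Quillen (hence computes cotangent complexes on the cofibrant, i.e.\ semifree, objects) and with \cite[Theorem 3.13]{behrend2020derived} to conclude that $\dl$ is a Dwyer--Kan equivalence of relative categories. Your proposal never engages with this step; you name ``the honest verification that the Grothendieck topologies agree'' as the main obstacle, but the topologies are the easy part, and the matching of the two a priori different notions of weak equivalence is where all the work lives.

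For comparison, the paper does not use a dense subsite at all. It proves an honest equivalence of categories $\dgCart\simeq SFDG\CI\Alg_f^{\op}$ (Proposition~\ref{pro:equiv_dg_Cart}) by explicitly constructing the dg locus functor $\l$ right adjoint to $\CI$, transports the \BLX equivalence $\omega:\dMan\to\dgMan$ to obtain $\dl=\l\omega$ on Cartesian objects (Corollary~\ref{cor:sf_dCart_eq}), proves that $\dl$ reflects weak equivalences and fibrations (Theorem~\ref{thm:comparison_for_dCart}), reduces $(\dMan,\W)$ to $(\dCart,\W)$ by a fibrant-replacement argument, and only then invokes Theorem~\ref{thm:eq_sh_top}, whose hypothesis is a Dwyer--Kan equivalence of relative sites rather than a dense subsite inclusion. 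If you wish to retain your dense-subsite skeleton you would still have to prove the reflection of weak equivalences described above, and you would additionally need a homotopical comparison lemma for non-fully-essentially-surjective site morphisms that the paper's Theorem~\ref{thm:eq_sh_top} does not supply; as written, your argument establishes at most the equivalence of the underlying $1$-toposes of the truncations, not of the model topoi in the statement.
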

	Finally, we resolve a conjecture of Behrend--Liao--Xu \cite[p. 6]{behrend2020derived} in the affirmative. While we were preparing this paper for uploading to arXiv an alternative proof of Corollary~\ref{cor:conj} was given by Carchedi \cite[Theorem 5.20]{carchedi2023derived}.
	\begin{proposition}[{Corollary \ref{cor:conj}}]
		A morphism of derived manifolds is a weak equivalence if and only if it induces a quasi-isomorphism on algebras of functions.
	\end{proposition}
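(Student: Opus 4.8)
The plan is to deduce the statement from the chain of Quillen equivalences already in place, most importantly the equivalence $\Sh(\dMan)\simeq\Sh(d\Cart)$ of Theorem~\ref{thm:eq_dMan} together with the site comparisons of Theorem~\ref{thm:site_eq}. First I would pin down the two sides. Unwinding definitions, a morphism $f\colon\mathcal{M}\to\mathcal{N}$ of derived manifolds in the sense of \BLX is a weak equivalence precisely when the induced map of representable sheaves $\y(\mathcal{M})\to\y(\mathcal{N})$ is a weak equivalence in the model topos $\Sh(\dMan)$; under the equivalences above this is the same as being an equivalence of the associated derived $\CI$-stacks, so passing between these formulations costs nothing. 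On the other side, the algebra of functions $\O(\mathcal{M}):=\Gamma(M_0,\O_\mathcal{M})$ is a DG $\CI$-algebra, functorial in $\mathcal{M}$, and ``quasi-isomorphism'' means a weak equivalence of DG $\CI$-algebras, i.e.\ a quasi-isomorphism of the underlying chain complexes.

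The heart of the argument is to identify, under the equivalence of Theorem~\ref{thm:eq_dMan}, the $d\Cart$-sheaf represented by $\mathcal{M}$ with $\Spec\O(\mathcal{M})$, where $\Spec$ sends a DG $\CI$-algebra to the sheaf it corepresents on the site $d\Cart$ (equivalently, the site of semifree DG $\CI$-algebras). A \BLX derived manifold is, locally on $M_0$, the derived zero locus of a section of a vector bundle, hence affine, so the only thing to verify is that the strict global sections $\Gamma(M_0,\O_\mathcal{M})$ already compute the derived global sections $R\Gamma(M_0,\O_\mathcal{M})$. This holds because $M_0$ is a paracompact, finite-dimensional smooth manifold: it admits smooth partitions of unity, $\O_\mathcal{M}$ is in each cohomological degree a fine sheaf of $\CI(M_0)$-modules, hence acyclic for $\Gamma(M_0,-)$, and therefore $R\Gamma(M_0,\O_\mathcal{M})\simeq\Gamma(M_0,\O_\mathcal{M})=\O(\mathcal{M})$. (Equivalently, one may invoke the affineness and gluing statements for dg manifolds used in the proof of Theorem~\ref{thm:eq_dMan}.) Granting this identification, the conclusion follows from full faithfulness of $\Spec$: mapping spaces into a corepresentable sheaf are computed by the corepresenting algebra, so $\Spec\O(f)$ is an equivalence of sheaves if and only if $\O(f)\colon\O(\mathcal{N})\to\O(\mathcal{M})$ admits a homotopy inverse, i.e.\ is a quasi-isomorphism. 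Chaining everything together: $f$ is a weak equivalence of derived manifolds $\iff$ $\y(\mathcal{M})\to\y(\mathcal{N})$ is an equivalence of $\CI$-stacks $\iff$ $\Spec\O(f)$ is an equivalence $\iff$ $\O(f)$ is a quasi-isomorphism. This reproves, by a different route, Carchedi's \cite[Theorem 5.20]{carchedi2023derived}.

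The step I expect to be the main obstacle is the identification $\mathcal{M}\simeq\Spec\O(\mathcal{M})$, and in particular the coherence claim that the function-algebra functor $\mathcal{M}\mapsto\O(\mathcal{M})$ agrees, up to natural equivalence, with the composite of the representable embedding $\dMan\hookrightarrow\Sh(\dMan)$ with the inverses of the Quillen equivalences of Theorems~\ref{thm:eq_dMan} and~\ref{thm:site_eq}. The homological input — that strict global sections of the structure sheaf of a dg manifold are already homotopically correct — is supplied cleanly by partitions of unity, but one must still check that the fibrant and cofibrant replacements implicit in these Quillen equivalences do not disturb the identification. Since the structure sheaf of a \BLX derived manifold is fine and the relevant sites are subcanonical, I expect this last point to be bookkeeping rather than a genuine difficulty; nevertheless it is where the real content of the corollary resides.
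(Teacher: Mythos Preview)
Your approach is defensible but takes a genuinely different route from the paper, and a considerably heavier one. The paper's argument is a two-line local-to-global reduction: Theorem~\ref{thm:comparison_for_dCart} already establishes the biconditional on derived \emph{Cartesian} spaces (quasi-isomorphism of function algebras gives isomorphism on classical loci by Proposition~\ref{pro:iso_on_loci} and quasi-isomorphism of tangent complexes via K\"ahler differentials and Lemma~\ref{lem:alt_tangent}; the converse is \cite[Theorem~3.13]{behrend2020derived}). One then observes that the weak-equivalence condition of Definition~\ref{def:we_dMan} is manifestly local on the base, and every derived manifold is covered by Cartesian ones; done. No sheaf topoi, no $\Spec$, no fibrant or cofibrant replacements enter.

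By contrast, you route the argument through the Quillen equivalence $\Sh(\dMan)\simeq\Sh(d\Cart)$ of Theorem~\ref{thm:eq_dMan} and the identification $\y(\M)\simeq\Spec\O(\M)$ via fineness of the structure sheaf. This works, but note two things. First, in the paper Theorem~\ref{thm:eq_dMan} is proved \emph{after} Corollary~\ref{cor:conj} and itself rests on Theorem~\ref{thm:comparison_for_dCart}, so your argument is not circular but does invoke strictly more machinery to reach the same conclusion; you are essentially re-importing the Cartesian case through the back door. Second, the step you call ``costs nothing'' --- that $f$ is a weak equivalence iff $\y(f)$ is an equivalence in $\Sh(\dMan)$ --- tacitly uses both that $\W$ is saturated (fine, it comes from a category of fibrant objects) and that the coverage is subcanonical, so that passing from prestacks to sheaves does not create new equivalences among representables; you should say so. What your route buys is a conceptual reformulation: the corollary becomes a statement about full faithfulness of $\Spec$, and the fineness/partition-of-unity input is made explicit as the reason derived manifolds are affine in the $\CI$-sense. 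What the paper's route buys is brevity and independence from the topos-theoretic superstructure.
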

	Here is an outline of the paper's contents. Section \ref{sec:background} contains the background material about homotopical algebraic geometry and Fermat theories. Section \ref{sec:conn_thm} contains a technical result known as Curtis' connectivity theorem, which we generalize to arbitrary Fermat theories (in any characteristic). Sections \ref{sec:DK_com}, \ref{sec:DK_Fermat} contain proofs of Dold--Kan equivalence for commutative algebras and algebras over an arbitrary Fermat theory in characteristic~\(0.\) This is the technical core of the paper. Section \ref{sec:models} introduces three geometric models for which we prove the equivalence of sheaf topoi: simplicial, DG, and semifree DG algebras over a Fermat theory. Section \ref{sec:main} contains the main result of this paper, Theorem \ref{thm:site_eq}, which establishes an equivalence between model topoi of higher sheaves for all three models from Section \ref{sec:models}.
	\par Section \ref{sec:apps} contains applications of Theorem \ref{thm:site_eq} to concrete Fermat theories. In \S\ref{subsec:eq_synth}, we show that all models of smooth infinitesimal analysis of Moerdijk--Reyes \cite[Appendix 2]{moerdijk2013models} can be promoted to simplicial, DG, and semifree DG models and define the same (up to Quillen equivalence) categories of sheaves. In \S\ref{subsec:eq_dg}, we show that sites of non-negatively graded dg manifolds and \(d\Cart_{\CI}\) produce Quillen equivalent topoi of higher sheaves. \S\ref{subsec:eq_der} establishes the equivalence of our approach to derived differential geometry with that of Behrend--Liao--Xu \cite{behrend2020derived} (Theorem \ref{thm:eq_dMan}). In particular, we prove that weak equivalence between derived manifolds in both senses coincide (Corollary \ref{cor:conj}). Finally, Section \ref{sec:dif_coh} contains a straightforward definition of differential cohomology for derived differentiable stacks. 
	\subsection*{Acknowledgements}
	First, we want to express our deep gratitude to our advisor Dmitri Pavlov for posing the problem and explaining how to solve it. Second, we want to thank our partner Alisa Chistopolskaia for providing constant moral support. Finally, we want to thank Sander Kupers for discussing Quillen's "Rational Homotopy Theory" with us. 
	\section{Background}\label{sec:background}
	\subsection{Homotopical algebraic geometry}
	In this section we fix the notation and introduce key concepts of homotopical algebraic geometry following To\"{e}n--Vezzosi \cite{toen2005homotopical} and Dugger--Hollander--Isaksen \cite{dugger2004hypercovers}. 
	
	\begin{definition}\label{def:coverage}
		{\it A coverage} on a category \(C\) consists of a function assigning to each object \(X\in C\) a collection of families of morphisms \(\{f_i:X_i\to X\}\) called {\it covering families} such that
		\begin{itemize}
			\item If \(\{f_i:X_i\to X\}_{i\in I}\) is a covering family and \(g:Y\to X\) is a morphism, then there exists a covering family \(\{h_j:Y_j\to Y\}\) such that each composite morphism~\(gh_j\)~factors through some \(f_i\) (see the diagram below).
			\[\begin{tikzcd}
				{Y_j} && {X_i} \\
				\\
				Y && X
				\arrow["k", from=1-1, to=1-3]
				\arrow["{h_j}"', from=1-1, to=3-1]
				\arrow["g"', from=3-1, to=3-3]
				\arrow["{f_i}", from=1-3, to=3-3]
			\end{tikzcd}\]
		\end{itemize}
	\end{definition}
	\begin{definition}
		{\it A homotopical coverage} on a relative category is a coverage on the localized category $S^{-1}C.$ A relative category equipped with a homotopical coverage will be referred to as a {\it relative site}.
	\end{definition}
	\begin{definition}\label{def:presheaves}
		Let $(C,S)$ be a small relative category. The model category $\sSet^{C^{\op}}$ (with either injective or projective model structure\footnote{We don't distinguish between these in our notation because the resulting model categories as well as their Bousfield localizations that we define below are Quillen equivalent. This convention is justified by Lemma \ref{pro:inj_proj_equiv}.}) is called the {\it model category of simplicial presheaves} on \((C,S).\) We denote it by $\PSh(C).$
		\par The left Bousfield localization of the category \(\PSh(C)\) with respect to all maps \(\y(s), s\in S_1,\) where \(\y\) is the Yoneda embedding will be referred to as {\it category of pre-stacks on \((C,S).\)} We denote it by \((C,S)^\wedge.\)
	\end{definition}
	To avoid size issues, we introduce the following definition. 
	\begin{definition}[{To\"{e}n--Vezzosi \cite[Definition A.1.1, Definition  4.1.1]{toen2005homotopical}}]\label{def:(pseudo)MC} For a universe~\(\U,\) a \(\U\)-category (see the book of Artin--Grothendieck--Verdier \cite{artin1973theorie}) is a category with all sets of morphisms belonging to \(\U\) and the set of objects being a subset of \(\U.\) A {\it \(\U\)-small model category} \(M\) is a category with a closed model structure in the sense of Hovey \cite[Definition 1.1.3]{hovey2007model} such that \(M\) is a \(\U\)-category and there are all \(\U\)-small limits and colimits.
	\end{definition}
	\begin{remark}
		In what follows we refer to \(\U\)-small categories as {\it small} categories, meaning that the categories in question are \(\U\)-small for some universe~\(\U.\) 
	\end{remark}
	In order to describe the sheaf condition on relative sites, we will need the following definition. 
	\begin{definition}[{\TV \cite[Definition 4.4.1]{toen2005homotopical}, \DHI \cite[Definition 4.2]{dugger2004hypercovers}}] Let \((C,S)\) be a small relative site with a coverage \(\K\). For an object \(x \in C\) denote by \(\Cov_K(x)\) the category of {\it homotopical covers} for \(x,\) i.e., families \(\{U_i\to x\}\) that become a covering family in \((S^{-1}C,\K).\) A simplicial object \(\F_\bullet\) in the category \(\Cov_\K(x)\) becomes a simplicial presheaf over \(\y(x)\) via the Yoneda embedding:
		\[
		\y(\F)_n=\bigsqcup_{U\in \F_n} \y(U).
		\]  
		Such \(\F_\bullet\) is referred to as a {\it homotopical hypercover} over \(x.\)
	\end{definition}
	
	\begin{definition}[{\cite[Corollary 4.6.2]{toen2005homotopical}}]\label{def:sheaves} Let \((C,S)\) be a small relative category. If \((C,S)\) carries a homotopical coverage \(\K\) the left Bousfield localization of \((C,S)^\wedge\) in the class
		\[
		\L=\{\y(\F_\bullet)\to \y(x)\mid x\in C\}
		\]
		where \(\F_\bullet\to \y(x)\) is a homotopical hypercover will be referred to as the {\it category of stacks on \((C,S)\)} and will be denoted by \(\Sh((C,S);\K).\) By a result of \TV \cite[Theorem 4.6.1]{toen2005homotopical} this localization exists. 
	\end{definition}
	\begin{remark}
		A classical account of descent for hypercover on sites can be found in the book of \AGV \cite[F. 1, E. 5, App.]{artin1973theorie} and the book of Artin--Mazur~\mbox{\cite[\S 8]{artin2006etale}}. A more modern exposition relevant for homotopy theory can be found in \DHI \cite[\S 4 onwards]{dugger2004hypercovers} and \TV \cite[\S 4.4]{toen2005homotopical}.
	\end{remark}
	Now we show that the choice of either injective or projective model structure\footnote{The same result with essentially the same proof holds for arbitrary intermediate model structures on presheaves in the sense of Jardine \cite{jardine2006intermediate}.} on the category of prestacks is irrelevant to the definition of sheaf topoi. 
	\begin{lemma}\label{pro:inj_proj_equiv}
		Consider the left Bousfield localization \(L_{\L} (C,S)^\wedge_{proj}\) of the prestack category with the projective model structure with respect to \(\L\) (see Definition \ref{def:sheaves}) and consider the left Bousfield localization \(L_{\L} (C,S)^\wedge_{inj}.\) 
		The identity functors induce a Quillen equivalence between the following model categories
		\begin{enumerate}
			\item The model topoi of presheaves with injective and projective model structures:
			\[
			sPSh(C)_{proj}\to sPSh(C)_{inj}. 
			\]
			\item The model topoi of prestacks:
			\[
			(C,S)^\wedge_{proj}\to (C,S)^\wedge_{inj}.
			\]
			\item The model topoi of sheaves:
			\[
			L_{\L}(C,S)^\wedge_{proj}\to L_{\L}(C,S)^\wedge_{inj}.
			\]
		\end{enumerate}
	\end{lemma}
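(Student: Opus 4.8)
The plan is to establish the three equivalences in turn, deducing the second and third from the first by left Bousfield localization. For (1): the injective and projective model structures on $sPSh(C)$ share the same class of weak equivalences — the objectwise weak equivalences of simplicial sets — while every projective cofibration is a levelwise monomorphism, hence an injective cofibration, and dually every injective fibration is a projective fibration. Therefore $\id\colon sPSh(C)_{proj}\to sPSh(C)_{inj}$ preserves cofibrations and trivial cofibrations, so it is a left Quillen functor, with right adjoint $\id$ in the opposite direction. It is a Quillen equivalence because for $X$ projectively cofibrant and $Y$ injectively fibrant a morphism $X\to Y$ is a weak equivalence computed in $sPSh(C)_{proj}$ if and only if it is one computed in $sPSh(C)_{inj}$ — the two classes of weak equivalences coinciding — which is precisely the defining criterion for a Quillen equivalence.

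For (2) and (3) I invoke the standard fact that left Bousfield localization carries Quillen equivalences to Quillen equivalences: if $\id\colon M_1\rightleftarrows M_2\colon\id$ is a Quillen equivalence between left proper combinatorial model categories sharing an underlying category, and $S$ is a set of morphisms whose domains and codomains are cofibrant in both, then $\id\colon L_S M_1\rightleftarrows L_S M_2\colon\id$ is again a Quillen equivalence (see e.g.\ Hirschhorn, \emph{Model Categories and Their Localizations}, \S3.3). Here $sPSh(C)$ in either model structure is left proper and combinatorial because $\sSet$ is, and both properties pass to every left Bousfield localization, so the hypotheses persist through each localization below. By Definition \ref{def:presheaves}, $(C,S)^\wedge$ is the localization of $sPSh(C)$ at $\{\y(s)\mid s\in S_1\}$, and the representable domain and codomain of each $\y(s)$ are projectively cofibrant, so applying the fact to (1) yields (2). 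By Definition \ref{def:sheaves}, the sheaf category is the further localization of $(C,S)^\wedge$ at the set $\L$ of hypercover maps $\y(\F_\bullet)\to\y(x)$; cofibrant objects are unchanged by left Bousfield localization, so each $\y(x)$ remains cofibrant, and each $\y(\F_\bullet)$ — whose $n$-simplices $\bigsqcup_{U\in\F_n}\y(U)$ form a coproduct of representables — may be replaced by a projectively cofibrant model, for instance the homotopy colimit of the levelwise-cofibrant simplicial diagram, without altering the localization, since a left Bousfield localization depends only on the weak-equivalence classes of the inverted morphisms. A second application of the fact then gives (3).

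The only step requiring genuine care is this cofibrancy bookkeeping for the localizing sets: the functoriality statement for Bousfield localizations applies cleanly when the morphisms being inverted have cofibrant domains and codomains and correspond to one another under the left Quillen functor, and the argument above is arranged precisely so that both conditions hold — the first by passing to cofibrant models, the second automatically since the left Quillen functor is the identity. Everything else is formal. The same argument applies verbatim to any intermediate model structure in the sense of Jardine, each of which shares the objectwise weak equivalences with the injective and projective structures and is again left proper and combinatorial, which justifies the footnote.
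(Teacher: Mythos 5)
Your proposal is correct and follows essentially the same route as the paper: part (1) by observing that projective cofibrations are objectwise cofibrations while the weak equivalences coincide, and parts (2) and (3) by Hirschhorn's functoriality of left Bousfield localizations along Quillen equivalences. Your extra bookkeeping about cofibrant replacement of the localizing sets is the same device the paper uses (localizing at a cofibrantly replaced set gives a Quillen equivalent localization), just spelled out in more detail.
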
 
	\begin{proof}
		(1) is a standard result about model structures on presheaf categories (see, e.g., Lurie \cite[Lemma A.2.8.3]{lurie2009higher}). All projective cofibrations are objectwise (injective) cofibrations and hence the identity functor is a left Quillen functor. Since it also reflects equivalences it is a Quillen equivalence. 
		\par (2) follows from the following standard result about the functoriality of Bousfield localization (see, e.g., Hirschhorn \cite[Theorem 3.3.20]{hirschhorn2003model}). Let \(M\overset{F}{\underset{U}{\rightleftarrows}} N\) be a Quillen equivalence. Assume that \(S\) is a class of morphisms in $M$ such that the left Bousfield localization \(L_S M\) exists. Then there is an induced Quillen equivalence:
		\[
		L_S M\overset{L_S F}{\underset{L_{\LL F S} U}{\rightleftarrows}} L_{\LL F S} N.
		\]
		Here \(\LL F\) denotes the composition of the cofibrant replacement functor and \(F.\) Since the identity functor preserves the image of the Yoneda embedding and the localization with respect to cofibrantly replaced set of morphisms is Quillen equivalent to the original localization, we get the result.
		\par (3) also follows from the above theorem of Hirschhorn. Indeed, \(\L\) is mapped to \(\L\) by the identity functor and hence the localizations are equivalent. 
	\end{proof}
	\begin{definition}
		A morphism of relative categories \(f\) between relative sites \((C, S)\)~and~\((D, L)\) is called {\it a morphism of relative sites} if it satisfies the {\it homotopical lifting property for covering families}. That is, for any covering family
		\(
		\{\beta_i:V_i\to f(X)\} 
		\) in \(L^{-1}D\)
		there exists a covering family \(\{\alpha_j:U_j\to X\}\) in \(S^{-1}C\) such that each \(f(\alpha_j)\) factors through some~\(\beta_i\) in~\(L^{-1}D.\)
	\end{definition}
	\begin{proposition}\label{pro:quil_adj}
		Let \(f:(C, S)\to (D, L)\) be a morphism of relative sites. The standard push forward--pull back adjunction \((f_!,f^*)\) induces a Quillen adjunction between sheaf categories 
		\[
		f_!:\Sh(C,S)\rightleftarrows \Sh(D,L):f^*.
		\]
	\end{proposition}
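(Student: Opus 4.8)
The plan is to produce the adjunction in three stages, matching the construction of $\Sh(C,S)$ as an iterated left Bousfield localization of $\PSh(C)$ (Definitions~\ref{def:presheaves} and~\ref{def:sheaves}), and to transport the Quillen adjunction through each localization using the functoriality of left Bousfield localization recalled in the proof of Lemma~\ref{pro:inj_proj_equiv}.

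At the presheaf level, let $f^*\colon\PSh(D)\to\PSh(C)$ be precomposition with $f$ and $f_!=\operatorname{Lan}_f$ its left adjoint. Projective fibrations and trivial fibrations are detected objectwise and $f^*$ is computed objectwise, so $f^*$ preserves both of them (indeed all objectwise weak equivalences); hence $(f_!,f^*)$ is already a Quillen adjunction $\PSh(C)_{proj}\rightleftarrows\PSh(D)_{proj}$. The passage to prestacks rests on the single identity $f_!\,\y_C(X)\simeq\y_D(f(X))$, natural in $X$: the left Kan extension of the Yoneda embedding along $f$ is $\y_D\circ f$. Since representables are projectively cofibrant, $\LL f_!$ agrees with $f_!$ on them, so it sends each generating map $\y_C(s)$, $s\in S_1$, to $\y_D(f(s))$; and $f(s)\in L_1$ because $f$ is a morphism of relative categories, so $\y_D(f(s))$ is among the maps inverted in $(D,L)^\wedge$. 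Functoriality of localization then gives a Quillen adjunction $(C,S)^\wedge\rightleftarrows(D,L)^\wedge$.

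The substantive step is the last localization, to sheaves. By the same functoriality it suffices to check that $\LL f_!$ sends every generating map $\y_C(\F_\bullet)\to\y_C(x)$, with $\F_\bullet$ a homotopical hypercover of $x\in C_f$, to a weak equivalence in $\Sh(D,L)$; equivalently, that $f^*$ carries stacks on $(D,L)$ to stacks on $(C,S)$. As $\y_C(\F_\bullet)$ is built degreewise from coproducts of representables it is projectively cofibrant (or replace it cofibrantly), so $\LL f_!\,\y_C(\F_\bullet)\simeq f_!\,\y_C(\F_\bullet)$, the latter being the simplicial presheaf with $n$-term $\bigsqcup_{U\in\F_n}\y_D(f(U))$. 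The task thus becomes to show that for every stack $\G$ on $(D,L)$ the canonical map $\G(f(x))\to\operatorname{holim}_{[n]\in\Delta}\prod_{U\in\F_n}\G(f(U))$ is a weak equivalence. This is precisely where the homotopical lifting property for covering families is used, exactly as in the classical fact that precomposition with a cocontinuous functor of sites preserves sheaves: the lifting property lets one refine, through $f$, any covering family of $f(x)$ by the $f$-image of a covering family of $x$, and, applied to the successive stages of $\F_\bullet$, it produces a homotopical hypercover $\mathcal V_\bullet\to f(x)$ in $(D,L)$ through which the family $[n]\mapsto\{f(U)\to f(x)\mid U\in\F_n\}$ factors with matching homotopy limits; descent for $\G$ along $\mathcal V_\bullet$ then closes the argument.

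I expect this last step to be the main obstacle. The first two stages are controlled entirely by the identity $f_!\,\y_C\simeq\y_D f$, whereas here one must genuinely transport hyperdescent data across $f$ with nothing available but the homotopical lifting property, and verify that the two cosimplicial diagrams indexing the descent conditions agree up to the homotopy equivalences provided by the hypercompleteness built into Definition~\ref{def:sheaves}. Once $f^*$ is known to preserve stacks, functoriality of left Bousfield localization upgrades the prestack-level adjunction to the asserted Quillen adjunction $f_!\colon\Sh(C,S)\rightleftarrows\Sh(D,L)\colon f^*$, completing the argument.
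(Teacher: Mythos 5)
Your three-stage decomposition (presheaves, then prestacks, then sheaves, transporting the adjunction through each left Bousfield localization by Hirschhorn's functoriality theorem) is exactly the paper's argument, and your first two stages correctly supply the details the paper leaves implicit: $f^*$ is computed objectwise, hence is right Quillen for the projective structures, and $\LL f_!$ sends each $\y(s)$ to $\y(f(s))$ with $f(s)\in L_1$ because $f$ is a morphism of relative categories.

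The gap is in the third stage, which you rightly isolate as the substantive one. To descend the adjunction you must show that $\LL f_!$ takes each $\y(\F_\bullet)\to\y(x)$ to a weak equivalence of $\Sh(D,L)$, equivalently that the levelwise families $\{f(U)\to f(x)\mid U\in\F_n\}$ generate covering sieves of $f(x)$, i.e.\ are \emph{refined by} covering families of $f(x)$. The homotopical lifting property runs the other way: its input is a covering family of $f(x)$ in $L^{-1}D$ and its output is a covering family of $x$ in $S^{-1}C$ whose $f$-image refines the given one. It manufactures no covering families of $f(x)$ at all, so it cannot be ``applied to the successive stages of $\F_\bullet$'' (those are covers in $C$, not covers of $f(x)$ in $D$), and the hypercover $\mathcal{V}_\bullet$ you posit is not produced by it. Moreover, even granting such a $\mathcal{V}_\bullet$, the factorization you describe makes $f(\F_\bullet)$ a refinement of $\mathcal{V}_\bullet$, and a family that merely refines a cover need not support descent --- a single proper open $U\subsetneq X$ refines the identity cover of $X$. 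So the last step does not close as written. To be fair, the paper's own proof asserts this step in a single clause with no argument, and everywhere the proposition is actually used (Theorem \ref{thm:eq_sh_top} and its consequences) the functor is assumed to preserve and reflect covering families, which is the hypothesis that genuinely makes the descent transfer work. If you strengthen the hypothesis to ``images of covering families generate covering sieves of the image'' (homotopical continuity), your outline completes via the standard refinement-of-hypercovers argument of Dugger--Hollander--Isaksen; with only the stated lifting property it does not.
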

	\begin{proof}
		We have such an adjunction at the level of the categories of simplicial presheaves. Since \(f\) is a morphism of relative sites, the Quillen adjunction on presheaf categories preserves collections of morphisms defining sheaf categories as the left Bousfield localizations. Thus functoriality of the left Bousfield localization (see, e.g., Hirschhorn \cite[Theorem 3.3.20]{hirschhorn2003model}) implies that the Quillen adjunction on presheaf categories descends to the Quillen adjunction between sheaf categories. 
	\end{proof}
	We have the following key comparison result for morphisms of relative sites. 
	\begin{theorem}\label{thm:eq_sh_top}
		Let \(f:(C,S)\to (D,L)\) be a morphism of relative sites such that
		\begin{enumerate}
			\item \(f\) is a Dwyer--Kan equivalence (see Dwyer--Kan \cite{dwyer1980simplicial}) of relative categories.
			\item \(f\) reflects homotopical coverages, i.e., \(\{U_i\to X\}\) is a covering family in \(S^{-1}C\) iff \(\Ho(f)(\{U_i\to X\})\) is a covering family in \(L^{-1}D.\)
		\end{enumerate}
		Then \((f_!,f^*)\) is a Quillen equivalence between \(\Sh(C,S)\) and \(\Sh(D,L).\)
	\end{theorem}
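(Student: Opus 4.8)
The plan is to upgrade the Quillen adjunction of Proposition \ref{pro:quil_adj} to a Quillen equivalence in two stages: first at the level of prestacks, showing that $(f_!,f^*)$ is a Quillen equivalence $(C,S)^\wedge\rightleftarrows(D,L)^\wedge$, and then descending to the hypercover-local model structures of Definition \ref{def:sheaves} via functoriality of left Bousfield localization. Throughout we may fix, say, the projective flavour of the presheaf model structure, since by Lemma \ref{pro:inj_proj_equiv} none of the categories in play depend on this choice up to Quillen equivalence.

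\emph{First stage.} Recall that $(C,S)^\wedge$ is the left Bousfield localization of $\PSh(C)=\sSet^{C^\op}$ at $\{\y(s):s\in S\}$. By the standard comparison between such localized simplicial presheaf categories and presheaves of spaces on simplicial (equivalently, $\infty$-) localizations --- going back to Dwyer--Kan \cite{dwyer1980simplicial} and available in the present language in \cite[\S 4.2]{toen2005homotopical} and \cite[\S 4]{dugger2004hypercovers} --- this model category presents the $\infty$-category of presheaves of spaces on the Dwyer--Kan localization $C[S^{-1}]$, and similarly for $D$. Since $f$ is a functor of relative categories it carries $S$ into (the saturation of) $L$; combined with the identity $f_!\y(c)=\y(f(c))$ this shows that $f_!$ sends the localizing set of $(C,S)^\wedge$ to weak equivalences of $(D,L)^\wedge$, so $(f_!,f^*)$ descends to a Quillen adjunction $(C,S)^\wedge\rightleftarrows(D,L)^\wedge$. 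Under the comparison just recalled, this is the adjunction of left Kan extension and restriction along the induced functor of $\infty$-categories $C[S^{-1}]\to D[L^{-1}]$. Hypothesis (1) says exactly that this functor is an equivalence of $\infty$-categories, and presheaf $\infty$-categories are invariant under equivalences of the source; hence $(f_!,f^*)$ is a Quillen equivalence between the prestack categories.

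\emph{Second stage.} Write $\L_C$ and $\L_D$ for the classes of hypercover maps of Definition \ref{def:sheaves}, so that $\Sh(C,S)=L_{\L_C}(C,S)^\wedge$ and $\Sh(D,L)=L_{\L_D}(D,L)^\wedge$. By functoriality of left Bousfield localization along Quillen equivalences (Hirschhorn \cite[Theorem 3.3.20]{hirschhorn2003model}, already used above), the first-stage equivalence induces a Quillen equivalence $L_{\L_C}(C,S)^\wedge\rightleftarrows L_{\LL f_!(\L_C)}(D,L)^\wedge$, where $\LL f_!$ is $f_!$ composed with cofibrant replacement. It therefore remains to show $L_{\LL f_!(\L_C)}(D,L)^\wedge=L_{\L_D}(D,L)^\wedge$, i.e.\ that a $(D,L)^\wedge$-fibrant object is local for $\LL f_!(\L_C)$ if and only if it is local for $\L_D$. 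Because $f_!$ preserves colimits and representables, it sends a homotopical hypercover $\F_\bullet\to\y(x)$ to the map with $n$-th term $\bigsqcup_{U\in\F_n}\y(f(U))\to\y(f(x))$; by the ``only if'' direction of hypothesis (2), applied levelwise, this is again a homotopical hypercover, now over $f(x)$. Hence $\LL f_!(\L_C)$ consists of $\L_D$-local equivalences, so every $\L_D$-local object is $\LL f_!(\L_C)$-local. Conversely, let $\G_\bullet\to\y(y)$ be a homotopical hypercover over an object $y$ of $D$. Essential surjectivity up to homotopy (part of hypothesis (1)) supplies $x\in C$ with a weak equivalence $\y(f(x))\overset{\sim}{\to}\y(y)$ in $(D,L)^\wedge$; the homotopical lifting property enjoyed by the morphism of relative sites $f$, together with the ``if'' direction of (2), then yields a homotopical hypercover over $x$ whose $f$-image refines $\G_\bullet$. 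Since refining a hypercover does not change the associated localization (see \cite[\S 7]{dugger2004hypercovers}), an object is $\L_D$-local as soon as it is $\LL f_!(\L_C)$-local. This proves the equality of localizations, and with it the theorem.

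The step I expect to be the main obstacle is the last one: matching the hypercover class of $D$ with the $f$-image of that of $C$ requires carefully tracking the $\infty$-categorical meaning of ``image of a hypercover'' and of ``refinement'', and exactly how the homotopical lifting property (which refines a cover of $f(x)$ by the image of a cover of $x$) dovetails with essential surjectivity (which provides, up to equivalence, such an $x$ for every object $y$ of $D$). The invariance of presheaf $\infty$-categories under Dwyer--Kan equivalences invoked in the first stage is standard but should be cited with care.
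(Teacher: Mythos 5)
Your proposal follows essentially the same route as the paper: the Dwyer--Kan equivalence hypothesis gives a Quillen equivalence of prestack categories (the paper cites \cite[Corollary 2.3.6]{toen2005homotopical} directly for this, which is the same fact as your invariance of presheaf categories under Dwyer--Kan equivalences), and then Hirschhorn's functoriality of left Bousfield localization \cite[Theorem 3.3.20]{hirschhorn2003model} descends it to the hypercover-local structures. The only difference is that you spell out why $\LL f_!(\L_C)$ and $\L_D$ generate the same localization, a step the paper compresses into the single assertion that $f_!$ reflects hypercovers; your expanded two-directional argument (levelwise image of hypercovers, plus refinement via essential surjectivity and the lifting property) is a correct elaboration of exactly that point.
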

	\begin{proof}
		By T\"{o}en--Vezzosi \cite[Corollary 2.3.6]{toen2005homotopical} (original ideas are due to Dwyer--Kan \cite{dwyer1987equivalences}) the first condition implies that there is a Quillen equivalence between the prestack categories 
		\[
		f_!:(C,S)^\wedge\rightleftarrows (D,L)^\wedge:f^*.
		\]
		By the second assumption \(f_!\) reflects the class of hypercovers. Hence we can apply Hirschhorn's theorem on the equivalence of Bousfield localizations \cite[Theorem 3.3.20]{hirschhorn2003model} to left localizations of prestack categories and obtain a Quillen equivalence:
		\[
		f_!:\Sh(C,S)\rightleftarrows \Sh(D,L):f^*.\qedhere
		\]
	\end{proof}
	\begin{definition}\label{def:eq_rel_sites}
		A morphism of relative sites satisfying conditions of Theorem~\ref{thm:eq_sh_top} is referred to as an {\it equivalence of relative sites}. 
	\end{definition}
	\subsection{Geometry over Fermat theories}
	\subsubsection{\(\Tc\)-algebras}\label{sec:TAlg}
	\begin{definition}\label{def:RCom}
		For a commutative ring \(R\) define a Lawvere theory \(\Com_R\) by specifying
		\[
		\Com_R(R^m;R^n)=\{ \text{the set of } R\text{-polynomial maps } \AA^m_R\to \AA^n_R\}.
		\]
		Clearly, this is just the theory of commutative \(R\)-algebras.
	\end{definition}
	\begin{definition}[{\cite[\S 1]{dubuc19841}}]\label{def:fermat}
		Consider a Lawvere theory \(\Tc\) together with a morphism of Lawvere theories
		\[
		\iota:\Com_\Z\to \Tc.
		\]
		It is a {\it Fermat theory} if additionally for arbitrary objects \(S,T\in \Tc\) and any morphism 
		\[
		f:\iota(\AA^1_\Z)\times S\to T
		\]
		there exists \(g:\iota(\AA^1_\Z)\times \iota(\AA^1_\Z)\times S\to T\) such that 
		\[
		f(l_1,s)-f(l_2,s)=(l_1-l_2)\cdot g(l_1,l_2,s).
		\]
		Here \(l_1,l_2:\iota\AA^2_\Z\to\iota\AA^1_\Z\) are the images of two projections and the ring operations are the images of corresponding operations in \(\Com_\Z.\) The latter property is often called \emph{Hadamard's lemma} in reference to the result establishing this property for smooth functions on $\R^n.$
	\end{definition}
	\begin{definition}\label{def:Fermat_over_R}
		We say that \(\Tc\) is a Fermat theory over a ring \(R\) if the morphism \(\iota\) from Definition \ref{def:fermat} factors as 
		\[\begin{tikzcd}
			{\Com_\Z} && \Tc \\
			& {\Com_R}
			\arrow["\iota", from=1-1, to=1-3]
			\arrow["{(\Z\to R)_*}"', from=1-1, to=2-2]
			\arrow["{\ol{\iota}}"', from=2-2, to=1-3]
		\end{tikzcd}\]
		Here the left morphism is induced by the canonical ring homomorphism \[
		\Z\to R,\quad 1_\Z\mapsto 1_R.
		\]
	\end{definition}
	\begin{definition}
		Let \(\Tc\) be a Fermat theory.
		A \(\Tc\)-algebra is a finite product preserving functor $\Tc\to\Set.$ 
	\end{definition}
	\begin{definition}
		A homomorphism of \(\Tc\)-algebras (or simply \(\Tc\)-homomorphism) is a natural transformation of such functors.
	\end{definition}
	
	\begin{construction}[{see \cite[\S 2.2.1]{carchedi2012theories}}]\label{con:completion}
		A morphism of Fermat theories \(\Tc\to \Tc'\) induces an adjoint pair of functors:
		\[\begin{tikzcd}
			{\Tc'\Alg} && \Tc\Alg
			\arrow[""{name=0, anchor=center, inner sep=0}, "{\wh{(-)}}", shift left=3, from=1-1, to=1-3]
			\arrow[""{name=1, anchor=center, inner sep=0}, "{(-)_\#}", shift left=3, from=1-3, to=1-1]
			\arrow["\dashv"{anchor=center, rotate=-90}, draw=none, from=0, to=1]
		\end{tikzcd}\]
		The left adjoint is referred to as {\it \(\Tc\)-completion} and the right one is the forgetful functor.
	\end{construction}
	In particular any commutative algebra over \(K\) admits a completion to an algebra over a Fermat theory \(\Tc\) over \(K.\)
	\begin{notation}
		One can recover a \(\Tc\)-algebra \(A\) from the set \(A(1)\) and the action of \(A(n)\) on this set. Thus in the sequel we will denote the set \(A(1)\) simply by \(A.\) 
	\end{notation}
	There is a forgetful functor from \(\Tc\)-algebras to ordinary algebras. Thus any \(\Tc\)-algebra is an ordinary algebra.
	\begin{proposition}[{\cite[Proposition 1.2]{dubuc19841}}]
		For any ring-theoretic ideal \(I\) in a \(\Tc\)-algebra~\(A\) there is a natural structure of a \(\Tc\)-algebra on \(A/I\) that makes \(A\to A/I\) a \(\Tc\)-homomorphism.
	\end{proposition}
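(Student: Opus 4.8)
The plan is to show that every operation of the theory $\Tc$ descends to the set-theoretic quotient $A/I$, the one non-formal ingredient being the Fermat property, which controls how an operation changes when one of its arguments is perturbed by an element of $I$. Throughout I use the convention recorded above that a $\Tc$-algebra $A$ is determined by the set $A(1)$ together with the operations $\phi_A\colon A(1)^m\to A(1)$ attached to the morphisms $\phi\colon \AA^m_\Z\to\AA^1_\Z$ of $\Tc$, and that the underlying commutative ring structure of $A$ is the image under $\iota$ of the ring operations of $\Com_\Z$. Since $A\to A/I$ is surjective on underlying sets, at most one $\Tc$-algebra structure on $A/I$ can make it a $\Tc$-homomorphism, so ``natural'' here just means that we must exhibit this (necessarily unique) structure.

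First I would define $\bar\phi\colon (A/I)^m\to A/I$ for each $\phi\colon\AA^m_\Z\to\AA^1_\Z$ by $\bar\phi([a_1],\dots,[a_m])=[\phi_A(a_1,\dots,a_m)]$ and check well-definedness. A telescoping argument over the arguments reduces this to the claim that, with all but one argument held fixed, replacing that argument by a congruent one changes the value of $\phi_A$ only by an element of $I$. To see this, reindex (using the projection morphisms of the Lawvere theory $\Tc$) so that the varying argument is the first and regard $\phi$ as a morphism $\iota(\AA^1_\Z)\times S\to\iota(\AA^1_\Z)$ with $S=\iota(\AA^{m-1}_\Z)$; the Fermat property (Definition \ref{def:fermat}) yields $g\colon\iota(\AA^1_\Z)\times\iota(\AA^1_\Z)\times S\to\iota(\AA^1_\Z)$ with $\phi(l_1,s)-\phi(l_2,s)=(l_1-l_2)\cdot g(l_1,l_2,s)$ as morphisms of $\Tc$. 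Applying the product-preserving functor $A$ to this identity and evaluating at $l_1=a$, $l_2=a'$ and a fixed tuple $s$ gives $\phi_A(a,s)-\phi_A(a',s)=(a-a')\cdot g_A(a,a',s)$, where now $-$ and $\cdot$ are the ring operations on $A$; if $a-a'\in I$, the right-hand side lies in $I$ because $I$ is a ring-theoretic ideal. This — together with the point that the multiplication occurring in Hadamard's identity is itself one of the operations of $\Tc$, so that the defining closure property of $I$ does apply — is the heart of the matter; the rest is bookkeeping.

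Finally I would assemble the $\bar\phi$ into a $\Tc$-algebra. Because $\Tc$ has finite products with $\AA^n_\Z=(\AA^1_\Z)^n$, a morphism $\AA^m_\Z\to\AA^n_\Z$ is an $n$-tuple of morphisms $\AA^m_\Z\to\AA^1_\Z$, so it suffices that the $\bar\phi$ be defined for $\phi$ landing in $\AA^1_\Z$, which we have; the quotient map $q\colon A(1)\to A(1)/I$ then intertwines $\phi_A$ with $\bar\phi$ for every $\phi$, and since $q$ is surjective every equation among the $\phi_A$ forced by composition in $\Tc$ pushes forward to the same equation among the $\bar\phi$. Hence $\bar{(-)}$ defines a finite-product-preserving functor $\Tc\to\Set$, i.e., a $\Tc$-algebra $A/I$, whose underlying ring is the ordinary ring quotient (as $\overline{\iota(+)},\overline{\iota(\cdot)},\dots$ are the quotient-ring operations), and for which $q$ is tautologically a $\Tc$-homomorphism. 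Uniqueness of this structure then yields naturality: for any $\Tc$-homomorphism $f\colon A\to B$ with $f(I)\subseteq J$, the induced ring map $A/I\to B/J$ is a $\Tc$-homomorphism, since precomposing it with the surjection $A\to A/I$ gives $A\xrightarrow{f}B\to B/J$, which is one. I do not anticipate obstacles beyond the Fermat-property step already isolated.
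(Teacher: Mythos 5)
The paper offers no proof of this statement---it is quoted from Dubuc--Kock \cite[Proposition 1.2]{dubuc19841}, whose argument is precisely the one you give: the Fermat (Hadamard) identity $\phi(l_1,s)-\phi(l_2,s)=(l_1-l_2)\cdot g(l_1,l_2,s)$ shows that perturbing one argument of an operation by an element of the ring-theoretic ideal $I$ changes the output by an element of $I$, so every operation descends to $A/I$, and surjectivity of $A\to A/I$ forces functoriality and uniqueness. Your proof is correct and takes essentially this same standard approach.
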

	\begin{definition}\label{def:fg_algebra}
		A \(\Tc\)-algebra is {\it finitely generated} if it is a quotient of the algebra \(y\AA^n_\Tc\) for some \(n\ge 0.\)
	\end{definition}
	\begin{definition}
		For a cardinal \(\lambda\) a \(\Tc\)-algebra is called {\it \(\lambda\)-generated} if it is representable as a filtered colimit over \(\mc{I}\) of finitely generated \(\Tc\)-algebras such that \(|\mc{I}|\le \lambda.\)
	\end{definition}
	\begin{definition}\label{def:res_TAlg}
		Denote by \(\Tc\Alg_r\) the category of \(\max(\aleph_1,\sup_n(|\Tc(n,1)|)\)-generated \(\Tc\)-algebras. We will also assume that there is a coverage (see Definition \ref{def:coverage}) on the category \(\Tc\Alg^{\op}.\) This coverage will be implicit throughout, since all constructions do not depend on a particular choice of such a coverage. For examples of coverages on the opposite category of algebras over Fermat theory see \S\ref{sec:Fermat_examples}.
	\end{definition}
	\begin{remark}
		The peculiar choice of the category \(\Tc\Alg_r\) is justified by our construction of the left adjoint functor \(\Qc\) in the Dold--Kan correspondence (see \S\ref{sec:DK_Fermat}). Morally, we just want the category \(\Tc\Alg_r\) to be small so that constructions with sheaf topoi work as expected.  
	\end{remark}
	\begin{lemma}\label{lem:free_for_fermat}
		The forgetful functor \(\UU:\Tc\Alg\to \Mod_R\) for a Fermat theory \(\Tc\) over \(R\)
		admits a left adjoint {\it free} functor \(\FF.\) Moreover, for a free \(R\)-module \(V\) we have:
		\[
		\FF(V)=\colim_{\substack{\text{free } W\sse V,\\ \rank W<\aleph_0}}\left( \FF(W)=y\AA^{\rank W}_\Tc\right).
		\]
	\end{lemma}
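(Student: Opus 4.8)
The plan is to obtain the existence of $\FF$ from a soft argument and then to read off the explicit formula from a chain of adjunctions together with a cofinality observation. First I would record that $\Tc\Alg$, being the category of finite-product-preserving functors $\Tc\to\Set$, is locally finitely presentable, and that $\UU$ factors as the forgetful functor $\ol\iota^{*}\colon\Tc\Alg\to\Com_R\Alg$ induced by $\ol\iota\colon\Com_R\to\Tc$ (Definition~\ref{def:Fermat_over_R}), followed by the underlying-module functor $\Com_R\Alg\to\Mod_R$. Both factors preserve all limits and all filtered colimits, since in each of $\Tc\Alg$, $\Com_R\Alg$, $\Mod_R$ limits and filtered colimits are computed on underlying sets; hence so does $\UU$. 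A limit-preserving, finitary (in particular accessible) functor between locally presentable categories is a right adjoint, which yields $\FF\dashv\UU$. One can alternatively build $\FF(M)$ by hand as a coequalizer of the two evident maps from the free $\Tc$-algebra on the free $R$-module on the underlying set of $M$, but the adjoint functor theorem is quicker.

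Next I would compute $\FF$ on free modules. For a set $S$ let $R^{(S)}=\bigoplus_{s\in S}R$ denote the free $R$-module on $S$. Then for every $A\in\Tc\Alg$ there are natural bijections
\[
\Hom_{\Tc\Alg}\bigl(\FF(R^{(S)}),A\bigr)\;\cong\;\Hom_{\Mod_R}\bigl(R^{(S)},\UU A\bigr)\;\cong\;\Hom_{\Set}(S,A),
\]
using $\FF\dashv\UU$ and the free--forgetful adjunction between $\Mod_R$ and $\Set$ (recall that $A$ also denotes the underlying set $A(\AA^{1})$). Thus $\FF(R^{(S)})$ is the free $\Tc$-algebra on the set $S$; for $S$ of finite cardinality $n$ this is $y\AA^{n}_\Tc$, since by Yoneda $\Hom_{\Tc\Alg}(y\AA^{n}_\Tc,A)=A(\AA^{n})=A^{n}$.

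For the colimit formula, fix a free $R$-module $V$ with basis $B$, so that $V=R^{(B)}$. In $\Mod_R$ one has the filtered presentation $V=\colim_{B'}R^{(B')}$ over the finite subsets $B'\subseteq B$ along the inclusions, and since $\FF$ is a left adjoint it carries this colimit to $\FF(V)=\colim_{B'}y\AA^{|B'|}_\Tc$. I would then check that this agrees with the colimit in the statement, indexed by the poset of finite-rank free submodules $W\subseteq V$ under inclusion: each $R^{(B')}$ with $B'$ finite is such a $W$, and conversely any finite-rank free $W\subseteq V$ lies inside $R^{(B')}$ for $B'$ the finite union of the $B$-supports of a basis of $W$. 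Hence the subposet of the $R^{(B')}$'s is cofinal, the two colimits coincide, and along the way this shows the indexing poset is filtered (if $W_{i}\subseteq R^{(B'_{i})}$ then $R^{(B'_{1}\cup B'_{2})}$ dominates both $W_{1}$ and $W_{2}$) and that $\FF(W)=y\AA^{\rank W}_\Tc$ because $W\cong R^{\rank W}$.

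The one genuinely non-formal point is this last cofinality step over an arbitrary ring $R$: one cannot argue by replacing $W_{1},W_{2}$ with $W_{1}+W_{2}$, since a finitely generated submodule of a free module need not be free, so the argument has to be routed through submodules spanned by finite subsets of a fixed basis. Over a field $K$---the case used throughout the rest of the paper---this difficulty evaporates and every submodule is automatically free. The remaining verifications (naturality of the displayed bijections, and that $V=\colim_{B'}R^{(B')}$ is genuinely a colimit in $\Mod_R$ rather than merely in $\Set$) are routine.
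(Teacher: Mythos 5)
Your proposal is correct, and it reaches the conclusion by a genuinely different route from the paper. The paper's proof is a two-line bootstrap: it takes the statement as known for commutative $R$-algebras (where $\FF$ is the symmetric algebra functor $\SS$) and transports it along the $\Tc$-completion functor $\wh{(-)}$ of Construction~\ref{con:completion}, which is a left adjoint and hence preserves the filtered colimit; in particular the paper implicitly exhibits $\FF$ as the composite $\wh{(-)}\circ\SS$, so that the free $\Tc$-algebra on a module is the $\Tc$-completion of its symmetric algebra. You instead work directly in $\Tc\Alg$: existence of $\FF$ via local presentability and the adjoint functor theorem, identification $\FF(R^{(S)})\cong y\AA^{|S|}_\Tc$ by comparing represented functors, and then the colimit formula from $V=\colim_{B'}R^{(B')}$ plus a finality argument. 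Both hinge on ``left adjoints preserve colimits,'' but your version is self-contained (it does not presuppose the commutative case) and, more substantively, it makes explicit the one point the paper's proof leaves buried in ``true for commutative algebras'': over a general commutative ring $R$ the poset of finite-rank free submodules of $V$ has to be compared with the cofinal subposet of submodules spanned by finite subsets of a fixed basis, since finitely generated submodules of a free module need not be free and one cannot simply pass to $W_1+W_2$. Your handling of that step (support of a basis of $W$, connectedness of the relevant comma categories) is exactly what is needed, and it is the part of the argument that actually carries content; the rest is formal on either route.
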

	\begin{proof}
		The result follows from the fact that this statement is true for commutative algebras and the completion functor (Construction \ref{con:completion}) is left adjoint, hence it preserves colimits.
	\end{proof}
	\subsubsection{Examples}\label{sec:Fermat_examples}
	There is a vast array of different examples and applications of our result. We list only the three most fundamental ones, other examples can be found in Carchedi--Roytenberg \cite[\S 2.2.3]{carchedi2012theories}. The key reason why Fermat theories are so useful is that they allow a uniform construction of complicated geometric objects (e.g., higher sheaves) from "affine models", for more detail see Carchedi--Roytenberg \cite[Introduction]{carchedi2012theories}. 
	\begin{example}\label{ex:ag}
		The simplest example of a Fermat theory that we already encountered is that of \(\Com_R\) (see Definition \ref{def:RCom}). This is the theory of commutative \(R\)-algebras for a commutative ring \(R.\) In particular, the theory \(\Com_\Z\) plays the r\^{o}le of the initial Fermat theory. Geometry over \(\Com_\Z\) reproduces the standard scheme-theoretic algebraic geometry.
	\end{example}
	\begin{example}\label{ex:cinf}
		Another very important example is provided by the Fermat theory \(\Cc^\infty\) defined via the following formula.
		\[
		\Cc^\infty(m,n)=\Cc^\infty(\R^m,\R^n).
		\]
		The geometry over this theory reproduces smooth (differential) geometry.
	\end{example}
	\begin{example}\label{ex:hol}
		The final theory we discuss is the theory of holomorphic functions \(\O\) defined via the following formula.
		\[
		\O(m,n)=\O(\C^m,\C^n).
		\]
		The geometry over this theory reproduces complex geometry.
	\end{example}
	Now we give two ways to define a coverage on a small category of algebras over a Fermat theory.
	\begin{construction}
		Define the set \(\Specm A\) for a \(\Tc\)-algebra \(A\) as
		\[
		\Specm A=\Nat(A,\AA^1_\Tc).
		\]
		The set \(\Specm A\) (called the {\it maximal spectrum} of \(A\)) can be topologized in a natural way using the {\it Zariski topology}, i.e., we declare the generating collections of open sets for this topology to be
		\[
		U_a=\{p:A\to \AA^1_\Tc\mid p(a)\neq 0\},\quad \text{for } a\in A(1).
		\]
	\end{construction}
	\begin{construction}\label{con:specm_topology}
		We now define a site structure on \(\Tc\Alg^{\op}_r.\)
		A set of morphisms \(\{f_i:B_i\to A\vert i \in I\}\) is a {\it covering family} for a \(\Tc\)-algebra \(A\) if it is dual to an open cover in Zariski topology:
		\[
		\{\Specm(B_i)\to \Specm(A)\mid i \in I\}.
		\]
	\end{construction}
	\begin{remark}
		Construction \ref{con:specm_topology} is relevant for Examples \ref{ex:cinf}, \ref{ex:hol}, and less so for Example~\ref{ex:ag}. 
	\end{remark}
	Another construction is more convenient for algebro-geometric purposes.
	\begin{definition}
		A ring-theoretic ideal \(I\triangleleft A\) in an algebra \(A\) over a Fermat theory \(\Tc\) is called {\it radical} if \(I\) coincides with its radical defined as
		\[
		\sqrt{I}:=\{a\in A\mid \exists n\in \Z_{\ge 0},\; a^n\in I\}.
		\]
	\end{definition}
	There is a Galois-type correspondence between radical ideals in the algebra \(A\) and {\it reduced} quotients of \(A\) (i.e., quotient \(\Tc\)-algebras of \(A\) without nilpotent elements).
	\begin{construction}
		Denote by \(\Zar(A)\) the poset of radical ideals in the algebra \(A.\)
	\end{construction}
	\begin{lemma}[{Tierney \cite[Proposition 3.1]{tierney1976spectrum}}]
		A poset \(\Zar(A)\) of radical ideals for an algebra \(A\) over a Fermat theory \(\Tc\) is a frame in the sense of MacLane--Moerdijk \mbox{\cite[\S IX.1]{maclane2012sheaves}}. The construction \(\Zar(A)\) can be promoted to a functor
		\[
		\Spec:=\Zar^{\op}:\Tc\Alg^{\op}\to \Loc.
		\] 
	\end{lemma}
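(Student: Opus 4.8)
The statement is, as its citation indicates, essentially classical, and the plan is to reduce it to the case of commutative rings. The key observation is that the notion of ring-theoretic ideal and that of radical make no reference to the $\Tc$-structure, and that by the Proposition of Dubuc quoted above (\cite[Proposition 1.2]{dubuc19841}) every ring-theoretic ideal $I\triangleleft A$ gives a quotient $\Tc$-algebra $A/I$; so, as a poset, $\Zar(A)$ is literally the poset of radical ideals of the underlying commutative ring of $A$. I would then proceed in three steps: (i) verify that the poset of radical ideals of a commutative ring is a frame; (ii) promote $A\mapsto\Zar(A)$ to a functor from $\Tc\Alg$ to the category of frames; (iii) pass to opposite categories to obtain $\Spec=\Zar^{\op}\colon\Tc\Alg^{\op}\to\Loc$.

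For step (i): meets in $\Zar(A)$ are intersections, which preserve radicality since $a^n\in I\cap J$ forces $a^n\in I$ and $a^n\in J$, and the top element is $A$; a poset with all meets and a top element is a complete lattice, with $\bigvee_\alpha I_\alpha=\sqrt{\sum_\alpha I_\alpha}$. The one thing left to check is the infinite distributive law $I\cap\sqrt{\sum_\alpha J_\alpha}=\sqrt{\sum_\alpha(I\cap J_\alpha)}$: the inclusion $\supseteq$ holds because the left-hand side is a radical ideal containing each $I\cap J_\alpha$, and for $\sse$ one writes, for $x\in I$ with $x^n\in\sum_\alpha J_\alpha$, an expression $x^n=j_{\alpha_1}+\dots+j_{\alpha_k}$ and then observes $x^{n+1}=\sum_i x\,j_{\alpha_i}\in\sum_\alpha(I\cap J_\alpha)$ using $x\in I$. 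Alternatively, and perhaps more cleanly, I would identify $\Zar(A)$ with the frame of open subsets of the prime spectrum of $A$ via $I\mapsto\{\mf p\mid I\sse\mf p\}$ followed by complementation, which makes the frame axioms automatic; this is the route of \cite{tierney1976spectrum}.

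For step (ii): a $\Tc$-homomorphism $\varphi\colon A\to B$ is in particular a homomorphism of the underlying rings (a $\Tc$-algebra being recovered from the set $A(1)$ and the action on it), so I would set $\Zar(\varphi)(I)=\sqrt{\varphi(I)B}$, the radical of the extended ideal. It then has to be checked that this preserves the top element (clear), arbitrary joins — which reduces to $\sqrt{\varphi(\sqrt J)B}=\sqrt{\varphi(J)B}$, true because $a^n\in J$ implies $\varphi(a)^n\in\varphi(J)B$ — and finite meets, via the elementary chain $\varphi(I)B\cap\varphi(J)B\sse\sqrt{(\varphi(I)B)(\varphi(J)B)}\sse\sqrt{\varphi(I\cap J)B}$ together with $\sqrt{K}\cap\sqrt{L}=\sqrt{K\cap L}$. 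Hence $\Zar$ is a functor with values in frames, and $\Spec:=\Zar^{\op}$ is a functor $\Tc\Alg^{\op}\to\Loc$, where $\Loc$ is the opposite of the category of frames.

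I do not expect a genuine obstacle: everything reduces to elementary commutative algebra once one knows that the $\Tc$-structure descends to quotients and is determined by the underlying ring map, both of which are recorded above. The only place demanding a little care is the bookkeeping in the distributive law and in the verification that $\Zar(\varphi)$ preserves finite meets; adopting the topological description of $\Zar(A)$ sidesteps the former entirely.
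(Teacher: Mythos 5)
Your proof is correct, but note that the paper does not actually prove this lemma at all: it is stated as a citation to Tierney's Proposition 3.1 and used as a black box, so there is no argument in the text to compare yours against. Your self-contained verification is sound and uses exactly the right reduction: since the paper defines \emph{radical} purely ring-theoretically and records that every $\Tc$-algebra has an underlying commutative ring (and every $\Tc$-homomorphism an underlying ring map), $\Zar(A)$ really is the poset of radical ideals of the underlying ring, and your checks of the complete-lattice structure, the infinite distributive law via $x^{n+1}=\sum_i x\,j_{\alpha_i}$, and the frame-homomorphism axioms for $I\mapsto\sqrt{\varphi(I)B}$ are all standard and correct. Two minor remarks: first, Dubuc's result on $\Tc$-structures on quotients $A/I$ is not actually needed for the lemma as stated (it only matters for the Galois correspondence with reduced quotients mentioned just before the lemma); second, your alternative identification of $\Zar(A)$ with the open-set frame of the prime spectrum invokes the existence of enough prime ideals (hence choice), whereas your direct verification is choice-free and arguably preferable in the localic spirit of $\Loc$. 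Either way, the argument is complete.
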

	\begin{construction}\label{con:spec_topology}
		We now define another site structure on \(\Tc\Alg^{\op}_r.\)
		A set of morphisms \(\{f_i:B_i\to A\vert i \in I\}\) is a {\it covering family} for a \(\Tc\)-algebra \(A\) if it is dual to an open cover of locales (in the sense of MacLane--Moerdijk \cite[\S IX]{maclane2012sheaves}):
		\[
		\{\Spec(B_i)\to \Spec(A)\mid i \in I\}.
		\]
	\end{construction}
	\begin{remark}
		Construction \ref{con:spec_topology} is well-adapted for the purposes of algebraic geometry (Example \ref{ex:ag}). It is, however, not very convenient for Examples \ref{ex:cinf}, \ref{ex:hol} since locales produced by the functor \(\Spec\) are always compact and thus do not model smooth or holomorphic affine spaces adequately. 
	\end{remark}
	\section{The Dold--Kan correspondence}
	\subsection{Connectivity theorem for Fermat theories}\label{sec:conn_thm}
	Assume that \(\Tc\) is a Fermat theory over some ring \(R\) (see Definition \ref{def:Fermat_over_R}).
	In this section we extend Curtis' connectivity theorem (see \cite[Theorem 3.7]{quillen1969rational}) to semifree simplicial algebras over \(\Tc.\) To this end, we utilize connectivity results developed by Quillen (\cite[Theorem 6.12]{quillen1970co}, \cite[Theorem 8.8]{quillen1968homology}) and Andr\`{e} (\cite[Proposition XIII.3]{andre2013homologie}). 
	\par Denote by \(T_0\) the \(\Tc\)-algebra of a point (i.e., the image under the Yoneda embedding of the object in \(\Tc\) indexed by \(0\)). 
	\begin{definition}
		Denote by \(\Tc\Alg\) the category of {\it all} \(\Tc\)-algebras and by \(\s\Tc\Alg\) the category of {\it all} simplicial \(\Tc\)-algebras, i.e., simplicial objects in \(\Tc\Alg.\)
	\end{definition}
	\begin{proposition}\label{prop:model_struct_on_sFermat}
		With the notation as above, there is a cofibrantly generated model structure on \(\s\Tc\Alg.\)
		Denote by \(\mc{S}^{k-1}\) denote the simplicial set \(\Delta^{k-1}/\pd\Delta^{k-1}\) and by \(\mc{D}^k\) denote the simplicial set \(\Delta^k/\Lambda^k_0.\) We have an inclusion of simplicial sets induced by the inclusion of the last face into the \(k\)-simplex: 
		\(
		\mc{S}^{k-1}\hookrightarrow\mc{D}^k.
		\)
		Denote by \(\ol{R}[-]\) the reduced simplicial chains functor \(\sSet_*\to \s\Mod_R.\) There is a model structure on \(\s\Tc\Alg\) with weak equivalences created by the forgetful functor \(\UU:\s\Tc\Alg\to \sSet_*\) and generating cofibrations given by inclusions:
		\[
		\FF(\ol{R}[\mc{S}^{k-1}])\hookrightarrow \FF(\ol{R}[\mc{D}^{k-1}]).
		\]
		Here \(\FF\) denotes the free functor of \(\Tc\) applied degree-wise.
	\end{proposition}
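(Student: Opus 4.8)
The plan is to obtain this model structure by transferring the standard cofibrantly generated model structure along a free--forgetful adjunction, with Quillen's path-object argument handling the one nontrivial condition. Recall from Lemma~\ref{lem:free_for_fermat} the free--forgetful adjunction \(\FF\colon\Mod_R\rightleftarrows\Tc\Alg\colon\UU\) for the Fermat theory \(\Tc\) over \(R\). Composing it degreewise with the reduced free-module adjunction \(\ol{R}[-]\colon\sSet_*\rightleftarrows s\Mod_R\) produces an adjunction
\[
\FF\ol{R}[-]\colon \sSet_* \;\rightleftarrows\; s\Tc\Alg \;\colon\; \UU,
\]
whose right adjoint is the composite \(s\Tc\Alg\to s\Mod_R\to\sSet_*\), namely the underlying pointed simplicial set of the underlying \(R\)-module, based at \(0\). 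Equivalently, one may first transfer along \(\ol{R}[-]\dashv(\text{underlying})\) to the projective model structure on \(s\Mod_R\simeq\Ch_{\ge 0}(R)\), whose generating cofibrations are the Eilenberg--MacLane inclusions \(\ol{R}[\mc{S}^{k-1}]\hookrightarrow\ol{R}[\mc{D}^{k}]\) and whose weak equivalences are the quasi-isomorphisms, and then transfer along \(\FF\dashv\UU\). In either framing one declares a morphism of \(s\Tc\Alg\) to be a weak equivalence, resp.\ a fibration, precisely when \(\UU\) sends it to one, and then invokes the transfer theorem for cofibrantly generated model categories (e.g.\ \cite[Theorem~11.3.2]{hirschhorn2003model}).

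The formal hypotheses of that theorem are immediate here. The category \(\Tc\Alg\) is the category of models of a Lawvere theory, hence locally presentable, so \(s\Tc\Alg=\Tc\Alg^{\Delta^{\op}}\) is locally presentable, in particular bicomplete, and the small-object argument applies to any set of maps; in particular \(\FF\ol{R}[I]\) and \(\FF\ol{R}[J]\) permit the small-object argument. By construction of the transfer the generating cofibrations are \(\FF\) applied to the generators of \(s\Mod_R\), i.e.\ exactly the stated maps \(\FF\ol{R}[\mc{S}^{k-1}]\hookrightarrow\FF\ol{R}[\mc{D}^{k}]\), and the weak equivalences are the maps inverted by \(\UU\).

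The remaining condition --- and the one I expect to be the real obstacle --- is \emph{acyclicity}: \(\UU\) must send every relative \(\FF\ol{R}[J]\)-cell complex to a weak equivalence. For this I would run the path-object argument. First, every object of \(s\Tc\Alg\) is \(\UU\)-fibrant: the underlying simplicial set of a simplicial \(\Tc\)-algebra is the underlying simplicial set of a simplicial \(R\)-module, \emph{a fortiori} of a simplicial abelian group, and simplicial groups are Kan complexes by Moore's theorem. Second, \(s\Tc\Alg\) has a functorial path object: since \(\Tc\Alg\) is complete, \(s\Tc\Alg\) is cotensored over simplicial sets, and for \(A\in s\Tc\Alg\) the cotensor \(A^{\Delta^1}\), together with the maps induced by \(\Delta^0\hookrightarrow\Delta^1\) and \(\partial\Delta^1\hookrightarrow\Delta^1\), factors the diagonal as \(A\to A^{\Delta^1}\to A\times A\); since the forgetful functors preserve limits they preserve this cotensor, so \(\UU\) carries it to the usual simplicial path object of the Kan complex \(\UU(A)\), whence the first map is a \(\UU\)-weak equivalence and the second a \(\UU\)-fibration. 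Given a functorial fibrant replacement (the identity) and functorial path objects, the path-object argument then shows that pushouts of maps in \(\FF\ol{R}[J]\), and their transfinite compositions, are \(\UU\)-weak equivalences. The transfer theorem now produces the asserted cofibrantly generated model structure.

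I expect essentially all the substance to lie in the acyclicity step, where the point is that the \(R\)-module structure underlying every \(\Tc\)-algebra is exactly what makes the path-object argument available: it renders every object \(\UU\)-fibrant and lets one construct the path object \(A^{\Delta^1}\) algebraically inside \(s\Tc\Alg\) rather than only after forgetting down to simplicial sets. Identifying the generating cofibrations with \(\FF\ol{R}[\mc{S}^{k-1}\hookrightarrow\mc{D}^{k}]\) is then merely a matter of chasing the adjunction through \(\ol{R}[-]\) and the standard generating cofibrations of \(s\Mod_R\).
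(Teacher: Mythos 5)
Your proof is correct and is precisely the standard transfer-theorem/path-object argument; the paper gives no details and simply cites Quillen's Remarks II.4.2 (which is this argument for simplicial objects in categories of universal algebras), so you have in effect supplied the proof the paper outsources to the literature. The only point worth double-checking in your write-up is that the cotensor \(A^{\Delta^1}\) is a degreewise limit and hence preserved by the limit-preserving forgetful functor, which you do note, so the acyclicity step goes through as claimed.
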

	\begin{proof}
		For DGAs this is the standard result that can be found in, e.g., Gelfand--Manin \cite[\S V.3]{gelfand2013methods}. For simplicial algebras this is also standard and follows from, e.g., Quillen~\mbox{\cite[Remarks II.4.2]{quillen2006homotopical}}.
	\end{proof}
	\begin{definition}\label{def:cell}
		Let \(\Mc\) be a cofibrantly generated model category. A {\it cellular} object in \(\Mc\) is an object that can be obtained as a transfinite composition of cobase changes for generating cofibrations.
	\end{definition}
	For convenience we denote by \(T_0\) the constant simplicial algebra corresponding to a \(\Tc\)-algebra \(T_0.\) Observe that each algebra \(\A\) in \(\s\Tc\Alg\) which is cellular with respect to the cofibrantly generated model structure on \(\s\Tc\Alg\) constructed in Proposition \ref{prop:model_struct_on_sFermat} is augmented over \(T_0.\) This augmentation is canonical with respect to the cellular structure. The construction of such augmentation is inductive: assume that we have an augmented simplicial cellular \(\Tc\)-algebra \(\B.\) Suppose a cellular algebra \(\A\) is defined via the following cobase change: 
	\[\begin{tikzcd}
		{\FF(\mc{S}^{k-1})} && {\FF(\mc{D}^k)} \\
		\\
		\B && \A.
		\arrow[hook, from=1-1, to=1-3]
		\arrow[from=1-1, to=3-1]
		\arrow[from=3-1, to=3-3]
		\arrow[from=1-3, to=3-3]
		\arrow["\lrcorner"{anchor=center, pos=0.125}, draw=none, from=3-3, to=1-1]
	\end{tikzcd}\]
	Then both \(\FF(\mc{S}^{k-1})\) and \(\FF(\mc{D}^k)\) are augmented in a way that turns the canonical map \(\FF(\mc{S}^{k-1})\to \FF(\mc{D}^k)\) into a map of augmented algebras. This augmentation annihilates all "monomials of positive lengths" in both algebras. Concretely it comes from applying the functor \(\FF\) to the zero morphisms of simplicial vector spaces. By induction, \(\B\) is augmented in a compatible way with the map \(\FF(\mc{S}^{k-1})\to \B.\) Thus we have the following analog of Definition \ref{def:aug_ideal} for arbitrary Fermat theories. 
	\begin{definition}\label{def:aug_ideal_Fermat}
		For a cellular \(\A\) in \(\s\Tc\Alg\) define the {\it augmentation ideal} \(\ol{\A}\) as the kernel of the canonical augmentation map defined above.
	\end{definition}
	\begin{theorem}\label{thm:conn_Fermat}
		Let \(\A\) be a reduced cellular simplicial algebra over \(\Tc.\) Then \(\ol{\A}^r\) is an \mbox{\((r-1)\)-connected} simplicial abelian group. 
	\end{theorem}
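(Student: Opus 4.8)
The plan is to argue by induction on the cellular structure of \(\A\), adapting Quillen's treatment of the commutative case \cite{quillen1969rational}. First I would reduce to the case of finitely many cells: homotopy groups of simplicial abelian groups and the formation of powers of ring-theoretic ideals both commute with filtered colimits, and every cellular \(\A\) is the filtered colimit of its finite cellular subcomplexes. The induction is then on the number of cells, with hypothesis ``\(\ol{\B}^m\) is \((m-1)\)-connected for all \(m\ge 1\) and all reduced cellular \(\B\) with strictly fewer cells'' (with the convention \(\ol{\B}^0:=\B\), which is connective for free). The base case \(\A=T_0\) is immediate because \(\ol{\A}=0\); note also that a reduced \(\A\) has \(\A_0=T_0\), hence \(\ol{\A}_0=0\) and \(\pi_0(\ol{\A}^r)=0\) always, so the case \(r=1\) is automatic.

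For the inductive step, present \(\A\) as a pushout \(\FF(\mc{D}^k)\amalg_{\FF(\mc{S}^{k-1})}\B\) with \(k\ge 1\); concretely \(\A\) is \(\B\) with a single new generator \(\tau\) in homotopical degree \(k\) subject to \(\pd\tau=z\) for a cycle \(z\in\ol{\B}_{k-1}\). The coprojection \(\B\to\A\) is a cofibration, so \(\ol{\B}\hookrightarrow\ol{\A}\) and \(\ol{\B}^r\hookrightarrow\ol{\A}^r\) are inclusions of simplicial \(R\)-modules, and the plan is to filter \(\ol{\A}^r\) by ``how often the new cell is used'', so that \(\ol{\B}^r\) sits at the bottom and the successive graded quotients are built from lower powers of \(\ol{\B}\) tensored with symmetric powers of the new cell.

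The structural input controlling these quotients is the Fermat (Hadamard) property of \(\Tc\): iterating \(f(l,s)-f(0,s)=l\cdot g(l,0,s)\) shows that, writing \(J\) for the ideal generated by the new cell, one has \(J^w/J^{w+1}\cong (\A/J)\otimes_R\operatorname{Sym}_R^w(Q)\) where \(Q\) is the new cell; since \(\mc{D}^k/\mc{S}^{k-1}\cong\mc{S}^k\), \(Q\simeq R[k]\) is \((k-1)\)-connected. Passing to the \(r\)-th power, the associated graded of \(\ol{\A}^r\) is assembled from \(\ol{\B}^r\) together with terms of the form \(\ol{\B}^m\otimes_R\operatorname{Sym}_R^w(Q)\) with \(m+w\ge r\). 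Now \(\operatorname{Sym}_R^w(Q)\) is the \(w\)-th derived symmetric power of a \((k-1)\)-connected module, hence \((wk-1)\)-connected over any ground ring, while \(\ol{\B}^m\) is \((m-1)\)-connected by the inductive hypothesis; by the Eilenberg--Zilber theorem \(\ol{\B}^m\otimes_R\operatorname{Sym}_R^w(Q)\) is then \((m+wk-1)\)-connected, and since \(k\ge 1\) this is \(\ge (m+w)-1\ge r-1\), with the connectivity growing without bound in \(w\). The standard long-exact-sequence-plus-colimit argument then forces \(\ol{\A}^r\) to be \((r-1)\)-connected, closing the induction.

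The step I expect to be the main obstacle is making the ``uses of the new cell'' filtration rigorous for a general, non-polynomial Fermat theory, together with its possible failure to be separated. For \(\Tc=\Com_R\) the filtration is by honest polynomial degree in \(\tau\), is separated, and the argument above is complete as stated. In general there is no such grading, so one must work with the \(J\)-adic filtration; its intersection \(\bigcap_w J^w\) can be nonzero --- for \(\Tc=\Cc^\infty\) it consists of the functions flat along the zero locus of the new cell --- and one must check that the resulting ``flat'' contribution is homotopically negligible through degree \(r-1\), so that the associated graded still computes the relevant part of \(\pi_\ast\). This is precisely where the connectivity results of Quillen \cite{quillen1970co},\,\cite{quillen1968homology} and André \cite{andre2013homologie} are used: because \(\operatorname{Sym}_R^w(Q\A)\) is \((w-1)\)-connected for connective indecomposables \(Q\A\), in each fixed simplicial degree only finitely many \(w\) contribute, the \(J\)-adic completion of \(\ol{\A}^r\) is levelwise complete by a Borel-type statement, and the discrepancy between \(\ol{\A}^r\) and its completion is acyclic in the range \(\pi_{\le r-1}\). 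Granting this, the filtration argument runs as above and proves the theorem.
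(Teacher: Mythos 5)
Your strategy---cell-by-cell induction plus a filtration of \(\ol{\A}^r\) by powers of the ideal generated by the new cell, with graded pieces controlled by Hadamard's lemma and connectivity of symmetric powers---is in effect an attempt to re-derive the connectivity theorem that the paper simply invokes. The paper's own proof is much shorter: it observes that a cellular \(\A\) is degreewise a free \(\Tc\)-algebra, proves (Lemma \ref{lem:quasireg_Fermat}) that the augmentation ideal of a free \(\Tc\)-algebra is \emph{quasiregular} in Quillen's sense---\(\ol{A}/\ol{A}^2\cong V\) is flat by the Hadamard/Taylor argument, and the Koszul resolution of \(T_0\) identifies \(\Lambda_A(\ol{A}/\ol{A}^2)\) with \(\Tor^A(T_0,T_0)\)---and then, since \(\ol{\A}_0=0\) by reducedness, applies \cite[Theorem 6.12]{quillen1970co} (detailed in \cite[Theorem 8.8]{quillen1968homology}) as a black box. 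For \(\Tc=\Com_R\) your argument is complete and correct (the filtration is an honest polynomial grading), and the algebraic input you use is the same Hadamard computation that underlies the quasiregularity lemma; so up to that point the two proofs are cousins.

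The gap is exactly where you locate it, and your proposed patch does not close it. For \(\Tc=\CI\) the claim that ``in each fixed simplicial degree only finitely many \(w\) contribute'' is false: in every simplicial degree \(n>0\) the intersection \(\bigcap_w \ol{\A}_n^{\,w}\) contains the nonzero ideal of functions flat along the zero locus of the generators, so the \(J\)-adic filtration is not levelwise separated and its associated graded does not obviously compute \(\pi_{\le r-1}(\ol{\A}^r)\); there is no elementary ``Borel-type'' completeness statement to appeal to. The assertion that ``the discrepancy between \(\ol{\A}^r\) and its completion is acyclic in the range \(\pi_{\le r-1}\)'' is precisely the content of the theorem in the non-polynomial case, and you give no argument for it---the references to Quillen and Andr\'e at this point are a placeholder for the entire proof, not a citation of a specific usable statement. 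The efficient repair is the paper's route: the separatedness/completion difficulty is already packaged inside the quasiregularity hypothesis of \cite[Theorem 6.12]{quillen1970co}, whose verification for free \(\Tc\)-algebras is a finite Koszul-complex computation; either reduce to that (making your filtration unnecessary), or actually supply the missing comparison between \(\ol{\A}^r\) and its \(J\)-adic associated graded, which is the real mathematical content and is absent from your write-up.
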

	To prove this result we want to invoke \cite[Theorem 6.12]{quillen1970co}. We start by recalling the following definition. 
	\begin{definition}[{Quillen \cite[Definition 6.10]{quillen1970co}}] Let \(A\) be a commutative ring and let \(\Lambda_A\) denote the exterior \(A\)-algebra functor on the category of \(A\)-modules. 
		An ideal \(I\) in \(A\) is called {\it quasiregular} if \(I/I^2\) is a flat \(A/I\)-module and there is a canonical isomorphism
		\begin{equation}\label{eq:quasireg_ideal}
			\Lambda_A I/I^2\xrightarrow{\cong} \Tor^A(A/I,A/I).
		\end{equation}
	\end{definition}
	We have the following key property of free algebras over a Fermat theory:
	\begin{lemma}\label{lem:quasireg_Fermat}
		Let \(A=\FF(V)\) be a free algebra over a Fermat theory \(\Tc\) induced by a free \(T_0\)-module \(V.\) Then the kernel \(\ol{A}\) of the augmentation map \[\ve=\FF(V\to 0)=A\to T_0\] is a quasiregular ideal. 
	\end{lemma}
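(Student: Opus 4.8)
The plan is to reduce the statement to the known commutative-algebra case, namely Quillen's description of the Koszul resolution of the diagonal for a polynomial ring, and then transport it across the completion adjunction of Construction~\ref{con:completion}. First I would unwind what needs to be checked: writing $A=\FF(V)$ and $T_0=A/\ol{A}$, I must show that $\ol{A}/\ol{A}{}^2$ is a flat (in fact free) $T_0$-module, and that the canonical map $\Lambda_{A}(\ol{A}/\ol{A}{}^2)\to\Tor^{A}(T_0,T_0)$ is an isomorphism. The first point is immediate from Lemma~\ref{lem:free_for_fermat}: the cotangent module of a free $\Tc$-algebra on $V$ is the free $A$-module on $V$, so $\ol{A}/\ol{A}{}^2\cong V\otimes_{T_0}T_0\cong V$ is free over $T_0$. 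The substance is therefore the $\Tor$ computation.

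For the $\Tor$ computation I would proceed in two stages. Stage one handles $V$ of finite rank $n$: here $\FF(V)=y\AA^n_\Tc$, and the claim is that the Koszul complex $\Kos(A;x_1,\dots,x_n)$ built on the generators of $\ol{A}$ is a resolution of $T_0$ as an $A$-module. Over the polynomial ring $\Com_R$ this is the classical fact that a regular sequence is Koszul-regular, i.e. the augmentation ideal of $R[x_1,\dots,x_n]$ is quasiregular (this is exactly the content of \cite[Definition 6.10]{quillen1970co} applied to polynomial rings). The key is that $y\AA^n_\Tc$, while not free as a module, is \emph{flat} over $R[x_1,\dots,x_n]$ — indeed the Fermat/Hadamard property (Definition~\ref{def:fermat}) makes $y\AA^n_\Tc$ behave like a localization-type extension, and in any case exactness of the Koszul complex can be checked after the faithfully flat base change to the completed theory, or verified directly using the Hadamard lemma to split off the relevant kernels. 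Thus $\Tor^{\Tc}$ against $T_0$ agrees with the exterior algebra on the degree-one part, giving \eqref{eq:quasireg_ideal} in the finite-rank case.

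Stage two passes to the general (infinite-rank) case by a colimit argument, which is the reason the paper has been careful about the colimit presentation of $\FF(V)$ in Lemma~\ref{lem:free_for_fermat}. Since $\FF(V)=\colim_{W}\FF(W)$ over finite-rank free submodules $W\subseteq V$, and since $\Tor$, the exterior algebra functor, and the formation of $\ol{(-)}/\ol{(-)}{}^2$ all commute with filtered colimits, the isomorphism \eqref{eq:quasireg_ideal} for each $\FF(W)$ assembles into the isomorphism for $\FF(V)$; flatness of $\ol{A}/\ol{A}{}^2$ is likewise preserved. I expect the main obstacle to be stage one: specifically, establishing that the Koszul complex on the coordinate functions is exact over $y\AA^n_\Tc$ \emph{as a $\Tc$-algebra}, rather than merely over the polynomial subring, since $y\AA^n_\Tc$ is genuinely bigger. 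This is where the Fermat condition does real work — the Hadamard decomposition $f(l_1,s)-f(l_2,s)=(l_1-l_2)g(l_1,l_2,s)$ is precisely what lets one produce the contracting homotopy on the Koszul complex, mimicking the usual proof that $(x_1,\dots,x_n)$ is a regular sequence in a polynomial ring. Once that homotopy is in hand, the rest is the standard identification of Koszul homology with $\Tor$ and with the exterior algebra.
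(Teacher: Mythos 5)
Your proposal is correct and follows essentially the same route as the paper: identify $\ol{A}/\ol{A}^2\cong V$ via Hadamard/Taylor expansion, then compute $\Tor^A(T_0,T_0)$ by exhibiting the Koszul complex on the generators as a free resolution of $T_0$ and tensoring down to kill the differential. The paper simply writes the Koszul resolution directly on an arbitrary generating set and asserts its exactness as classical, whereas you reduce to finite rank via Lemma~\ref{lem:free_for_fermat} and sketch the Hadamard contracting homotopy; these are refinements of, not departures from, the paper's argument.
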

	\begin{proof}
		The first condition is evident from the Fermat property (Definition \ref{def:Fermat_over_R}). Indeed, \(\ol{A}/\ol{A}^2\) is canonically isomorphic to \(V\) by a Taylor series argument and hence it is a flat module over \(T_0.\)
		\par Let \(V=T_0\la e_i\ra_{i\in I}.\) We denote by \(\{e_i^0\}_{i\in I}\) the generators of the algebra \(A\) corresponding to the aforementioned base of \(V.\) To show the isomorphism \eqref{eq:quasireg_ideal}, consider the following classical  (Koszul) free resolution of the \(A\)-module \(T_0:\)
		\[K_\bullet(A,T_0)=\begin{tikzcd}
			\ldots & {\Lambda^2_A A\la e^2_i\ra_{i\in I}} & {\Lambda^1_A A\la e^1_i\ra_{i\in I}} & A & T_0.
			\arrow["\ve", from=1-4, to=1-5]
			\arrow["{d_0}", from=1-3, to=1-4]
			\arrow["{d_1}", from=1-2, to=1-3]
			\arrow[from=1-1, to=1-2]
		\end{tikzcd}\]
		Here \(d_j\) acts by the Koszul rule:
		\[
		d_j(e^{j+1}_{i_0}\wedge \ldots \wedge e^{j+1}_{i_j})=\sum_{s=0}^j (-1)^se^0_{i_s} e_{i_0}^{j}\wedge\ldots\wedge \wh{e}_{i_s}^j\wedge\ldots\wedge e_{i_j}^j. 
		\]
		Taking the tensor product \(K_\bullet(A,T_0)\otimes T_0\) clearly annihilates the differentials and thus we have the isomorphism
		\[
		\Lambda_A\ol{A}/\ol{A}^2\cong \Lambda_A V\xrightarrow{\cong} K_\bullet(A,T_0)\otimes T_0\cong (\Lambda_A V,d=0)=\Tor^A(T_0,T_0).\qedhere
		\]
	\end{proof}
	\begin{proof}[Proof of Theorem \ref{thm:conn_Fermat}]
		We start by observing that if \(\A\) is a cellular simplicial algebra over \(\Tc\) then it is free in each degree. Moreover, the degree-wise restriction of the canonical augmentation \(\A\to T_0\) constructed in Definition \ref{def:aug_ideal_Fermat} coincides with the canonical augmentation of free \(\Tc\)-algebras.
		\par Consider the augmentation ideal \(\ol{\A}\) of the algebra \(\A.\) By Lemma \ref{lem:quasireg_Fermat}, \(\ol{\A}_k\) is quasiregular for each \(k.\) Additionally, \(\ol{\A}_0=0\) by the assumption that \(\A\) is reduced. Thus we can apply the result of Quillen \cite[Theorem 6.12]{quillen1970co} (for a detailed proof see Quillen \cite[Theorem 8.8]{quillen1968homology})  which implies that \(\ol{\A}^r\) is~\((r-1)\)-connected. 
	\end{proof}
	\subsection{The Dold--Kan correspondence for commutative algebras in characteristic zero \`{a} la Quillen}\label{sec:DK_com}
	This section is an expansion and slight generalization of the Remark on p.223 of Quillen's \enquote{Rational Homotopy Theory} \cite{quillen1969rational}. This remark asserts a Quillen equivalence between categories of connected CDGA over a field of characteristic \(0\) and connected simplicial commutative algebras over the same field. We explicitly construct such an equivalence and extend it to the non-connected case.
	\par Throughout this section we fix a field \(K\) of characteristic \(0.\) We denote by \(\sCAlg\) the category of commutative simplicial algebras over \(K\) and by \(\DGCAlg\) the category of graded-commutative differential graded algebras over \(K.\)
	\begin{proposition}\label{prop:monoidal_moore}
		The functor \(N:\s\Vect_K\to \Ch_K\) of normalized chains induces a functor
		\[
		\N:\sCAlg\to\DGCAlg.
		\]
	\end{proposition}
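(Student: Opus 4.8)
The plan is to use the classical Eilenberg--Zilber / shuffle machinery. Recall that for simplicial vector spaces $V_\bullet, W_\bullet$ over $K$ the normalized chains functor $N$ is lax symmetric monoidal: the shuffle (Eilenberg--Zilber) map
\[
\nabla \colon N(V_\bullet)\otimes N(W_\bullet)\longrightarrow N(V_\bullet\otimes W_\bullet)
\]
is a natural, associative, unital and symmetric transformation (see e.g. the standard references on the Dold--Kan correspondence). Consequently $N$ sends commutative monoid objects in $s\Vect_K$ to commutative monoid objects in $\Ch_K$. Concretely, given $A_\bullet\in\sCAlg$ with multiplication $\mu\colon A_\bullet\otimes A_\bullet\to A_\bullet$, one defines the multiplication on $N(A_\bullet)$ as the composite
\[
N(A_\bullet)\otimes N(A_\bullet)\xrightarrow{\ \nabla\ } N(A_\bullet\otimes A_\bullet)\xrightarrow{\ N(\mu)\ } N(A_\bullet),
\]
and the unit as $K=N(K)\to N(A_\bullet)$ induced by the unit of $A_\bullet$. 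I would then note that the Leibniz rule is automatic: the differential on $N(A_\bullet)$ is a chain map, $\nabla$ and $N(\mu)$ are chain maps, hence so is their composite, which is exactly the statement that $d$ is a derivation for the induced product. Associativity, unitality and graded-commutativity of this product follow formally from the corresponding coherence properties of $\nabla$ (associativity from the hexagon/associativity of the shuffle map, commutativity from its compatibility with the symmetry, where the Koszul sign appears precisely because the symmetry on $\Ch_K$ carries the sign $(-1)^{|x||y|}$). This produces the functor $\N\colon\sCAlg\to\DGCAlg$ on objects; on morphisms one uses naturality of $\nabla$ in both variables, so that a simplicial algebra map $f$ is sent to the chain map $N(f)$, which is automatically multiplicative. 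Functoriality (preservation of identities and composition) is inherited from functoriality of $N$ together with naturality of $\nabla$.

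The only genuine point requiring care is the verification of graded-commutativity, i.e.\ that the induced product really lands in $\DGCAlg$ rather than merely in associative DGAs. The subtlety is that the shuffle product is \emph{not} commutative on the nose at the level of chains --- it is only commutative up to the Koszul sign, which is exactly what is built into the symmetric monoidal structure on $\Ch_K$. So the step I expect to be the main (though still routine) obstacle is checking that the symmetry isomorphism $\tau_{N(A),N(A)}$ of $\Ch_K$ intertwines $\nabla$ with $\nabla\circ\tau_{A,A}$; granting that, commutativity of $A_\bullet$ (i.e.\ $\mu\circ\tau_{A,A}=\mu$) transports to graded-commutativity of $N(A_\bullet)$. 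This is a standard property of the Eilenberg--Zilber map and I would simply cite it; for a self-contained account one can reduce to the normalized version of the shuffle formula $\nabla(x\otimes y)=\sum_{(\sigma,\tau)}\pm\, s_\tau(x)\cdot s_\sigma(y)$ and observe that interchanging $x$ and $y$ permutes the $(p,q)$-shuffles into the $(q,p)$-shuffles, introducing exactly the sign $(-1)^{pq}$. Since characteristic $0$ plays no role in this proposition (it will matter only later, for the comparison of homotopy theories), I would not invoke it here. This completes the construction of $\N$.
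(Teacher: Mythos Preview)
Your proof is correct and follows essentially the same approach as the paper: define the product on $N(A_\bullet)$ as $N(\mu)\circ\nabla$ via the Eilenberg--Zilber map and deduce associativity and graded-commutativity from the corresponding properties of $\nabla$, with the unit coming from degree~$0$. Your version is in fact more detailed than the paper's (you spell out the Leibniz rule and functoriality on morphisms), but the argument is the same.
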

	\begin{proof}
		Given a commutative simplicial algebra \[(\A,\mu:\A^{\otimes2}\to \A,1_\A:K\to\A),\] we have:
		\[
		\N(\A)=N(\UU\A).
		\]
		Here \(\UU:\sCAlg\to \s\Vect_K\) is the forgetful functor. 
		The multiplication \(\wh{\mu}\) on \(\N(\A)\) is defined as follows:
		\[\begin{tikzcd}
			{\N_p(\A)\otimes\N_q(\A)} && {\N_{p+q}(\A\otimes \A)} && {\N_{p+q}(\A).}
			\arrow["\EZ", from=1-1, to=1-3]
			\arrow["{\N_{p+q}(\mu)}", from=1-3, to=1-5]
			\arrow["{\wh{\mu}}"', curve={height=24pt}, from=1-1, to=1-5]
		\end{tikzcd}\]
		Here \(\EZ\) is the Eilenberg--Zilber map (see Quillen \cite[\S I.4]{quillen1969rational}). The associativity and graded-commutativity properties of the Eilenberg--Zilber map yield associativity and graded commutativity of \(\wh{\mu}.\) The unit of \(\N(\A)\) is defined as the image of \(1_\A\) in \({\N(\A)_0=\A_0.}\)
	\end{proof}
	\begin{proposition}\label{prop:standard_model_struct}
		We have the following two standard cofibrantly generated model structures:
		\begin{enumerate}[(i))]
			\item Let \(S^{k-1}\) be a chain complex that has the ground field in degree \(k-1\) with the zero differential. Let \(D^k\) be a chain complex with two copies of the ground field in degrees \(k-1,k\) and the identity differential between them. \(S^{k-1}\hookrightarrow D^k\) is the obvious inclusion. By convention we put \(S^{-1}=0\) and \(D^0=K[0]\) (a complex concentrated in degree~\(0\)).
			There is a model structure on \(\DGCAlg\) with weak equivalences given by quasi-isomorphism and a set of generating cofibrations given by the inclusions:
			\[
			\SS^g(S^{k-1})\hookrightarrow \SS^g(D^k),\quad k\ge 0.
			\]
			Here \(\SS^g\) is the graded symmetric algebra functor \(\Ch_K\to \DGCAlg.\)
			\item Let \(\mc{S}^{k-1}\) denote the simplicial set \(\Delta^{k-1}/\pd\Delta^{k-1}\) and let \(\mc{D}^k\) denote the simplicial set \(\Delta^k/\Lambda^k_0.\) Then we have an inclusion of simplicial sets induced by the inclusion of the last face into a \(k\)-simplex: 
			\(
			\mc{S}^{k-1}\hookrightarrow\mc{D}^k.
			\)
			By convention we put \(\mc{S}^{-1}=\emptyset.\)
			Denote by \(\ol{K}[-]\) the reduced simplicial chains functor \(\sSet\to \s\Vect_K.\) Then there is a model structure on \(\sCAlg\) with weak equivalences created by the forgetful functor \(\UU:\sCAlg\to \sSet\) and generating cofibrations given by the inclusions:
			\[
			\SS(\ol{K}[\mc{S}^{k-1}])\hookrightarrow \SS(\ol{K}[\mc{D}^{k}]),\quad k\ge 0.
			\]
			Here \(\SS\) denotes the symmetric algebra functor applied degreewise.
		\end{enumerate}
	\end{proposition}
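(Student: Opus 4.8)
The plan is to obtain both model structures by \emph{transferring} (right-lifting) along a free–forgetful adjunction from a model category whose cofibrantly generated structure is already known, and to verify the hypotheses of the lifting/recognition theorem (see, e.g., \cite[\S 11.3]{hirschhorn2003model}): smallness of the domains of the candidate generating (trivial) cofibrations, and the acyclicity condition asserting that relative cell complexes built from the images of the generating trivial cofibrations are weak equivalences. In both cases the source categories have all small limits and colimits and the forgetful functor creates filtered colimits, so smallness is automatic; the entire content therefore sits in the acyclicity condition, and it is precisely there that the hypothesis $\operatorname{char} K = 0$ enters.

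For (i) I would start from the projective model structure on $\Ch_K$ with generating cofibrations $\{S^{k-1}\hookrightarrow D^k\}_{k\ge0}$ and generating trivial cofibrations $\{0\hookrightarrow D^k\}_{k\ge1}$, and transfer it along the adjunction $\SS^g:\Ch_K\rightleftarrows\DGCAlg:\UU$. Then $\SS^g$ of these maps are the claimed generating (trivial) cofibrations, and what remains is to show that a transfinite composite of pushouts of maps in $\SS^g(\{0\hookrightarrow D^k\})$ is a quasi-isomorphism. The cleanest route is the path-object argument: over a field of characteristic zero $\DGCAlg$ carries a functorial path object $A\to A\otimes\Omega^\bullet_{\mathrm{poly}}(\Delta^1)\to A\times A$ (equivalently $A\to A[t,dt]\to A\times A$), which is a genuine factorization \emph{in} $\DGCAlg$ because $\Omega^\bullet_{\mathrm{poly}}(\Delta^1)$ is acyclic; combined with smallness this produces the transferred structure and recovers the standard one of \cite[\S V.3]{gelfand2013methods}.

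For (ii) I would first note that a morphism of simplicial commutative $K$-algebras is a weak equivalence of underlying simplicial sets if and only if its underlying morphism in $s\Vect_K$ is a quasi-isomorphism after $N$, so ``created by $\UU:\sCAlg\to\sSet$'' agrees with ``created by $\UU:\sCAlg\to s\Vect_K$''. I would then transfer the standard cofibrantly generated model structure on $s\Vect_K$ along $\SS:s\Vect_K\rightleftarrows\sCAlg:\UU$; under $N$ the generating cofibrations $\ol{K}[\mc{S}^{k-1}]\hookrightarrow\ol{K}[\mc{D}^{k}]$ and generating trivial cofibrations $0\hookrightarrow\ol{K}[\mc{D}^{k}]$ correspond to the complexes $S^{k-1}\hookrightarrow D^k$ and $0\hookrightarrow D^k$ of part (i), so the set-up is formally parallel. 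For the acyclicity condition one may again invoke a functorial path object — the cotensor $A\mapsto A^{\Delta^1}$ formed in $s\Vect_K$, which remains a simplicial commutative algebra — or argue directly that $\SS$ applied to a trivial cofibration into the contractible object $\ol{K}[\mc{D}^k]$ stays a weak equivalence, using that over $\Q$ the symmetric powers of a contractible simplicial vector space are contractible, and then close under transfinite composition. This recovers the model structure of \cite[Remarks II.4.2]{quillen2006homotopical}.

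I expect the acyclicity condition to be the only real obstacle, and it is a genuine one: the symmetric-algebra functors $\SS^g$ and $\SS$ need not preserve (trivial) cofibrations or weak equivalences in positive characteristic, so the transfer can fail there; over a field of characteristic zero the contractibility of $\Omega^\bullet_{\mathrm{poly}}(\Delta^1)$ — respectively of symmetric powers of contractible complexes — is exactly the input that makes the path-object argument go through. Everything else (cofibrant generation, functoriality of the factorizations, and, if wanted, left properness) is then formal.
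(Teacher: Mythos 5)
Your proposal is correct, but it is worth saying up front that the paper's own ``proof'' of this proposition is a single citation to \cite[Remarks II.4.2]{quillen2006homotopical} (with \cite[\S V.3]{gelfand2013methods} invoked for the DG case elsewhere); what you have written is essentially the standard argument \emph{behind} those citations, namely transfer along the free--forgetful adjunction plus the smallness and acyclicity checks. Two small calibrations. First, the claim that $\operatorname{char}K=0$ is ``exactly the input'' for the acyclicity condition is accurate for (i) but overstated for (ii): Quillen's II.4 establishes the transferred structure on simplicial algebras over an \emph{arbitrary} base, because the simplicial cotensor $A^{\Delta^1}$ (your first suggested path object) exists and works with no characteristic hypothesis, and in any case over a field every simplicial vector space is cofibrant so $\SS$ preserves weak equivalences by Dold's theorem; characteristic zero is genuinely load-bearing only for the symmetric powers in the DG case. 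Second, the conventions $S^{-1}=0$, $D^{0}=K[0]$ and the surrounding Dold--Kan context show that $\DGCAlg$ here means \emph{connective} (non-negatively graded chain) algebras, so the path object $A\otimes K[t,dt]$ does not literally live in the category ($dt$ sits in chain degree $-1$); you must pass to the good truncation $\tau_{\ge 0}(A\otimes K[t,dt])$, which is still a sub-CDGA by the Leibniz rule and still factors $A\to A\times A$ appropriately --- a standard but necessary adjustment. Alternatively, for (i) the acyclicity can be checked directly without any path object: a pushout of $\SS^g(0\hookrightarrow D^k)$ is $A\to A\otimes\SS^g(D^k)$, and over a field of characteristic zero $\SS^g(D^k)\to K$ is a quasi-isomorphism (exactness of $\Sigma_n$-coinvariants), so tensoring over $K$ and passing to transfinite composites preserves quasi-isomorphisms. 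With these adjustments your argument is a complete proof, and arguably more informative than the citation the paper gives.
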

	\begin{proof}
		This follows from \cite[Remarks II.4.2]{quillen2006homotopical}.
	\end{proof}
	\begin{proposition}\label{prop:existence_and_freeness}
		The functor \(\N\) has a left adjoint \(\Qc.\) Consider the standard cofibrantly generated model structures of Proposition \ref{prop:standard_model_struct} on \(\DGCAlg\) and \(\sCAlg.\) The functor~\(\Qc\)~carries cellular objects in \(\DGCAlg\) into cellular objects in \(\sCAlg.\)
	\end{proposition}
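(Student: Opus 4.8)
The plan is to construct the left adjoint $\Qc$ first on free CDGAs, where it is forced by the classical Dold--Kan equivalence, then to extend it cocontinuously to all of $\DGCAlg$, and finally to deduce the cellular statement from the fact that a left adjoint preserves colimits.

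Recall that $N$ restricts to an equivalence $N\colon s\Vect_K\xrightarrow{\ \sim\ }\Ch_{\ge 0}\Vect_K$ with quasi-inverse the classical Dold--Kan functor $\Gamma$. Write $\SS^g\colon\Ch_K\to\DGCAlg$ for the graded symmetric algebra functor, with right adjoint the forgetful functor $(-)_\natural$, and $\SS\colon s\Vect_K\to\sCAlg$ for the degreewise symmetric algebra functor, with right adjoint $\UU$; by construction (Proposition~\ref{prop:monoidal_moore}) the underlying complex of $\N(\A)$ is $N(\UU\A)$. For a non-negatively graded complex $V$ and $\A\in\sCAlg$ one then has
\begin{align*}
\Hom_{\DGCAlg}\bigl(\SS^g(V),\N(\A)\bigr)&\cong\Hom_{\Ch_K}\bigl(V,N\UU\A\bigr)\\
&\cong\Hom_{s\Vect_K}\bigl(\Gamma V,\UU\A\bigr)\\
&\cong\Hom_{\sCAlg}\bigl(\SS(\Gamma V),\A\bigr),
\end{align*}
using the two free--forgetful adjunctions and the Dold--Kan equivalence (together with $N\Gamma V\cong V$). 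Hence on free CDGAs the left adjoint is necessarily $\Qc(\SS^g V)=\SS(\Gamma V)$. To define $\Qc$ on an arbitrary $B\in\DGCAlg$, I would use the canonical reflexive-coequalizer presentation $\SS^g\bigl((\SS^g B_\natural)_\natural\bigr)\rightrightarrows\SS^g(B_\natural)\to B$ of $B$ by free algebras and set $\Qc(B)$ equal to the coequalizer obtained by applying $\SS(\Gamma(-))$ to this diagram; any cocontinuous functor out of $\DGCAlg$ is determined on free objects in exactly this way, and a routine diagram chase verifies the adjunction $\Qc\dashv\N$ in general. Alternatively, $\sCAlg$ and $\DGCAlg$ are locally presentable and $\N$ preserves limits and filtered colimits---because $N$ is an equivalence and $\UU$ creates both, while limits and filtered colimits in $\DGCAlg$ and $\sCAlg$ are created on underlying objects---so the adjoint functor theorem produces $\Qc$ abstractly, and the display above identifies it on free objects.

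For the cellular statement, recall from Definition~\ref{def:cell} and Proposition~\ref{prop:standard_model_struct}(i) that a cellular object of $\DGCAlg$ is a transfinite composition of pushouts of the generating cofibrations $\SS^g(S^{k-1})\hookrightarrow\SS^g(D^k)$, $k\ge 0$, starting from the initial object $K=\SS^g(0)$. Since $\Qc$ is a left adjoint it preserves all colimits, so it carries such a presentation to a transfinite composition of pushouts of the maps $\Qc\bigl(\SS^g(S^{k-1})\bigr)\to\Qc\bigl(\SS^g(D^k)\bigr)$ starting from $\Qc(K)=\SS(0)$, the initial simplicial commutative algebra. By the formula above these maps are $\SS(\Gamma S^{k-1})\to\SS(\Gamma D^k)$, so it remains to identify $\Gamma S^{k-1}\cong\ol{K}[\mc{S}^{k-1}]$ and $\Gamma D^k\cong\ol{K}[\mc{D}^k]$ compatibly with the two inclusions. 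This follows from $\Gamma N\cong\id$ together with the computations $N\bigl(\ol{K}[\mc{S}^{k-1}]\bigr)\cong S^{k-1}$ (reduced normalized chains of $\Delta^{k-1}/\pd\Delta^{k-1}$) and $N\bigl(\ol{K}[\mc{D}^k]\bigr)\cong D^k$ (reduced normalized chains of $\Delta^{k}/\Lambda^k_0$), and from checking that the map induced on normalized chains by the face inclusion $\mc{S}^{k-1}\hookrightarrow\mc{D}^k$ is exactly $S^{k-1}\hookrightarrow D^k$. Thus $\Qc$ sends the generating cofibrations of $\DGCAlg$ to those of $\sCAlg$ recorded in Proposition~\ref{prop:standard_model_struct}(ii), hence cellular objects to cellular objects; and the latter are free simplicial commutative algebras, being built by iterated free extensions along $\SS(\ol{K}[\mc{S}^{k-1}])\hookrightarrow\SS(\ol{K}[\mc{D}^k])$.

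I expect the main obstacle to be the bookkeeping in the last step: verifying that the two chosen families of generating cofibrations genuinely correspond under the Dold--Kan equivalence, i.e., that the reduced normalized chains of $\Delta^{k}/\Lambda^k_0$ are $D^k$ and that the face inclusion matches $S^{k-1}\hookrightarrow D^k$ on the nose (up to the evident isomorphism). Making the extension of $\Qc$ from free CDGAs to all of $\DGCAlg$ fully rigorous---the reflexive-coequalizer/monadicity argument and the triangle identities---is routine but slightly technical; the cleanest expository route is probably to invoke the adjoint functor theorem for bare existence and then record the explicit formula $\Qc(\SS^g V)=\SS(\Gamma V)$, which is all that the cellular argument and the later sections actually use.
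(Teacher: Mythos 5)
Your proposal is correct and follows essentially the same route as the paper: identify \(\Qc\) on free CDGAs as \(\SS\Gamma(-)\) via the chain of free--forgetful adjunctions together with Dold--Kan, then use left-adjointness to reduce the cellular statement to matching the generating cofibrations \(\SS^g(S^{k-1})\hookrightarrow\SS^g(D^k)\) with \(\SS(\ol{K}[\mc{S}^{k-1}])\hookrightarrow\SS(\ol{K}[\mc{D}^k])\). The only (cosmetic) difference is that the paper constructs \(\Qc\A\) for a general \(\A\) directly as the explicit quotient \(\SS\Gamma\UU\A/\II(\A)\) by the ideal of multiplicativity relations and verifies the universal property by hand, which is precisely what your reflexive-coequalizer extension (or adjoint functor theorem argument) computes.
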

	\begin{proof}
		Suppose \(\A\) is a commutative DGA. Denote by \(\UU:\DGCAlg\to \Ch_K\) the forgetful functor. Let \(\Gamma\UU\A\) be the simplicial vector space corresponding to \(\UU\A\) under the (non-monoidal) Dold--Kan correspondence. Denote by \(\SS\Gamma\UU\A\) the symmetric algebra functor applied to \(\Gamma\UU\A\) degreewise. 
		\par If \(x\in \A_p,\) then \(\Gamma x\in \Gamma\UU\A\sse\SS\Gamma\UU\A\) is the element corresponding to \(x\) under the identification \(\Gamma\UU\A\supseteq N\Gamma\UU\A\cong\UU\A.\) It is now clear that for any commutative simplicial algebra \(\B\) there is a bijection between morphisms
		\(
		\phi:\A\to\N\B
		\) in \(\Ch_K\)
		and morphisms \(\theta:\SS\Gamma\UU\A\to \B\) in \(\sCAlg\) such that 
		\[
		\theta(\Gamma x)=\phi(x),\quad \text{for any } x\in \A.
		\]
		Define 
		\begin{equation}\label{eq:functor_Q_over_com}
			\Qc\A=\SS\Gamma\UU\A/\II(\A).
		\end{equation}
		Here \(\II(\A)\) is the simplicial ideal generated by \((\wh{\mu}(\Gamma x,\Gamma y)-\Gamma \mu(x,y)\mid x,y\in \A).\) Then \(\theta\) induces a map \(\Qc\A\to \B\) if and only if \(\phi\) is a morphism in \(\DGCAlg.\) Thus we established a bijection:
		\[
		\Hom_{\DGCAlg}(\A,\N\B)\xrightarrow{\cong}\Hom_{\sCAlg}(\Qc\A,\B).
		\]
		Observe that the adjunction morphism \(\beta:1_{\DGCAlg}\Rightarrow \N\Qc\) is given by 
		\[
		\A\xrightarrow{\beta} \N\Qc\A,\quad x\xmapsto{\beta}\Gamma x+\II(\A).
		\]
		\par Let \(\A\) be a cellular algebra. Since \(\Qc\) is a left adjoint it is sufficient to show that the following cobase change in \(\DGCAlg\) is mapped into a composition of cobase changes of generating cofibrations in \(\sCAlg:\)
		\[\begin{tikzcd}
			{\SS^g(S^{k-1})} && {\SS^g(D^{k})} \\
			\\
			{\A'} && \A.
			\arrow[from=1-1, to=1-3]
			\arrow[from=1-1, to=3-1]
			\arrow[from=1-3, to=3-3]
			\arrow[from=3-1, to=3-3]
			\arrow["\lrcorner"{anchor=center, pos=0.125}, draw=none, from=3-3, to=1-1]
		\end{tikzcd}\]
		Observe that the map
		\[
		\Qc\SS^g(S^{k-1})\to \Qc\SS^g (D^k)
		\]
		is isomorphic to the map
		\[
		\SS \ol{K}[\mc{S}^{k-1}]\to \SS \ol{K}[\mc{D}^k]
		\]
		which is a generating cofibration and thus the map
		\[
		\Qc\A'\to \Qc\A
		\]
		is a cobase change of a generating cofibration in \(\sCAlg.\)
	\end{proof}
	\begin{proposition}\label{prop:homology_commutes}
		Let \(V:\Ch_K\) be a chain complex over \(K.\) Then the following maps are isomorphisms:
		\[
		\SS^g(HV)\xrightarrow{a}H(\SS^gV)\xrightarrow{b} \pi(\SS\Gamma V).
		\]
	\end{proposition}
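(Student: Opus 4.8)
The plan is to prove that $a$ and $b$ are isomorphisms by two separate reductions, both of which rest on the hypothesis $\operatorname{char}K=0$: since $K[S_n]$ is semisimple (Maschke), the functor of $S_n$-coinvariants $(-)_{S_n}$ on $K[S_n]$-modules is exact — equivalently it agrees with $S_n$-invariants via the symmetrizer idempotent $\frac1{n!}\sum_{\sigma\in S_n}\pm\,\sigma$ (with Koszul signs inserted when the action is on a graded object) and is therefore a direct summand of the identity functor. In particular $(-)_{S_n}$ commutes with homology and carries quasi-isomorphisms to quasi-isomorphisms; this is the single fact doing the real work below.

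For the map $a$ I would write the graded symmetric algebra as $\SS^g V=\bigoplus_{n\ge0}(V^{\otimes n})_{S_n}$, the coinvariants taken for the Koszul-signed permutation action. Exactness of $(-)_{S_n}$ gives $H(\SS^g V)\cong\bigoplus_n\bigl(H(V^{\otimes n})\bigr)_{S_n}$, and the K\"{u}nneth theorem over the field $K$ supplies an $S_n$-equivariant isomorphism $(HV)^{\otimes n}\xrightarrow{\cong}H(V^{\otimes n})$; passing to $S_n$-coinvariants and summing over $n$ recovers exactly the map $a\colon\SS^g(HV)\xrightarrow{\cong}H(\SS^gV)$.

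For the map $b$ I would invoke that $N\colon s\Vect_K\to\Ch_K$ is lax symmetric monoidal via the Eilenberg--Zilber (shuffle) map $\EZ$, so that for any simplicial vector space $W$ the iterated shuffle map $\EZ^{(n)}\colon(NW)^{\otimes n}\to N(W^{\otimes n})$ is $S_n$-equivariant (Koszul signs on the source, plain permutation of tensor factors on the target) and, by the Eilenberg--Zilber theorem, a quasi-isomorphism. Applying the exact functor $(-)_{S_n}$, and using that $N$ is exact and hence commutes with the cokernel defining coinvariants, yields quasi-isomorphisms $(NW)^{\otimes n}_{S_n}\xrightarrow{\sim}N\bigl((W^{\otimes n})_{S_n}\bigr)$; summing over $n$ this is a quasi-isomorphism $\SS^g(NW)\xrightarrow{\sim}\N(\SS W)$ of commutative differential graded algebras, the target carrying the shuffle multiplication since $\SS W$ is a simplicial commutative algebra. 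Specializing to $W=\Gamma V$ and feeding in the Dold--Kan isomorphism $N\Gamma V\cong V$ turns this into $\SS^g V\xrightarrow{\sim}\N(\SS\Gamma V)$; taking homology and using $\pi(X)\cong H(NX)$ for $X\in s\Vect_K$ identifies the resulting isomorphism with $b\colon H(\SS^gV)\xrightarrow{\cong}\pi(\SS\Gamma V)$.

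The routine bookkeeping I would suppress: that the signs in the shuffle formula conspire with the Koszul signs so that $\EZ^{(n)}$ is genuinely $S_n$-equivariant, that the map assembled above coincides with the one in the statement, and that $H$ and $\pi$ commute with the infinite direct sums over $n$. There is no single "hard step" here beyond the exactness of $S_n$-(co)invariants in characteristic zero — that is precisely what allows homology to pass through $\SS^g$ in part $a$ and what allows the Eilenberg--Zilber quasi-isomorphism to survive the quotient by $S_n$ in part $b$, and it is the one place the argument would collapse in positive characteristic.
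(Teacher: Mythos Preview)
Your argument is correct and is precisely the proof given in the reference the paper cites (Quillen, \emph{Rational Homotopy Theory}, Appendix~B, Proposition~2.1): Maschke/semisimplicity of $K[S_n]$ makes $(-)_{S_n}$ exact, which combined with K\"unneth handles $a$, and the $S_n$-equivariance of the iterated shuffle map together with Eilenberg--Zilber handles $b$. The paper does not spell this out but simply points to Quillen, so there is nothing to compare --- you have reconstructed the intended proof.
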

	\begin{proof}
		See Quillen \cite[Appendix B, Proposition 2.1]{quillen1969rational}.
	\end{proof}
	Observe that each cellular algebra \(\A\) in \(\DGCAlg\) is canonically augmented with respect to the cellular structure. Indeed, both \(\SS^g(S^{k-1})\) and \(\SS^g(D^k)\) are augmented in a way that turns the canonical map \(\SS^g(S^{k-1})\to \SS^g(D^k)\) into a map of augmented algebras. This augmentation annihilates all monomials of positive lengths in both algebras. We can also assume that \(\A'\) is augmented in a way compatible with the map \(\SS^g(S^{k-1})\to \A'\) by induction. Thus \(\A\) is canonically augmented as a pushout. Similarly, a cellular algebra in \(\sCAlg\) is canonically augmented with respect to the cellular structure.
	\begin{definition}\label{def:aug_ideal}
		For a cellular \(\A\) in \(\DGCAlg\) define the {\it augmentation ideal} \(\ol{\A}\) as the kernel of the canonical augmentation map defined above. Similarly, for cellular \(\A\) in \(\sCAlg\) define the {\it augmentation ideal} \(\ol{\A}\) as the kernel of the canonical augmentation map defined above.
	\end{definition}
	\begin{theorem}\label{thm:eq_on_free}
		If \(\A\) is cellular in \(\DGCAlg\) then \(\beta: \A\to \N\Qc\A\) is a weak equivalence.
	\end{theorem}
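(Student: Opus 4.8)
The plan is to reduce the assertion to an isomorphism on homology and then induct over a cellular presentation of $\A$, the inductive step being a comparison of the homology spectral sequences associated to a single cell attachment on the two sides of $\beta$. Since the weak equivalences in $\DGCAlg$ are the quasi-isomorphisms and $H_*(\N\B)=\pi_*(\B)$ for every $\B\in\sCAlg$, it suffices to prove that $H_*(\beta_\A)\colon H_*(\A)\to\pi_*(\Qc\A)$ is an isomorphism. Write $\A=\colim_\alpha\A_\alpha$ as a transfinite composition of cobase changes of the generating cofibrations $\SS^g(S^{k-1})\hookrightarrow\SS^g(D^k)$, starting from $\A_0=K$. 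The limit stages cause no trouble: $\Qc$, being a left adjoint (Proposition~\ref{prop:existence_and_freeness}), preserves colimits, and both $H_*$ and $\pi_*$ commute with filtered colimits, so if $H_*(\beta_{\A_\alpha})$ is an isomorphism for all $\alpha<\lambda$ then so is $H_*(\beta_{\A_\lambda})$; the base case $\A_0=K$ is trivial since $\beta_K=\id$. Thus everything reduces to a single cell attachment.

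So assume $\A$ arises from $\A'$ by a cobase change along $g\colon\SS^g(S^{k-1})\to\A'$, i.e.\ $\A=\A'\la y\ra$ with $\deg y=k$ and $dy=f$, where $f\in\A'_{k-1}$ is the cycle $g$ selects, and assume inductively that $\beta_{\A'}$ is a quasi-isomorphism. As $\Qc$ preserves pushouts, $\Qc\A$ is the pushout of $\Qc\SS^g(D^k)\hookleftarrow\Qc\SS^g(S^{k-1})\xrightarrow{\Qc g}\Qc\A'$, which by the proof of Proposition~\ref{prop:existence_and_freeness} is the pushout of $\SS\ol K[\mc D^k]\hookleftarrow\SS\ol K[\mc S^{k-1}]\to\Qc\A'$. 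The inclusion $\ol K[\mc S^{k-1}]\hookrightarrow\ol K[\mc D^k]$ is a degreewise split monomorphism of simplicial vector spaces with cokernel $\ol K[\mc S^{k}]$, so applying $\SS$ degreewise splits it multiplicatively, $\SS\ol K[\mc D^k]\cong\SS\ol K[\mc S^{k-1}]\otimes\SS\ol K[\mc S^{k}]$, and hence $\Qc\A\cong\Qc\A'\otimes\SS\ol K[\mc S^{k}]$ as graded algebras, with differential twisted by the attaching map. By naturality of the unit $\beta$, this twist is a Hirsch extension attaching a cycle representing $\beta_{\A'}([f])\in\pi_{k-1}\Qc\A'$, matching the Hirsch extension $\A'\la y\ra$ with attaching class $[f]$ under $H_*(\beta_{\A'})$.

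Now filter $\A$ by powers of $y$, $F^p\A=\bigoplus_{q\le p}\A'y^q$, and $\N\Qc\A$ correspondingly by powers of the generator $\gamma$ of $\SS\ol K[\mc S^k]$; the case $k=0$ (where $f=0$ and $\A=\A'\otimes K[y]$, $\Qc\A=\Qc\A'\otimes K[\gamma]$ honestly) follows immediately from the Künneth formula and $\pi_*(\Qc\A'\otimes K[\gamma])=\pi_*(\Qc\A')\otimes K[\gamma]$, so assume $k\ge 1$. Since $\A$ is connective, these filtrations are exhaustive and, in each total degree, bounded, so the two spectral sequences converge, and $\beta_\A$ is a filtered map. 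Using the Eilenberg--Zilber quasi-isomorphism and the Künneth formula, together with the identification $\pi_*\SS\ol K[\mc S^k]\cong\SS^g(K[k])$ furnished by Proposition~\ref{prop:homology_commutes}, one identifies the $E_1$-pages as $H_*(\A')\otimes\SS^g(K[k])$ and $\pi_*(\Qc\A')\otimes\SS^g(K[k])$, with $d_1$-differentials given by multiplication by $[f]$, resp.\ by $\beta_{\A'}([f])$; on $E_1$ the map induced by $\beta_\A$ is $H_*(\beta_{\A'})\otimes\id$, an isomorphism of $d_1$-complexes by the inductive hypothesis. Hence $\beta_\A$ is an isomorphism on $E_2$ and, by convergence, on the abutments, so $H_*(\beta_\A)$ is an isomorphism. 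This closes the induction.

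The principal obstacle is the single-cell step, and within it the verification that the twisted differential on $\Qc\A\cong\Qc\A'\otimes\SS\ol K[\mc S^k]$ really is the Hirsch extension on the class corresponding to $[f]$ under $\beta_{\A'}$ --- i.e.\ tracking the attaching map simultaneously through the adjunction $\Qc\dashv\N$ and through the monoidal structure of $N$ (the Eilenberg--Zilber map of Proposition~\ref{prop:monoidal_moore}). Once this compatibility is in hand, the spectral-sequence comparison is purely formal. A variant sidestepping spectral sequences altogether uses the Eilenberg--Zilber quasi-isomorphism $\N\Qc\A'\otimes\N\SS\ol K[\mc S^k]\to\N(\Qc\A'\otimes\SS\ol K[\mc S^k])=\N\Qc\A$ together with a quasi-isomorphism $\SS^g(K[k])\to\N\SS\ol K[\mc S^k]$ supplied again by Proposition~\ref{prop:homology_commutes}, and then invokes the standard fact that a quasi-isomorphism of connective DGAs induces a quasi-isomorphism between matching Hirsch extensions.
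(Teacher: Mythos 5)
Your argument is correct in substance, but it is genuinely different from the paper's. The paper runs a single global argument: it filters $\A$ and $\Qc\A$ by powers of the \emph{canonical augmentation ideal}, identifies $\gr\beta$ with the map $\SS^g(V)\to\N\SS\Gamma V$, which is an equivalence by Proposition~\ref{prop:homology_commutes}, and then passes from the graded equivalence to an equivalence of the filtered objects; the price is that one must know the filtration is complete, which is exactly what the Curtis connectivity theorem (Theorem~\ref{thm:conn_Fermat}) and the Gwilliam--Pavlov lemma on complete filtered objects are imported for, and a separate d\'evissage is needed for non-connected $\A$. You instead induct over the cells and, for a single attachment, compare the spectral sequences of the filtrations by powers of the new generator; since that filtration is non-negative, exhaustive, and bounded in each total degree (for $k\ge 1$; you treat $k=0$ separately), convergence is automatic, so you bypass the connectivity theorem and the connected/non-connected dichotomy entirely --- at the cost of the per-cell bookkeeping, which in the Fermat-theoretic version of the statement would use Proposition~\ref{prop:homology_commutes_with_completion} in place of Proposition~\ref{prop:homology_commutes}. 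Two remarks on your write-up. First, the splitting $\SS\ol K[\mc D^k]\cong\SS\ol K[\mc S^{k-1}]\otimes\SS\ol K[\mc S^k]$ holds only degreewise, not as simplicial algebras (a simplicial splitting would contradict the contractibility of $\ol K[\mc D^k]$); this is precisely why you cannot apply K\"unneth directly and must pass through the filtration, so the phrase ``splits it multiplicatively'' should be read, as you evidently intend, at the level of underlying graded objects. Second, the ``principal obstacle'' you flag --- identifying the twisted differential as the Hirsch extension on $\beta_{\A'}([f])$ --- is not actually needed for the spectral-sequence version: $\beta_\A$ is a filtered map, so it induces a morphism of spectral sequences commuting with whatever $d_1$ is, and once the $E_1$-pages are identified via Proposition~\ref{prop:homology_commutes} and the map on $E_1$ is seen to be $H_*(\beta_{\A'})$ tensored with that isomorphism (which only requires the compatibility of $\beta$ with the Eilenberg--Zilber product on generators), the comparison theorem finishes the job without knowing $d_1$ explicitly. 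Only your K\"unneth-plus-Hirsch variant at the end genuinely requires that identification.
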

	\begin{proof}
		Let \(\ol{\Qc\A}^r\) be the powers of augmentation ideal of \(\Qc\A\) defined in Definition \ref{def:aug_ideal}. Let \(\ol{\A}\) be the augmentation ideal of a cellular algebra \(\A\) in \(\DGCAlg.\) It follows that~\({\ol{\Qc\A}^p\cdot \ol{\Qc\A}^q\sse \ol{\Qc\A}^{p+q}}\) and thus 
		\[
		\beta \ol{\A}^r\sse \N\ol{\Qc\A}^r.
		\]
		Thus there is an induced map 
		\[
		\gr\beta:\gr\A\to \gr \N\Qc\A=\N\gr\Qc\A \quad\text{ by exactness of }\N.
		\]
		By Proposition \ref{prop:existence_and_freeness} the object \(\Qc\A\) is cellular and thus by cellular induction
		\[
		\gr\Qc\A\cong \SS(\ol{\Qc\A}/\ol{\Qc\A}^2).
		\]
		Analogously,
		\[
		\gr\A\cong \SS^g(\ol{\A}/\ol{\A}^2).
		\]
		Below we show that the map \(\gr_1\beta\) induces an isomorphism:
		\[
		\ol{\A}/\ol{\A}^2\xrightarrow{\cong} \ol{\N\Qc\A}/\ol{\N\Qc\A}^2
		\]
		Since the augmentation ideal is independent of the differential on \(\A\) we can identify \(\A\) with \(\SS^g(V)\) for some graded vector space \(V.\) Then the graded vector space \(\ol{\A}/\ol{\A}^2\) is generated by the image of \(V\) (it is isomorphic to \(V\)). By the definition of \(\Qc\) given in the proof of Proposition \ref{prop:existence_and_freeness} we have 
		\[
		\Qc\A=\SS(\Gamma\UU\A)/\II(\A).
		\]
		Thus \(\Qc\A\) is generated by \(V\) and all its degenerations. After modding out degenerations with \(\N\) we are left again with an algebra generated by \(V\) and the augmentation ideal \(\ol{\N\Qc\A}\) is generated as an ideal by \(V.\) Thus we have:
		\[
		\ol{\A}/\ol{\A}^2\cong V\cong \ol{\N\Qc\A}/\ol{\N\Qc\A}^2.
		\]
		Thus \(\gr\beta\) is of the form:
		\[
		\SS^g(V)\xrightarrow{\gr\beta}\N\SS\Gamma V.
		\]
		The latter map is a weak equivalence by Proposition \ref{prop:homology_commutes}. 
		\par Here we adopt the terminology of Gwilliam--Pavlov \cite[\S 3]{gwilliam2018enhancing}. Let \(\ol{\A}^0=\A\) and~\({\ol{\N\Qc\A}^0=\N\Qc\A}\). We have two cofibrant sequences:
		\begin{gather*}
			\boldsymbol{\ul{\A}}:\quad \ul{\A}(n):=\begin{cases}
				\A,& n\ge 0;\\
				\ol{\A}^{-n},& n<0
			\end{cases}, \quad \ul{\A}(n)\to \ul{\A}(n+1):=\begin{cases}
				\A\xrightarrow{\id_\A}\A, & n\ge 0;\\
				\ol{\A}^{-n}\hookrightarrow \ol{\A}^{-n+1}, & n<0,
			\end{cases}\\
			\text{and}\\
			\boldsymbol{\ul{\N\Qc\A}}:\hspace{218pt} \ul{\N\Qc\A}(n):=\begin{cases}
				\N\Qc\A,& n\ge 0;\\
				\ol{\N\Qc\A}^{-n},& n<0,
			\end{cases}\\ \ul{\N\Qc\A}(n)\to \ul{\N\Qc\A}(n+1):=\begin{cases}
				\N\Qc\A\xrightarrow{\id_{\N\Qc\A}}\N\Qc\A, & n\ge 0;\\
				\ol{\N\Qc\A}^{-n}\hookrightarrow \ol{\N\Qc\A}^{-n+1}, & n<0.
			\end{cases}
		\end{gather*}
		If \(\A\) is connected (i.e., \(\A_0\cong K\)) for a large enough \(r\) we have \((\ol{\A}^r)_q=0\) and \(\pi_q(\ol{\Qc\A}^r)=0\) by Theorem \ref{thm:conn_Fermat} for \(\Tc=\Com_K\). Thus the sequences \(\ul{\A}\) and \(\ul{\N\Qc\A}\) are both complete. Hence \cite[Lemma 3.25]{gwilliam2018enhancing} is applicable and we see that since the map \(\gr\beta\) is a weak equivalence (i.e., \(\beta\) induces a \emph{graded equivalence}) we have a weak equivalence 
		\[
		\beta:\A=\ul{\A}(\infty)\to \ul{\N\Qc\A}(\infty)=\N\Qc\A.
		\]
		\par If \(\A\) is not connected consider the augmentation ideal in degree \(0\) denoted by \(\ol{\A}_0.\) We have a short exact sequence:
		\[
		0 \to \ol{\A}_0\to \A\to \A/\ol{\A}_0\to 0.
		\]
		The algebra \(\A/\ol{\A}_0\) is connected and thus the map
		\[
		\A/\ol{\A}_0\to \N\Qc(\A/\ol{\A}_0)
		\]
		is a weak equivalence. It remains to observe that \(\ol{\N\Qc(\A)}_0\) is isomorphic to \(\ol{\A}_0\) and \(\ol{\Qc\A}_0\) and thus we have a map of short exact sequences
		\[\begin{tikzcd}
			0 & {\ol{\A}_0} & \A & {\A/\ol{\A}_0} & 0 \\
			0 & {\ol{\N\Qc\A}_0} & \N\Qc\A & {\N\Qc(\A/\ol{\A}_0)=\N\Qc\A/\ol{\N\Qc\A}_0} & 0.
			\arrow[from=1-1, to=1-2]
			\arrow[from=1-2, to=1-3]
			\arrow[from=1-3, to=1-4]
			\arrow[from=1-4, to=1-5]
			\arrow[from=2-1, to=2-2]
			\arrow[from=1-2, to=2-2]
			\arrow[from=2-2, to=2-3]
			\arrow[from=1-3, to=2-3]
			\arrow[from=2-3, to=2-4]
			\arrow[from=1-4, to=2-4]
			\arrow[from=2-4, to=2-5]
		\end{tikzcd}\]
		This map induces an isomorphism in homology and we are done.
	\end{proof}
	\begin{theorem}\label{thm:DK_for_Com}
		The adjoint pair
		\[\begin{tikzcd}
			\sCAlg && \DGCAlg
			\arrow[""{name=0, anchor=center, inner sep=0}, "\N"', shift right=2, from=1-1, to=1-3]
			\arrow[""{name=1, anchor=center, inner sep=0}, "\Qc"', shift right=2, from=1-3, to=1-1]
			\arrow["\dashv"{anchor=center, rotate=-90}, draw=none, from=1, to=0]
		\end{tikzcd}\]
		is a Quillen equivalence.
	\end{theorem}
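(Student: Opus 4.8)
The plan is to combine the explicit description of the adjunction $\Qc\dashv\N$ with Proposition \ref{prop:existence_and_freeness} and Theorem \ref{thm:eq_on_free}, proceeding in four steps: first, show that $(\Qc,\N)$ is a Quillen adjunction; second, observe that $\N$ both preserves and reflects all weak equivalences; third, promote Theorem \ref{thm:eq_on_free} from cellular to arbitrary cofibrant objects; and fourth, conclude via the standard criterion for a Quillen equivalence.

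For the first step, I would check that $\Qc$ preserves cofibrations and trivial cofibrations. As a left adjoint, $\Qc$ commutes with colimits and preserves retracts, so for cofibrations it suffices to see that it carries the generating cofibrations $\SS^g(S^{k-1})\hookrightarrow\SS^g(D^{k})$ of $\DGCAlg$ to cofibrations of $\sCAlg$ — and this is precisely the computation in the proof of Proposition \ref{prop:existence_and_freeness}, which identifies their images with the generating cofibrations $\SS(\ol{K}[\mc S^{k-1}])\hookrightarrow\SS(\ol{K}[\mc D^{k}])$. For trivial cofibrations one argues the same way: in the model structure of Proposition \ref{prop:standard_model_struct} the generating trivial cofibrations may be taken to be the inclusions $\SS^g(0)=K\hookrightarrow\SS^g(D^{k})$ for $k\ge 1$ (these are weak equivalences, since $D^{k}$ is acyclic and $\SS^g$ preserves quasi-isomorphisms over a field of characteristic zero, and the class of maps with the right lifting property against them is exactly that of the surjections in positive degrees, i.e.\ the fibrations), and $\Qc$ sends such a map to $K\hookrightarrow\SS(\ol{K}[\mc D^{k}])$, which is again a weak equivalence because $\mc D^{k}=\Delta^{k}/\Lambda^{k}_{0}$ is contractible, so $\ol{K}[\mc D^{k}]$ is acyclic and hence $\SS(\ol{K}[\mc D^{k}])$ is weakly equivalent to $\SS(0)=K$. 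Since $\Qc$ preserves colimits and retracts, it therefore preserves all trivial cofibrations, and $(\Qc,\N)$ is a Quillen adjunction.

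For the second step, a weak equivalence in $\sCAlg$ is by definition a map whose underlying map of simplicial $K$-vector spaces is a $\pi_*$-isomorphism, and $\pi_*$ of a simplicial vector space is computed by the homology of its normalized chain complex; hence $\N$ sends weak equivalences to quasi-isomorphisms and, conversely, reflects them. For the third step, Theorem \ref{thm:eq_on_free} gives that the unit $\beta_{\A}\colon\A\to\N\Qc\A$ is a weak equivalence whenever $\A$ is cellular, and every cofibrant object of the cofibrantly generated category $\DGCAlg$ is a retract of a cellular one; since $\beta$ is a natural transformation, $\beta_{\A}$ is then a retract in the arrow category of a weak equivalence, hence itself a weak equivalence, for every cofibrant $\A$.

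Finally, for the fourth step, by the criterion for Quillen equivalences it suffices to show that for every cofibrant $\A\in\DGCAlg$ and fibrant $\B\in\sCAlg$ a morphism $f\colon\Qc\A\to\B$ is a weak equivalence if and only if its adjunct $\N(f)\circ\beta_{\A}\colon\A\to\N\B$ is; since $\beta_{\A}$ is a weak equivalence by the third step and $\N(f)$ is a weak equivalence exactly when $f$ is by the second step, the two-out-of-three property applied to the composite $\N(f)\circ\beta_{\A}$ yields this equivalence at once. All of the genuine difficulty lies upstream, in Theorem \ref{thm:conn_Fermat} and Theorem \ref{thm:eq_on_free}; within the present argument the only point that is not purely formal is the first step, and even there the harder half — the behaviour of $\Qc$ on generating cofibrations — has already been carried out in Proposition \ref{prop:existence_and_freeness}.
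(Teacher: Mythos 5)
Your proposal is correct and follows essentially the same route as the paper: reduce to cellular objects via the retract argument, invoke Theorem \ref{thm:eq_on_free} for the unit, and use that \(\N\) creates weak equivalences to conclude via the standard two-out-of-three criterion. You spell out the Quillen-adjunction verification (behaviour of \(\Qc\) on generating trivial cofibrations) that the paper leaves implicit, and that part of your argument is also correct.
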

	\begin{proof}
		Since any cofibrant object in \(\DGCAlg\) is a retract of a cellular object and \(\Qc\) carries cellular objects to cellular objects, Theorem \ref{thm:eq_on_free} implies that the adjunction morphism \(\beta\) is an equivalence on cofibrant objects. Thus the pair \((\Qc\dashv \N)\) is a Quillen equivalence.
	\end{proof}
	\subsection{The Dold--Kan correspondence for Fermat theories over a field of characteristic zero.}\label{sec:DK_Fermat}
	For this section we fix a field \(K\) of characteristic \(0.\)
	\begin{notation}
		Let \(\Tc\) be a Fermat theory over \(K\) (Definition \ref{def:Fermat_over_R}). Let \(\dg\Tc\Alg\) denote the subcategory of \(\DGCAlg\) consisting of objects with a structure of \(\Tc\Alg\) in degree \(0\) and morphisms that respect this structure. Also denote by \(T_0\) the {\it ring of coefficients} for a Fermat theory \(\Tc,\) i.e., the image under the Yoneda embedding of the \(0\)'th object in \(\Tc.\) 
	\end{notation}
	We have the following analog of Proposition \ref{prop:model_struct_on_sFermat} for DG-algebras over \(\Tc:\)
	\begin{proposition}\label{prop:model_struct_on_DGFermat}
		With the notation as above we have a cofibrantly generated model structure on the category \(\dg\Tc\Alg.\)
		\par Consider two types of chain complexes \(S^{k-1}, D^k\) defined in Proposition \ref{prop:standard_model_struct} (i).
		There is a model structure on \(\dg\Tc\Alg\) with weak equivalences given by quasi-isomorphisms and a set of generating cofibrations given by the inclusions
		\[
		\FF^g(S^{k-1})\hookrightarrow \FF^g(D^k),\quad k\ge 0.
		\]
		Here \(\FF^g\) is the free functor of \(\Tc\) in degree \(0\) and the graded symmetric algebra functor over \(T_0\) in positive degrees.
	\end{proposition}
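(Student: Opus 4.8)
The plan is to obtain this model structure by transfer along the free--forgetful adjunction $\FF^g\colon\Ch_K\rightleftarrows\dg\Tc\Alg\colon\UU$, where $\Ch_K$ denotes non-negatively graded complexes of $K$-vector spaces with their projective model structure (quasi-isomorphisms, degreewise surjections in positive degrees as fibrations, the inclusions $S^{k-1}\hookrightarrow D^k$ for $k\ge 0$ as generating cofibrations and $0\hookrightarrow D^k$ for $k\ge 1$ as generating trivial cofibrations), $\UU$ forgets a DG $\Tc$-algebra to its underlying complex, and $\FF^g$ is the functor of the statement. That $\FF^g\dashv\UU$ is immediate from the universal properties: a map $\FF^g(V)\to A$ is the same datum as a chain map $V\to\UU A$. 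The category $\dg\Tc\Alg$ is monadic over $\Ch_K$ via the finitary monad $\UU\FF^g$ (finitary because the operations of $\Tc$ and the graded-symmetric powers preserve filtered colimits), hence locally presentable --- in particular complete, cocomplete, with all objects small --- so the domains of $\FF^g(S^{k-1})\hookrightarrow\FF^g(D^k)$ and $T_0=\FF^g(0)\hookrightarrow\FF^g(D^k)$ are small relative to the corresponding cell complexes and the small object argument applies. Declaring the quasi-isomorphisms to be the weak equivalences, $I=\{\FF^g(S^{k-1})\hookrightarrow\FF^g(D^k):k\ge 0\}$ and $J=\{T_0\hookrightarrow\FF^g(D^k):k\ge 1\}$, the recognition theorem for cofibrantly generated model structures --- cf.\ \cite[\S 11]{hirschhorn2003model}, or the argument of \cite[II.4]{quillen2006homotopical} and \cite[\S V.3]{gelfand2013methods} already used for Propositions~\ref{prop:model_struct_on_sFermat} and \ref{prop:standard_model_struct} --- produces the asserted model structure, with fibrations the maps degreewise surjective in positive degrees, provided every relative $J$-cell complex is a quasi-isomorphism. (Equivalently, one could transfer the structure of Proposition~\ref{prop:standard_model_struct}(i) along degreewise $\Tc$-completion, Construction~\ref{con:completion}, with the same verification below.)

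This acyclicity condition is the one non-formal point, and the step I expect to be the main obstacle. Since quasi-isomorphisms are stable under the transfinite compositions occurring in $J$-cell complexes (homology commutes with the relevant filtered colimits), it is enough to show that for every $B\in\dg\Tc\Alg$ and each $k\ge 1$ the pushout of $T_0\hookrightarrow\FF^g(D^k)$ along the unique map $T_0\to B$ is a quasi-isomorphism; and since $T_0=\FF^g(0)$ is the initial object, this pushout is the coproduct $B\amalg\FF^g(D^k)$. First, $\FF^g(D^k)\simeq T_0$ for every $k\ge 1$. When $k\ge 2$ the complex $D^k$ lies in strictly positive degrees, so $\FF^g(D^k)=\SS^g_{T_0}(D^k)$ is a graded-symmetric algebra, and since $\operatorname{char}K=0$ Proposition~\ref{prop:homology_commutes} (after base change from $K$ to $T_0$) gives $H(\FF^g(D^k))\cong\SS^g_{T_0}(H(D^k))=\SS^g_{T_0}(0)=T_0$. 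When $k=1$, $\FF^g(D^1)$ is obtained from the free $\Tc$-algebra $y\AA^1_\Tc$ on one degree-zero generator $a$ by freely adjoining a degree-one generator $b$ with $db=a$; this is exactly the Koszul complex of the single element $a$, and by Hadamard's lemma together with the quasiregularity of the augmentation ideal established in Lemma~\ref{lem:quasireg_Fermat} (so that $a$ is a non-zero-divisor), it resolves $y\AA^1_\Tc/(a)\cong T_0$.

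It then remains to see that $B\to B\amalg\FF^g(D^k)$ is a quasi-isomorphism. For $k\ge 2$ the coproduct is the free graded extension of $B$ by the new generators, i.e.\ $B\otimes_{T_0}\FF^g(D^k)$ as a tensor product of complexes of $T_0$-modules; since $\FF^g(D^k)$ is degreewise free over $T_0$, Künneth over $T_0$ gives $H(B\amalg\FF^g(D^k))\cong H(B)\otimes_{T_0}H(\FF^g(D^k))\cong H(B)$. The case $k=1$ is the genuinely substantive one and is where the Fermat hypothesis is used essentially: in degree zero the coproduct is the free $\Tc$-algebra $B_0\{a\}$ on one generator over $B_0$, which is not a polynomial ring, so the Künneth argument is unavailable. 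Instead one notes that adjoining the pair $(a,b)$ with $db=a$ is an elementary acyclic extension --- the retraction $B\amalg\FF^g(D^1)\to B$ killing $a$ and $b$ carries an explicit contracting homotopy, built degreewise from the Hadamard remainder: for a $\Tc$-function $f$ of $a$ with coefficients in $B_0$ one sets $h(f)=g_f\,b$, where $f-f|_{a=0}=a\,g_f$, and extends $h$ by the Leibniz rule twisted by the retraction --- so $B\hookrightarrow B\amalg\FF^g(D^1)$ is a quasi-isomorphism. With the acyclicity condition thereby verified, the transfer theorem delivers the model structure of the statement, with generating cofibrations $\FF^g(S^{k-1})\hookrightarrow\FF^g(D^k)$.
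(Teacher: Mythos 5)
Your argument is essentially correct, but note that the paper does not prove this proposition at all: its entire proof is the citation \cite[\S 6]{carchedi2012homological}, so what you have written is in effect a reconstruction of the argument in that reference rather than an alternative to anything in the paper. The route is the expected one --- transfer along \(\FF^g\dashv\UU\) from the projective model structure on non-negatively graded complexes, with the acyclicity of relative \(J\)-cell complexes as the only non-formal input --- and you correctly isolate the two cases: for \(k\ge 2\) the coproduct \(B\amalg\FF^g(D^k)\) is the base change \(B\otimes_{T_0}\SS^g_{T_0}(T_0\otimes_K D^k)\), settled by the characteristic-zero contraction of the symmetric algebra on an acyclic complex, while for \(k=1\) the degree-zero part becomes the free \(\Tc\)-extension \(B_0\{a\}\) and the Fermat property is genuinely needed. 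One point deserves more care than you give it. Your homotopy \(h(f)=g_f\,b\) is only well defined, and \(B_0\)-linear --- which is what you need in order to tensor it up to all of \(B\amalg\FF^g(D^1)\cong B\otimes_{B_0}\bigl[B_0\{a\}\,b\xrightarrow{\;\cdot a\;}B_0\{a\}\bigr]\) --- if the Hadamard quotient \(g_f\) is \emph{unique}, i.e.\ if the generic element \(a\) is a non-zero-divisor in \(B_0\{a\}\) for an \emph{arbitrary} \(\Tc\)-algebra \(B_0\). Lemma~\ref{lem:quasireg_Fermat}, which you cite for regularity, only treats free algebras over \(T_0\), so it does not literally cover this case; the uniqueness of the Fermat--Hadamard quotient in free extensions is a genuine (and known) fact about Fermat theories going back to Dubuc--Kock \cite{dubuc19841}, and should be invoked explicitly. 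Relatedly, ``extend \(h\) by the Leibniz rule twisted by the retraction'' is safer phrased as tensoring the \(B_0\)-linear contracting homotopy on the Koszul factor with the identity of \(B\); the naive multiplicative extension produces cross terms that the tensor formulation avoids. With those two repairs the proof is complete and, unlike the paper's, self-contained.
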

	\begin{proof}
		This is a result of Carchedi--Roytenberg \cite[\S 6]{carchedi2012homological}.
	\end{proof}
	\begin{proposition}
		Let \(\Tc\) be a Fermat theory as above.
		The Moore complex functor
		\[
		N:\s\Vect_K\to \Ch_K
		\]
		induces a functor between categories of algebras
		\[
		\N:\s\Tc\Alg\to \dg\Tc\Alg.
		\]
	\end{proposition}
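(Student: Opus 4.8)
The plan is to transport the construction of Proposition~\ref{prop:monoidal_moore} along the forgetful functor. Let $\UU\colon\s\Tc\Alg\to\sCAlg$ be the forgetful functor from simplicial $\Tc$-algebras to simplicial commutative $K$-algebras, obtained by precomposing with the structure map $\ol{\iota}\colon\Com_K\to\Tc$ of the Fermat theory and applying this degreewise. For a simplicial $\Tc$-algebra $\A$ I would define $\N(\A)$ to be the graded-commutative DGA $\N(\UU\A)$ furnished by Proposition~\ref{prop:monoidal_moore} (with multiplication $\wh{\mu}=\N(\mu)\circ\EZ$ and unit the image of $1_\A$), equipped with the $\Tc$-algebra structure on its degree-zero part coming from $\A_0$; note that $\N(\A)_0=N(\UU\A)_0=\A_0$, and $\A_0$ is a $\Tc$-algebra because $\A$ is a simplicial object of $\Tc\Alg$. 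On a morphism $f\colon\A\to\B$ of simplicial $\Tc$-algebras I would set $\N(f)=\N(\UU f)$. In particular the underlying DGCA of $\N(\A)$ is $\N$ of Proposition~\ref{prop:monoidal_moore} applied to $\UU\A$, so the forgetful functors fit into a commuting square.

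It then remains to check two things: that $\N(\A)$ really does lie in $\dg\Tc\Alg$, and that $\N(f)$ is a morphism there. For the first, I would verify that the commutative ring underlying the $\Tc$-algebra $\A_0$ coincides with the degree-zero part of the DGA $\N(\A)$: in bidegree $(0,0)$ the Eilenberg--Zilber map $\EZ\colon\N_0(\A)\otimes\N_0(\A)\to\N_0(\A\otimes\A)$ is the identity on $\A_0\otimes\A_0$, and $\N_0(\mu)=\mu_0$ is the level-zero multiplication of $\UU\A$, so $\wh{\mu}$ in degree zero is precisely the multiplication of $\A_0$; likewise the unit of $\N(\A)$ in degree zero is the image of $1_\A$, namely the unit of $\A_0$. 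Hence the degree-zero part of $\N(\A)$, with the $\Tc$-action inherited from $\A_0$, refines its commutative structure, which is exactly the condition to be an object of $\dg\Tc\Alg$. For the second, $\UU f$ is a morphism of simplicial commutative algebras, so $\N(\UU f)$ is a DGA morphism by Proposition~\ref{prop:monoidal_moore}, and its degree-zero component is $f_0\colon\A_0\to\B_0$, which is a $\Tc$-homomorphism since it is the value at $[0]$ of the natural transformation $f$ of functors $\Delta^{\op}\to\Tc\Alg$. Functoriality of $\N$ on $\s\Tc\Alg$ (that is, $\N(\id_\A)=\id$ and $\N(g\circ f)=\N(g)\circ\N(f)$) is then inherited from functoriality of $\N$ on $\sCAlg$ and of $\UU$.

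I do not expect a genuine obstacle here: the proposition is essentially a bookkeeping statement promoting Proposition~\ref{prop:monoidal_moore} through the forgetful functor $\Tc\Alg\to\Com_K\Alg$, and the one point that warrants an explicit word is the degree-zero compatibility, which reduces to the Eilenberg--Zilber map being the identity in bidegree $(0,0)$.
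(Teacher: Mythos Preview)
Your proposal is correct and follows essentially the same approach as the paper: invoke Proposition~\ref{prop:monoidal_moore} for the DGA structure in positive degrees, and observe that in degree~\(0\) the functor \(N\) is the identity so the \(\Tc\)-algebra structure on \(\A_0\) transfers directly. Your write-up simply makes explicit the compatibility check (that \(\EZ\) is the identity in bidegree \((0,0)\) and that \(f_0\) is a \(\Tc\)-homomorphism) which the paper leaves as ``evident.''
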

	\begin{proof}
		In positive degrees, this result follows from Proposition \ref{prop:monoidal_moore}. In degree \(0\) the functor \(N\) induces an isomorphism and the structure of a \(\Tc\)-algebra is transferred in an evident way.
	\end{proof}
	\begin{proposition}
		The functor \(\N\) has a left adjoint \(\Qc.\) Consider cofibrantly generated model structures of Propositions \ref{prop:model_struct_on_sFermat} and \ref{prop:model_struct_on_DGFermat} on \(\dg\Tc\Alg\) and \(\s\Tc\Alg.\) Then the functor~\(\Qc\) carries cellular objects in \(\DGCAlg\) into cellular (and free) objects in \(\sCAlg.\)
	\end{proposition}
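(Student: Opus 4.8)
The plan is to follow the proof of Proposition~\ref{prop:existence_and_freeness} almost line by line, the only genuinely new feature being the interaction with the \(\Tc\)-algebra structure in simplicial degree zero. I would first construct \(\Qc\) by hand. Writing \(\UU\colon\dg\Tc\Alg\to\Ch_K\) for the forgetful functor and \(\FF\) for the free \(\Tc\)-algebra functor applied degreewise (as in Proposition~\ref{prop:model_struct_on_sFermat}), set, for \(\A\in\dg\Tc\Alg\),
\[
\Qc\A=\FF(\Gamma\UU\A)/\II(\A),
\]
where \(\Gamma\UU\A\) is the simplicial vector space attached to \(\UU\A\) by the non-monoidal Dold--Kan correspondence and \(\II(\A)\) is the simplicial ideal generated by the multiplication relations \(\wh\mu(\Gamma x,\Gamma y)-\Gamma(xy)\), \(x,y\in\UU\A\), together with the \(\Tc\)-structure relations \(\Gamma(\omega(x_1,\dots,x_m))-\omega(\Gamma x_1,\dots,\Gamma x_m)\), \(\omega\in\Tc(m,1)\), \(x_1,\dots,x_m\in\A_0\). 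By \cite[Proposition 1.2]{dubuc19841} a ring-theoretic ideal in a \(\Tc\)-algebra is automatically a \(\Tc\)-ideal, so this quotient is well defined in each degree. As in Proposition~\ref{prop:existence_and_freeness}, a morphism \(\Qc\A\to\B\) in \(\s\Tc\Alg\) then unwinds, via freeness of \(\FF\) and the Dold--Kan adjunction, into a \(K\)-linear chain map \(\UU\A\to N\UU\B\) that is multiplicative (from the multiplication relations) and a \(\Tc\)-homomorphism in degree zero (from the \(\Tc\)-structure relations), i.e.\ into a morphism \(\A\to\N\B\) of \(\dg\Tc\Alg\); this produces the adjunction \(\Qc\dashv\N\), with unit \(\beta\colon x\mapsto\Gamma x+\II(\A)\).

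The one point where the argument genuinely departs from the commutative case is that \(\Gamma\) is only \(K\)-linear and not a morphism of \(\Tc\)-algebras, so the relations \(\Gamma(\omega(\vec x))-\omega(\Gamma\vec x)\) — the case \(m=0\) forcing \(\Gamma\) to be unital and to fix the constants — must be imposed explicitly. One then checks that modding out by them collapses \((\Qc\A)_0\) back to \(\A_0\) with its prescribed \(\Tc\)-structure; that the multiplication relations \(\wh\mu(\Gamma a,\Gamma x)-\Gamma(ax)\) with \(a\in\A_0\) recover the \(\A_0\)-module structure, hence \(T_0\)-linearity, in positive degrees; and that, \(\II(\A)\) being a \emph{simplicial} ideal, the degeneracies of the degree-zero relations are exactly what forces the structure maps \(s_i\colon(\Qc\A)_0\to(\Qc\A)_n\) to be \(\Tc\)-homomorphisms. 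In positive simplicial degrees no \(\Tc\)-operation beyond the commutative ones intervenes, and there the argument of Proposition~\ref{prop:existence_and_freeness} applies verbatim.

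For the cellularity assertion, \(\Qc\) is a left adjoint, so it preserves pushouts and transfinite compositions, and it suffices to compute it on the generating cofibrations \(\FF^g(S^{k-1})\hookrightarrow\FF^g(D^k)\) of Proposition~\ref{prop:model_struct_on_DGFermat}. In positive degrees \(\FF^g\) is just the graded-symmetric \(T_0\)-algebra functor, so there the computation in the proof of Proposition~\ref{prop:existence_and_freeness} (with \(\FF\) in place of \(\SS\)) gives \(\Qc\FF^g(S^{k-1})\cong\FF(\ol{K}[\mc S^{k-1}])\), while in degree zero \(\Qc\) sends a \(\Tc\)-algebra concentrated there to its constant simplicial object; combining the two identifies
\[
\Qc\FF^g(S^{k-1})\longrightarrow\Qc\FF^g(D^k)\quad\text{with}\quad\FF(\ol{K}[\mc S^{k-1}])\longrightarrow\FF(\ol{K}[\mc D^{k}]),
\]
which is a generating cofibration of \(\s\Tc\Alg\). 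Hence a cellular \(\A\) is carried to a transfinite composition of cobase changes of generating cofibrations, i.e.\ to a cellular object, and degreewise freeness follows as for commutative algebras: each generating cofibration adjoins free \(\Tc\)-generators in every simplicial degree (the inclusion \(\ol{K}[\mc S^{k-1}]\hookrightarrow\ol{K}[\mc D^{k}]\) of simplicial vector spaces being split in each degree), so a cobase change along a map out of a free algebra is degreewise a pushout of the form \(\FF(U)\sqcup_{\FF(W)}\FF(W\oplus C)\cong\FF(U\oplus C)\), again free, and this is preserved under transfinite composition. The step I expect to be the main obstacle is precisely this degree-zero bookkeeping: pinning down \(\II(\A)\) so that \(\Qc\A\) acquires the correct \(\Tc\)-algebra structure in degree zero and the correct simplicial \(\Tc\)-structure above it, and verifying that ``free in each simplicial degree'' survives the cell-attachment operations; everything happening in positive degrees is the commutative statement already proved in Section~\ref{sec:DK_com}.
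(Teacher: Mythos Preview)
Your proposal is correct and follows essentially the same route as the paper: define \(\Qc\A=\FF\Gamma\UU\A/\II(\A)\) with \(\II(\A)\) generated by the relations \(\wh f(\Gamma x_1,\ldots,\Gamma x_n)-\Gamma f(x_1,\ldots,x_n)\) for all operations \(f\) that transfer along \(\N\) (all \(\Tc\)-operations in degree~\(0\), multiplications in positive degrees), verify the adjunction via the free/forgetful and Dold--Kan bijections, and then reduce cellularity to the computation on generating cofibrations exactly as in Proposition~\ref{prop:existence_and_freeness}. The paper's own proof is in fact terser than yours---it writes down \(\II(\A)\) in one line and then dispatches the cellular/free claim with ``carries over mutatis mutandis''---so your degree-zero bookkeeping and the explicit freeness-under-pushout argument are elaborations rather than departures.
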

	\begin{proof}
		If \(\A\) is a dg-algebra over \(\Tc.\) If \(x\in \A_p,\) then \(\Gamma x\in \Gamma\UU\A\sse \FF\Gamma\UU\A\) is the element corresponding to \(x\) under the canonical isomorphism \(N\Gamma\UU\A\cong\UU\A.\) Clearly for any object \(\B\) in \(\s\Tc\Alg\) there is a bijection between morphisms \(\phi:\A\to \N\B\) in \(Ch_K\) and morphisms \(\theta:\FF\Gamma\UU\A\to \B\) in \(\s\Tc\Alg\) such that
		\[
		\theta(\Gamma x)=\phi(x),\quad \text{for all } x\in\A.
		\]
		Define 
		\begin{equation}\label{eq:functor_Q_over_T}
			\Qc\A=\FF\Gamma\UU\A/\II(\A).
		\end{equation}
		Here \(\II(\A)\) is the simplicial ideal generated defined via the following formula.
		\[\II(\A):=(\wh{f}(\Gamma x_1,\ldots, \Gamma x_n)-\Gamma f(x_1,\ldots,x_n)\mid x_1,\ldots,x_n\in \A)\] for all operations \(f\) of \(\Tc\) that can be transferred along \(\N\) (i.e., all operations in degree \(0\) and multiplications in positive degrees). Observe that the adjunction morphism~\({\beta:1_{\dg\Tc\Alg}\Rightarrow\N\Qc}\) is given by
		\[
		\A\xrightarrow{\beta} \N\Qc\A,\quad x\xmapsto{\beta}\Gamma x+\II(\A).
		\]
		The proof of the fact that cellular objects are mapped to cellular (and free) objects by \(\Qc\) carries over mutatis mutandis from the proof of Proposition \ref{prop:existence_and_freeness}.
	\end{proof}
	\begin{remark}
		Previous results hold without modifications for any (multisorted) Lawvere theory \(\Tc.\) The next result, however, uses the Fermat property in an essential way.
	\end{remark}
	\begin{proposition}\label{prop:homology_commutes_with_completion}
		Let \(V\) be a simplicial vector space over \(K\). Then there is an isomorphism induced by the completion functor (Construction \ref{con:completion}):
		\[
		T_0\otimes_K\pi_{>0}(\SS V)=\pi_{>0}(\SS T\otimes_K V)\xrightarrow{\cong} \pi_{>0}(\FF V).
		\]
		In dimension \(0\) we have \(\pi_0(\FF V)=\FF\pi_0(V).\)
	\end{proposition}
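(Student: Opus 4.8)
The plan is to factor $\FF$ through the $\Tc$-completion of Construction~\ref{con:completion}, so that the asserted map becomes the comparison between the free commutative algebra and the free $\Tc$-algebra on $V$, and then to compare the two sides through their augmentation-ideal filtrations, reducing to the commutative computation of Proposition~\ref{prop:homology_commutes} together with the Fermat-theoretic input of Lemma~\ref{lem:quasireg_Fermat} and the connectivity estimate of Theorem~\ref{thm:conn_Fermat}.

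First I would record the formal part. By Construction~\ref{con:completion} and Lemma~\ref{lem:free_for_fermat} one has $\FF=\wh{(-)}\circ\SS$ applied degreewise, where $\SS$ is the free commutative $K$-algebra functor and $\wh{(-)}$ is $\Tc$-completion; hence the completion unit $\SS V\to\wh{\SS V}=\FF V$ together with the canonical map $T_0\to\FF V$ produces a natural map $c_V\colon\SS_{T_0}(T_0\otimes_K V)=T_0\otimes_K\SS V\to\FF V$ of simplicial commutative $T_0$-algebras (using the underlying commutative $T_0$-algebra of $\FF V$), and the assertion is that $c_V$ is a $\pi_{>0}$-isomorphism. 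The statement in degree $0$ is immediate: $\pi_0$ of a simplicial object is the coequalizer of $d_0,d_1$, hence a colimit, and $\FF\colon\Mod_K\to\Tc\Alg$ is a left adjoint carrying constant simplicial objects to constant ones, so $\pi_0(\FF V)=\FF(\pi_0 V)$.

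Next I would reduce to $V$ reduced. The functors $\SS$ and $\FF$ are left Quillen for the model structures of Propositions~\ref{prop:standard_model_struct} and~\ref{prop:model_struct_on_sFermat}, and every simplicial $K$-vector space is cofibrant, so both preserve all weak equivalences; moreover $T_0\otimes_K-$ is exact and commutes with $\pi_*$, and all the functors involved commute with filtered colimits. Hence every term in the statement is homotopy-invariant in $V$, and I may replace $V$ by a weakly equivalent, or filtered-colimit-built, model. Splitting the normalized chains of $V$ into their homology gives $V\simeq(\pi_0 V)_{\mathrm{const}}\oplus V'$ with $V'$ reduced (and, after one more replacement, a direct sum of simplicial spheres, hence cellular); the Künneth formula for simplicial commutative algebras (via Eilenberg--Zilber) and the fact that $\FF$ carries this direct sum to a coproduct of $\Tc$-algebras then reduce the $\pi_{>0}$-assertion to the case where $V$ is reduced, in which case $\pi_0(\SS V)=K$ and $\pi_0(\FF V)=T_0$, so that $T_0\otimes_K(-)$ is the correct base change.

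Now assume $V$ reduced. Then $\FF V$ is a reduced cellular simplicial $\Tc$-algebra and $T_0\otimes_K\SS V$ a reduced cellular simplicial commutative $T_0$-algebra, each augmented over $T_0$, and $c_V$ respects the $I$-adic filtrations $\ol{\FF V}^r$ and $T_0\otimes_K\ol{\SS V}^r$, where $\ol{\SS V}^r=\bigoplus_{n\ge r}\mathrm{Sym}^n_K V$. The crucial point is that $c_V$ is an isomorphism on associated gradeds: in each simplicial degree $\ol{\SS V}^r/\ol{\SS V}^{r+1}=\mathrm{Sym}^r_K V$, while the Fermat property --- Taylor's theorem with remainder, i.e.\ the Hadamard/Koszul computation underlying Lemma~\ref{lem:quasireg_Fermat} --- shows that $\ol{\FF V}^r/\ol{\FF V}^{r+1}\cong\mathrm{Sym}^r_{T_0}(T_0\otimes_K V)$, first for finitely generated $V$ (where $\FF V=\y\AA^n_\Tc$ and the augmentation ideal at the origin is $(x_1,\dots,x_n)$ by Hadamard's lemma), and then in general since augmentation ideals, their powers, and $\mathrm{Sym}^r$ all commute with the filtered colimits of Lemma~\ref{lem:free_for_fermat}; under these identifications $c_V$ is the identity. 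By Theorem~\ref{thm:conn_Fermat}, applied both to $\Tc$ and to $\Tc=\Com_K$, the ideals $\ol{\FF V}^r$ and $\ol{\SS V}^r$ are $(r-1)$-connected, so in each total degree only finitely many filtration steps contribute; the $I$-adic spectral sequences $\pi_*(\gr_r)\Rightarrow\pi_*$ therefore converge, and a map of such spectral sequences that is an isomorphism on $E_1$ (which it is, by the associated-graded computation together with exactness of $T_0\otimes_K-$; cf.\ Proposition~\ref{prop:homology_commutes}) is an isomorphism on abutments. Hence $c_V$ is a $\pi_*$-isomorphism, which in positive degrees is the assertion.

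The step I expect to be the main obstacle is the associated-graded computation of the last paragraph: identifying $\ol{\FF V}^r/\ol{\FF V}^{r+1}$ with the polynomial piece $\mathrm{Sym}^r_{T_0}(T_0\otimes_K V)$ is precisely where the Fermat hypothesis is used in an essential way, and is the only ingredient, apart from model-categorical formalities, that distinguishes this statement from Proposition~\ref{prop:homology_commutes}. A secondary point requiring care is the reduction to reduced $V$: one must verify that the splitting interacts correctly with $\FF$ --- the discrete degree-$0$ part being absorbed by base change along $T_0\to\FF(\pi_0 V)=\pi_0(\FF V)$ --- which rests on flatness of free $\Tc$-algebras over $T_0$ and on the fact that a coproduct with a cofibrant object preserves weak equivalences between cofibrant objects.
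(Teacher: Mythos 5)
Your proposal takes a genuinely different route from the paper's. The paper reduces, via cell attachments, to the single sphere \(V=\Gamma\ol{K}[\mc{S}^n]\) and then computes directly: it describes \(\ker d_i\) inside \(\FF\Gamma\ol{K}[\mc{S}^n]\) by Hadamard's lemma and shows, by a Taylor-expansion-with-remainder argument, that a \(K\)-basis of polynomial representatives for \(\pi_k(\SS\Gamma\ol{K}[\mc{S}^n])\) remains a \(T_0\)-basis after completion. You instead filter \(\FF V\) and \(T_0\otimes_K\SS V\) by powers of the augmentation ideal, identify the associated gradeds with \(T_0\otimes_K\mathrm{Sym}^r_K V\) using the same Hadamard/Fermat input (the content of Lemma \ref{lem:quasireg_Fermat}), and pass to the abutment using the connectivity estimate of Theorem \ref{thm:conn_Fermat}. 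For reduced \(V\) this is sound, and it is in fact the same mechanism the paper itself deploys in the proof of Theorem \ref{thm:eq_on_free} (via \cite[Lemma 3.25]{gwilliam2018enhancing}); it buys a cleaner argument that avoids explicit face-map computations, at the cost of invoking the connectivity theorem where the paper's sphere computation does not need it. The step you single out as the main obstacle --- the identification \(\ol{\FF W}^r/\ol{\FF W}^{r+1}\cong T_0\otimes_K\mathrm{Sym}^r_K W\) --- is indeed exactly the iterated-Hadamard statement and goes through.

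The genuine gap is the step you call secondary: the reduction to reduced \(V\). Writing \(V\simeq(\pi_0V)_{\mathrm{const}}\oplus V'\) gives \(\FF V\simeq\FF(\pi_0V)\otimes_{\infty}\FF(V')\), a coproduct of \(\Tc\)-algebras, and there is no K\"unneth formula for \(\otimes_{\infty}\) to quote: \(\FF(\pi_0V)\otimes_{\infty}B\) is not \(\FF(\pi_0V)\otimes_{T_0}B\), and computing \(\pi_*\) of such a coproduct needs a flatness input that is a hard theorem already for \(\CI\)-rings and is not among the paper's hypotheses. More seriously, even granting such a formula, absorbing the degree-zero part ``by base change along \(T_0\to\FF(\pi_0V)\)'' yields \(\pi_{>0}(\FF V)\cong\FF(\pi_0V)\otimes_{T_0}\pi_{>0}(\FF V')\), a module over \(\FF(\pi_0V)\), whereas the left-hand side of the statement, \(T_0\otimes_K\pi_{>0}(\SS V)\), is a module over \(T_0\otimes_K\SS(\pi_0V)\); since \(\FF(\pi_0V)\neq T_0\otimes_K\SS(\pi_0V)\) in general (e.g.\ \(\CI(\R)\neq\R[x]\)), your reduction does not return the asserted isomorphism but a corrected, \(\pi_0\)-relative version of it. (To be fair, the paper's own reduction ``we only need to show the result for \(V=\ol{K}[\mc{S}^n]\)'' glosses over the same point; but as a proof of the literal statement, your argument establishes it only for reduced \(V\).)
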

	\begin{proof}
		First, observe that the equality \(T_0\otimes_K\pi_*(\SS V)=\pi_*(\SS T\otimes_K V)\) is just a standard result about the flat base change.
		Now, any simplicial vector space can be obtained as a transfinite composition of the cobase changes
		\[\begin{tikzcd}
			\Gamma{\ol{K}[\mc{S}^{n-1}]} && \Gamma{\ol{K}[\mc{D}^n]} \\
			\\
			{V'} && V.
			\arrow[from=1-1, to=1-3]
			\arrow[from=1-1, to=3-1]
			\arrow[from=3-1, to=3-3]
			\arrow[from=1-3, to=3-3]
			\arrow["\lrcorner"{anchor=center, pos=0.125}, draw=none, from=3-3, to=1-1]
		\end{tikzcd}\]
		The left adjoint functor \(\SS\) preserves colimits and thus \(\SS(V)\) is a transfinite composition of cobase changes of the form:
		\[\begin{tikzcd}
			\SS\Gamma{\ol{K}[\mc{S}^{n-1}]} && \SS\Gamma{\ol{K}[\mc{D}^n]} \\
			\\
			{\SS V'} && \SS V.
			\arrow[from=1-1, to=1-3]
			\arrow[from=1-1, to=3-1]
			\arrow[from=3-1, to=3-3]
			\arrow[from=1-3, to=3-3]
			\arrow["\lrcorner"{anchor=center, pos=0.125}, draw=none, from=3-3, to=1-1]
		\end{tikzcd}\]
		Since the completion functor is also a left adjoint, the algebra \(\FF(V)\) is a composition of cobase changes:
		\[\begin{tikzcd}
			\FF\Gamma{\ol{K}[\mc{S}^{n-1}]} && \FF\Gamma{\ol{K}[\mc{D}^n]} \\
			\\
			{\FF V'} && {\FF V.}
			\arrow[from=1-1, to=1-3]
			\arrow[from=1-1, to=3-1]
			\arrow[from=3-1, to=3-3]
			\arrow[from=1-3, to=3-3]
			\arrow["\lrcorner"{anchor=center, pos=0.125}, draw=none, from=3-3, to=1-1]
		\end{tikzcd}\]
		Thus to show the desired isomorphism we only need to show the result for \(V=\ol{K}[\mc{S}^n],\) since \(\FF\Gamma{\ol{K}[\mc{D}^n]}\) is acyclic. By Proposition \ref{prop:homology_commutes} we have an isomorphism
		\begin{equation}\label{eq:iso_for_sphere}
			\pi(\SS\Gamma\ol{K}[\mc{S}^n])\cong \SS^g(\ol{K}[\mc{S}^n]).
		\end{equation}
		Moreover, in each dimension we can explicitly describe \(\ker d_i\) for \(\SS\Gamma\ol{K}[\mc{S}^n]\) as well as for \(\FF\Gamma\ol{K}[\mc{S}^n].\) Let \(x_n\) be the generator of \(\ol{K}[\mc{S}^n].\) By Hadamard's lemma (see Definition \ref{def:fermat}) the ideal \(\ker d_i^k\) is given by
		\begin{gather}\label{eq:kernel_ideal}
			\begin{split}
				\ker d_i^k=\left(s_{j_{k-n}}\ldots s_{j_t=i}\ldots s_{j_1}x_n-s_{j_{k-n}}\ldots s_{j_t=i-1}\ldots s_{j_1}x_n\mid j_{k-n}>\ldots>j_1,\; j_{t-1}<i-1\right)+\\+\left(s_{j_{k-n}}\ldots s_{j_1}x_n\mid i,i-1\notin \{j_1,\ldots, j_{k-n}\}\right).
			\end{split}
		\end{gather}
		The intersection of such ideals is generated by products of polynomials described in~\eqref{eq:kernel_ideal}. The isomorphism \eqref{eq:iso_for_sphere} guarantees that the quotient
		\begin{equation}\label{eq:quotient}
			\pi_k(\SS\Gamma\ol{K}[\mc{S}^n])=\frac{\bigcap\limits_{i=0}^k \ker d_i^k}{d_0^{k+1}\bigcap\limits_{i=1}^{k+1}\ker d_i^{k+1}}
		\end{equation}
		is a finite-dimensional vector space over \(K.\) In particular, this quotient has a finite basis consisting of polynomials \(\{p_1,\ldots,p_l\}.\) Upon completion, this collection of polynomials still generates the ideal \(\bigcap\limits_{i=0}^k\ker d_i^k\) together with \(d_0^{k+1}\bigcap\limits_{i=1}^{k+1}\ker d_i^{k+1}.\) However, we want to show that it is an additive basis of the quotient, but now over \(T_0\). Let \(f\in \bigcap\limits_{i=0}^k\ker d_i^{k+1}\) be an arbitrary element \(f=\sum_{i=1}^l \lambda_i p_i+\gamma,\) where \(\gamma\in d_0^{k+1}\bigcap\limits_{i=1}^{k+1}\ker d_i^{k+1}.\) Denote by \(N\) the maximal total degree of the collection \(\{p_1,\ldots,p_l\}.\) For a vector of non-negative integers \(s=(s_1,\ldots,s_l)\) we denote by \(p^s\) the corresponding monomial \(\prod_{i=1}^l p_i^{s_i}.\) For such a vector \(s\) we denote by \(|s|\) the sum of its components. Now we take a Taylor expansion of \(\lambda_i\)'s up to the total degree \(N+1.\) We get
		\[
		\sum_{|s|\le N+1} \left[\lambda^{(s)}_i p^s+\xi_i p_i \right] + \gamma.
		\]
		Here \(\xi_i\) is a remainder of the Taylor series of \(\lambda_i.\) In particular, \(\xi_i p_i\) is divisible by a polynomial of total degree \(>N\) so it belongs to \(d_0^{k+1}\bigcap\limits_{i=1}^{k+1}\ker d_i^{k+1}.\) Thus we have shown that \(p_i\)'s span the quotient \eqref{eq:quotient} and they are clearly linearly independent over \(T_0.\) Thus we have an isomorphism:
		\[
		\pi_k(\SS\Gamma\ol{K}[\mc{S}^n])\cong \pi_k(\FF\Gamma\ol{K}[\mc{S}^n]),\quad k>0
		\]
		induced by completion.
		\par To resolve the case \(k=0\) we observe that the statement is trivially true for \(\Gamma\ol{K}[\mc{S}^n]\) with \(n>0\). In the case \(n=0\) we have an identity \(\pi_0(\Gamma\ol{K}[\mc{S}^0])=\Gamma\ol{K}[\mc{S}^0]\) yielding
		\[
		\pi_0(\FF\Gamma\ol{K}[\mc{S}^0])=\FF\pi_0(\Gamma\ol{K}[\mc{S}^0]).
		\]
		Thus the statement holds for all simplicial vector spaces.
	\end{proof}
	\begin{corollary}
		Let \(V\) be a chain complex over \(K.\) Then we have two isomorphisms
		\[
		\FF^g H(V)\xrightarrow{a} H(\FF^g V)\xrightarrow{b}\pi(\FF\Gamma V).
		\]
	\end{corollary}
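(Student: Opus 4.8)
The plan is to transport the commutative statement, Proposition~\ref{prop:homology_commutes}, along the \(\Tc\)-completion functor, using Proposition~\ref{prop:homology_commutes_with_completion} exactly as in the proof of the latter. For the map \(b\): since \(\FF\) is the levelwise \(\Tc\)-completion of the symmetric algebra functor \(\SS\) (Construction~\ref{con:completion}), the simplicial \(\Tc\)-algebra \(\FF\Gamma V\) is the levelwise completion of the simplicial commutative algebra \(\SS\Gamma V\), so Proposition~\ref{prop:homology_commutes_with_completion} applied to \(\Gamma V\) gives \(\pi_{>0}(\FF\Gamma V)\cong T_0\otimes_K\pi_{>0}(\SS\Gamma V)\) and \(\pi_0(\FF\Gamma V)\cong\FF\pi_0(\Gamma V)\). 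Composing the first with the map \(b\) of Proposition~\ref{prop:homology_commutes}, and using that every \(K\)-module is flat so that flat base change along \(K\to T_0\) commutes with homology, identifies the target with \(H_{>0}(\FF^gV)\); in degree \(0\) one has \(\pi_0(\FF\Gamma V)=\FF\pi_0(\Gamma V)=\FF(H_0V)=H_0(\FF^gV)\), the last equality because \(H_0(\FF^gV)=\FF(V_0)/(dV_1)\) and \(\FF\), being a left adjoint, sends the pushout presenting \(V_0/dV_1\) to the one presenting this quotient. The map \(a\) is then obtained from the same flat base change applied to the map \(a\) of Proposition~\ref{prop:homology_commutes}, together with the degree-\(0\) identification just made.

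The step I expect to cost the most effort is the bookkeeping needed to make the flat base change step rigorous: one must pin down the underlying graded object and the differential of \(\FF^gV\) and check that, in positive degrees, it is genuinely obtained from \(\SS^gV\) by extension of scalars along \(K\to T_0\), with the degree-\(0\) part \(\FF(V_0)\) and the monomials mixing a degree-\(0\) generator with positive-degree generators correctly accounted for.

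To avoid this, I would instead imitate the proof of Proposition~\ref{prop:homology_commutes_with_completion}. All four functors \(\FF^g\), \(\SS^g\), \(\FF\Gamma(-)\) and \(\SS\Gamma(-)\) are left Quillen, hence preserve quasi-isomorphisms, so one may replace \(V\) by its homology and assume the differential is zero. Then \(V\) is a transfinite composition of cobase changes of the cells \(S^{k-1}\hookrightarrow D^k\), the functors preserve these colimits, and each \(D^k\) with \(k\ge 1\) is acyclic, hence sent to an object whose homology (resp.\ homotopy) equals \(T_0\) in degree \(0\); this reduces the claim to \(V=S^n\). For \(n>0\) the cell \(S^n\) has zero differential, so \(a\) is the identity and \(b\) is exactly the computation \(\pi(\FF\Gamma\ol{K}[\mc{S}^n])\cong T_0\otimes_K\SS^g(\ol{K}[\mc{S}^n])\) carried out in the proof of Proposition~\ref{prop:homology_commutes_with_completion}, while the remaining degree-\(0\) cell \(S^{-1}\hookrightarrow D^0\) is dealt with by the identity \(\pi_0(\FF W)=\FF\pi_0(W)\) of the same proposition.
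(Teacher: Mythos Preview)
Your proposal is correct, and in substance it matches the paper's proof. The paper treats \(a\) and \(b\) separately: for \(a\) it uses exactly the formality step you invoke at the start of your second approach---over a field \(V\simeq H(V)\) and \(\FF^g\) preserves homotopy equivalences, so \(H(\FF^g V)\cong H(\FF^g H(V))=\FF^g H(V)\)---and then stops, without any cell decomposition. For \(b\) the paper simply cites Proposition~\ref{prop:homology_commutes} together with Proposition~\ref{prop:homology_commutes_with_completion}; the flat-base-change bookkeeping you flag (identifying \(T_0\otimes_K H_{>0}(\SS^g V)\) with \(H_{>0}(\FF^g V)\), and handling degree~\(0\) via \(\FF\)) is precisely the implicit content of that citation. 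So your first approach is the unpacked version of what the paper writes, and your second approach contains the paper's argument for \(a\) but then does more work than needed: once the differential is zero, \(a\) is immediate and only \(b\) remains, which is already the content of Proposition~\ref{prop:homology_commutes_with_completion}. The cell-by-cell reduction is not wrong, but it re-proves that proposition rather than invoking it.
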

	\begin{proof}
		Since over \(K\) any chain complex is equivalent to its homology and \(\FF^g\) preserves homotopy equivalences we have a sequence of equivalences
		\[
		H(\FF^g H(V))=\FF^g H(V)\simeq \FF^g V\simeq H(\FF^g V).
		\]
		Thus \(a\) is an isomorphism. To see that \(b\) is an isomorphism apply Proposition \ref{prop:homology_commutes} and Proposition \ref{prop:homology_commutes_with_completion}.
	\end{proof}
	\begin{theorem}
		If \(\A\) is a cellular object in \(\dg\Tc\Alg\) then the adjunction morphism~\({\beta:\A\to \N\Qc\A}\) is an equivalence. 
	\end{theorem}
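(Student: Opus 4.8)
The plan is to transcribe the proof of Theorem~\ref{thm:eq_on_free} almost verbatim, replacing the graded symmetric algebra functor \(\SS^{g}\), the symmetric algebra functor \(\SS\), and Proposition~\ref{prop:homology_commutes} by their Fermat-theoretic counterparts \(\FF^{g}\), \(\FF\), and the preceding Corollary (giving \(\FF^{g}H(V)\xrightarrow{\cong}\pi(\FF\Gamma V)\)). First I would equip the cellular \(\A\) in \(\dg\Tc\Alg\) and the cellular simplicial algebra \(\Qc\A\) with their canonical augmentations over \(T_0\) and form the augmentation ideals \(\ol{\A}\), \(\ol{\Qc\A}\) as in Definition~\ref{def:aug_ideal_Fermat}; this is legitimate because \(\Qc\) sends cellular objects to cellular (and degreewise free) objects. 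As in the commutative case one checks \(\ol{\Qc\A}^{p}\cdot\ol{\Qc\A}^{q}\sse\ol{\Qc\A}^{p+q}\) and \(\beta\,\ol{\A}^{r}\sse\N\,\ol{\Qc\A}^{r}\), so by exactness of \(\N\) the map \(\beta\) descends to \(\gr\beta\colon\gr\A\to\N\gr\Qc\A\).

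The next step is to identify the two associated graded objects. Forgetting the differential, a cellular \(\A\) is \(\FF^{g}(V)\) for a graded \(T_0\)-module \(V\) (free in positive degrees, with degree-\(0\) part recording the free \(\Tc\)-generators), and the Fermat (Hadamard) property — used exactly as in Lemma~\ref{lem:quasireg_Fermat} — gives \(\gr\A\cong\FF^{g}(\ol{\A}/\ol{\A}^{2})\) with \(\ol{\A}/\ol{\A}^{2}\cong V\), while freeness of \(\Qc\A\) gives \(\gr\Qc\A\cong\FF(\ol{\Qc\A}/\ol{\Qc\A}^{2})\). From \(\Qc\A=\FF\Gamma\UU\A/\II(\A)\) one sees, exactly as in Theorem~\ref{thm:eq_on_free}, that \(\Qc\A\) is generated by \(V\) and its degeneracies, so that after applying \(\N\) only \(V\) survives and \(\gr_{1}\beta\) is the canonical isomorphism \(\ol{\A}/\ol{\A}^{2}\cong V\cong\ol{\N\Qc\A}/\ol{\N\Qc\A}^{2}\). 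Hence \(\gr\beta\) has the shape \(\FF^{g}(V)\to\N\FF\Gamma V\), which is a weak equivalence by the preceding Corollary.

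To promote this graded equivalence to an honest one I would repeat the cofibrant-sequences argument from the proof of Theorem~\ref{thm:eq_on_free}. When \(\A\) is connected (\(\A_0\cong T_0\)), the Curtis-type connectivity Theorem~\ref{thm:conn_Fermat}, applied to the reduced cellular simplicial algebra \(\Qc\A\), shows \(\ol{\Qc\A}^{r}\) is \((r-1)\)-connected, while \((\ol{\A}^{r})_{q}=0\) once \(r\) is large relative to \(q\); thus the filtrations \(\ul{\A}\) and \(\ul{\N\Qc\A}\) by powers of the augmentation ideals are complete and \cite[Lemma 3.25]{gwilliam2018enhancing} upgrades ``\(\gr\beta\) is a weak equivalence'' to ``\(\beta\colon\A\to\N\Qc\A\) is a weak equivalence''. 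For general \(\A\) I would use the short exact sequence \(0\to\ol{\A}_0\to\A\to\A/\ol{\A}_0\to0\): the quotient \(\A/\ol{\A}_0\) is connected, \(\Qc\) is a left adjoint so \(\Qc(\A/\ol{\A}_0)=\Qc\A/\ol{\Qc\A}_0\), and a direct computation from the definition of \(\Qc\) gives \(\ol{\N\Qc\A}_0\cong\ol{\Qc\A}_0\cong\ol{\A}_0\) (both \(\Qc\) and \(\N\) restrict to the identity on the degree-\(0\) \(\Tc\)-algebra). Comparing the resulting map of short exact sequences then yields an isomorphism on homology, which finishes the argument.

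The step I expect to be the main obstacle is the identification \(\gr\A\cong\FF^{g}(\ol{\A}/\ol{\A}^{2})\) in degree \(0\): unlike a polynomial algebra, a free \(\Tc\)-algebra is not literally graded by monomial length, so one must argue — as in Lemma~\ref{lem:quasireg_Fermat} and Proposition~\ref{prop:homology_commutes_with_completion} — that the \(\ol{\A}\)-adic filtration has as associated graded the free graded-commutative \(T_0\)-algebra on the cotangent module; this is precisely where the Fermat property is indispensable, and it is also why one first reduces to the connected case (where no degree-\(0\) generators occur) before invoking the Corollary. A second, smaller point to check is that \(\Qc\) commutes with the quotient \(\A\mapsto\A/\ol{\A}_0\), but this is formal from \(\Qc\) being a left adjoint together with its explicit description in degree \(0\). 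Everything else is a transcription of the characteristic-zero commutative-algebra proof.
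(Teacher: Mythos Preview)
Your proposal is correct and follows exactly the route the paper takes: the paper's own proof consists of the single sentence ``The proof is identical to the proof of Theorem~\ref{thm:eq_on_free},'' and you have spelled out precisely what that transcription entails, including the appeal to Theorem~\ref{thm:conn_Fermat} for connectivity, the Corollary replacing Proposition~\ref{prop:homology_commutes}, and the reduction from the general to the connected case. Your flagged concern about \(\gr\A\) in degree~\(0\) is well placed, and your observation that the reduction to the connected case (where \(V_0=0\) and hence \(\FF^{g}(V)=\SS^{g}_{T_0}(V)\)) sidesteps it is exactly why the argument goes through without further modification.
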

	\begin{proof}
		The proof is identical to the proof of Theorem \ref{thm:eq_on_free}.
	\end{proof}
	\begin{theorem}\label{thm:DK_for_Fermat}
		The adjoint pair
		\[\begin{tikzcd}
			\s\Tc\Alg && \dg\Tc\Alg
			\arrow[""{name=0, anchor=center, inner sep=0}, "\N"', shift right=2, from=1-1, to=1-3]
			\arrow[""{name=1, anchor=center, inner sep=0}, "\Qc"', shift right=2, from=1-3, to=1-1]
			\arrow["\dashv"{anchor=center, rotate=-90}, draw=none, from=1, to=0]
		\end{tikzcd}\]
		is a Quillen equivalence.
	\end{theorem}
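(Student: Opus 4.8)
The plan is to deduce this from the theorem just proved by running the argument of Theorem~\ref{thm:DK_for_Com} essentially verbatim. First I would check that $(\Qc\dashv\N)$ is a Quillen adjunction. The normalized chains functor $N$ is part of the classical Dold--Kan equivalence, so it preserves and reflects quasi-isomorphisms; since the weak equivalences of $\s\Tc\Alg$ and of $\dg\Tc\Alg$ are created by the forgetful functors to simplicial vector spaces, respectively chain complexes (Propositions~\ref{prop:model_struct_on_sFermat} and~\ref{prop:model_struct_on_DGFermat}), the functor $\N$ preserves and reflects weak equivalences. Moreover $N$ sends Kan fibrations of simplicial modules to surjections in positive degrees, so $\N$ preserves fibrations and is right Quillen; equivalently, by the Fermat analogue of Proposition~\ref{prop:existence_and_freeness}, $\Qc$ carries each generating cofibration $\FF^g(S^{k-1})\hookrightarrow\FF^g(D^k)$ isomorphically onto a generating cofibration of $\s\Tc\Alg$, so $\Qc$ is left Quillen.

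Next I would invoke the theorem immediately above, which asserts that the adjunction unit $\beta\colon\A\to\N\Qc\A$ is a weak equivalence whenever $\A$ is cellular in $\dg\Tc\Alg$. Since $\dg\Tc\Alg$ is cofibrantly generated, every cofibrant object is a retract of a cellular one; as $\Qc$ and $\N$ preserve retracts and weak equivalences are closed under retracts, it follows that $\beta_\A$ is a weak equivalence for every cofibrant $\A$. The underlying simplicial abelian group of any simplicial $\Tc$-algebra is a Kan complex, so every object of $\s\Tc\Alg$ is fibrant; hence $\Qc\A$ needs no fibrant replacement and the derived unit $\A\to\N\big((\Qc\A)^{\mathrm{fib}}\big)$ agrees with $\beta_\A$ on cofibrant $\A$. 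Thus the derived unit is a weak equivalence on cofibrant objects, and $\N$ reflects weak equivalences between fibrant objects; by the standard criterion a Quillen pair with these two properties is a Quillen equivalence, which proves the theorem.

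All the genuine work underlying this statement was carried out earlier: the connectivity theorem (Theorem~\ref{thm:conn_Fermat}), the identification of homotopy groups after completion (Proposition~\ref{prop:homology_commutes_with_completion}), and the completeness-of-filtration argument behind Theorem~\ref{thm:eq_on_free}, all of which have already been established for an arbitrary Fermat theory over a field of characteristic zero. Given these, the only point needing a little care here is the reduction from cellular to arbitrary cofibrant objects via retracts, together with the conservativity of $\N$; both are formal, so I anticipate no new obstacle at this final step.
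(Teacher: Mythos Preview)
Your proposal is correct and follows essentially the same approach as the paper: the paper's proof is literally ``The proof is identical to the proof of Theorem~\ref{thm:DK_for_Com},'' and that proof in turn reduces to the retract argument from cellular objects using the preceding theorem that $\beta$ is a weak equivalence on cellular objects. You have supplied more detail than the paper does (the Quillen adjunction verification, fibrancy of all objects, conservativity of $\N$), but the core idea is the same.
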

	\begin{proof}
		The proof is identical to the proof of Theorem \ref{thm:DK_for_Com}.
	\end{proof}
	\section{Three models of affine derived stacks}\label{sec:models}
	Throughout this section \(\Tc\) is a Fermat theory over a field \(K\) of characteristic \(0\) (see Definition \ref{def:Fermat_over_R}).
	\subsection{Simplicial \(\Tc\)-algebras}
	\begin{definition}
		Denote by \(\s\Tc\Alg_r\) the category  of simplicial objects in the category~\(\Tc\Alg_r\) (see Definition \ref{def:res_TAlg}) with an additional assumption that the functor \(\pi_*\) yields \emph{finitely generated} \(\Tc\)-algebras for all objects in this category. 
	\end{definition}
	\begin{construction}\label{con:cov_on_sTAlg}
		A collection \(\{f_i:B_i\to A\vert i\in I\}\) of morphisms in \(\s\Tc\Alg_r^{\op}\) is a covering family of a simplicial \(\Tc\)-algebra \(A\) if 
		\begin{itemize}
			\item \(\{\pi_0(f_i): \pi_0(B_i)\to \pi_0(A)\}\) is a covering family for some coverage on \(\Tc\Alg^{\op}_r\) (see \S\ref{sec:Fermat_examples} for examples);
			\item \(\pi_*(B_i)\xleftarrow{\cong}\pi_*(A)\otimes_{\pi_0(A)}\pi_0(B_i).\)
		\end{itemize}
	\end{construction}
	Let \(\s\Vect_K\) denote the category of simplicial vector spaces over \(K.\)
	\begin{proposition}
		There is a left adjoint \(\s\FF\) of the forgetful functor 
		\[
		\UU:\s\Tc\Alg\to \s\Vect_K
		\]
		such that \[(\s\FF K)_n= \FF(K_n).\]
		Here \(\FF\) is the free \(\Tc\)-algebra functor on a vector space (see Lemma \ref{lem:free_for_fermat}).
	\end{proposition}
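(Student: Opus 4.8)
The plan is to obtain $s\FF$ by applying the free functor $\FF$ of Lemma~\ref{lem:free_for_fermat} degreewise, i.e.\ by postcomposition along the simplex category. Recall that Lemma~\ref{lem:free_for_fermat} (applied with $R=K$) provides an adjunction $\FF:\Mod_K\rightleftarrows\Tc\Alg:\UU$. The first step is the general categorical fact that an adjunction $F\dashv G$ between categories $\Cc$ and $\D$ induces, for every small category $\mathcal{J}$, an adjunction $F_*\dashv G_*$ between the functor categories $[\mathcal{J},\Cc]$ and $[\mathcal{J},\D]$, where $F_*$ and $G_*$ denote postcomposition; the unit and counit are obtained by whiskering the original unit and counit with $\mathcal{J}$, and the triangle identities are checked pointwise in $\mathcal{J}$. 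Taking $\mathcal{J}=\Delta^{\op}$, and recalling that by definition a simplicial $\Tc$-algebra is a simplicial object in $\Tc\Alg$ and that the forgetful functor $\UU:s\Tc\Alg\to s\Vect_K$ is precisely postcomposition with $\UU:\Tc\Alg\to\Mod_K$, we obtain the desired left adjoint $s\FF:=\FF_*$.

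Concretely, for a simplicial vector space $K$ the object $s\FF K$ has $n$-simplices $(s\FF K)_n=\FF(K_n)$ and face and degeneracy maps $\FF(d_i)$ and $\FF(s_i)$; this is a well-defined simplicial $\Tc$-algebra because $\FF$ is a functor, so the simplicial identities for $s\FF K$ follow by applying $\FF$ to those of $K$. The adjunction bijection $\Hom_{s\Tc\Alg}(s\FF K,A)\cong\Hom_{s\Vect_K}(K,\UU A)$ is assembled degreewise from the adjunction $\FF\dashv\UU$, and its naturality in $K$ and $A$ is inherited from naturality in each simplicial degree. This simultaneously gives the claimed formula $(s\FF K)_n=\FF(K_n)$.

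There is essentially no serious obstacle here; the only point requiring (routine) care is to confirm that the forgetful functor appearing in the statement agrees with degreewise application of $\UU$ — which holds because the $\Tc$-algebra structure on a simplicial $\Tc$-algebra is, by definition, carried in each simplicial level — and that $\UU$, being a right adjoint, preserves the pointwise limits defining $s\Vect_K$, so that no compatibility is lost in passing to simplicial objects. Alternatively, one can avoid the abstract argument and verify the universal property of $s\FF K$ directly: a map of simplicial vector spaces $K\to\UU A$ is a family of linear maps $K_n\to\UU A_n$ compatible with the simplicial structure, which under $\FF\dashv\UU$ corresponds to a compatible family of $\Tc$-homomorphisms $\FF(K_n)\to A_n$, i.e.\ a morphism $s\FF K\to A$ in $s\Tc\Alg$.
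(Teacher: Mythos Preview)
Your argument is correct. The paper states this proposition without proof, treating it as a standard fact; your proposal supplies precisely the expected justification, namely that an adjunction $\FF\dashv\UU$ between $\Mod_K$ and $\Tc\Alg$ lifts to an adjunction between the corresponding categories of simplicial objects by applying both functors levelwise, with the unit and counit whiskered along $\Delta^{\op}$. There is nothing to compare here, since the paper gives no argument of its own; your write-up is a clean account of the folklore reasoning the author is implicitly relying on.
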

	\begin{construction}
		The category \(\s\Tc\Alg\) carries a model structure transferred from the category \(\s\Vect_K\) along the functor \(\UU.\) We restrict \(\s\Tc\Alg_r\) to give this category the structure of a small model category. 
	\end{construction}
	\subsection{Commutative differential \(\Tc\)-algebras}
	Another way to construct affine derived \mbox{\(\Tc\)-spaces} is to consider them as duals of differential~\(\Tc\)-algebras.
	\begin{definition}
		Denote by \(\dg\Tc\Alg_r\) the intersection of the category \(\dg\Tc\Alg\) with the category of \(\max(\aleph_1,\sup_n(|\Tc(n,1)|)\)-generated CDGAs over \(K\) with the additional assumption that the homology of all objects in this category is finitely generated (as a \(\Tc\)-algebra).
	\end{definition}
	\begin{construction}\label{con:cov_on_DGTAlg}
		A collection \(\{f_i:B_i\to A\vert i\in I\}\) is a covering family of a DG \(\Tc\)-algebra \(A\) if 
		\begin{itemize}
			\item \(\{H_0(f_i): H_0(B_i)\to H_0(A)\}\) is a covering family for some coverage \(\Tc\Alg^{\op}_r\) (see \S\ref{sec:Fermat_examples} for examples);
			\item \(H_*(B_i)\xleftarrow{\cong}H_*(A)\otimes_{H_0(A)}H_0(B_i).\)
		\end{itemize}
	\end{construction}
	
	\subsection{Semifree differential graded \(\Tc\)-algebras}
	Finally, yet another model is given by duals of semifree commutative differential graded \(\Tc\)-algebras.
	\begin{definition}
		A differential graded algebra is semifree if the underlying graded algebra is free.
	\end{definition}
	\begin{notation}
		We will denote the full subcategory of \(\dg\Tc\Alg_r\) spanned by semifree differential graded \(\Tc\)-algebras by \(\sfdg\Tc\Alg_r.\)
	\end{notation}
	\begin{construction}
		A homotopical coverage on \(\sfdg\Tc\Alg_r^{\op}\) is transferred along the inclusion functor \[\sfdg\Tc\Alg_r\hookrightarrow \dg\Tc\Alg_r.\]
	\end{construction}
	\section{Main results}\label{sec:main}
	\begin{theorem}\label{thm:eq_sf}
		The inclusion functor \(\iota\) defines a Dwyer--Kan equivalence between relative categories:
		\[\begin{tikzcd}
			\sfdg\Tc\Alg_r && \dg\Tc\Alg_r
			\arrow["\iota", from=1-1, to=1-3]
			\arrow["\simeq"', from=1-1, to=1-3]
		\end{tikzcd}\]
	\end{theorem}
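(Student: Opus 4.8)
The plan is to show that the inclusion $\iota\colon SFDG\Tc\Alg_r \hookrightarrow DG\Tc\Alg_r$ becomes an equivalence after localizing both sides at their respective classes of quasi-isomorphisms. Since $\iota$ is already fully faithful at the level of $1$-categories, the content is essential surjectivity up to zigzags of weak equivalences, together with the fact that $\iota$ is homotopically fully faithful. The key input is that every object of $DG\Tc\Alg_r$ admits a semifree resolution: given $A \in DG\Tc\Alg_r$, one builds a cofibrant replacement $\wt{A} \xrightarrow{\simeq} A$ in the model structure of Proposition~\ref{prop:model_struct_on_DGFermat}, and any cofibrant object there is a retract of a cellular one, which by the description of the generating cofibrations $\FF^g(S^{k-1})\hookrightarrow\FF^g(D^k)$ is semifree (the underlying graded algebra is a free graded-commutative $T_0$-algebra on generators adjoined along the cells). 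A retract of a semifree algebra need not be semifree on the nose, but one checks that the cellular $\wt{A}$ itself can be taken to lie in $SFDG\Tc\Alg_r$ — one must verify the cardinality bound and that its homology is finitely generated, which follows because we may resolve by a cell complex with homology equal to $H_*(A)$ and with the same generation bounds, using that $DG\Tc\Alg_r$ was defined precisely so that $H_*$ is finitely generated.

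Concretely, the steps I would carry out, in order, are: (1) recall the Dwyer--Kan criterion — a relative functor is a DK-equivalence iff it is essentially surjective on homotopy categories and induces weak equivalences on mapping spaces (homotopy fully faithful); (2) prove homotopy essential surjectivity by exhibiting, for each $A$, a cellular (hence semifree) object $\wt A \in SFDG\Tc\Alg_r$ with a quasi-isomorphism $\wt A \to A$, checking that $\wt A$ meets the $\max(\aleph_1,\sup_n|\Tc(n,1)|)$-generation bound and the finite-generation-of-homology condition; (3) establish homotopy full faithfulness by observing that the model structure on $DG\Tc\Alg_r$ restricts compatibly, that semifree algebras are cofibrant, and that mapping spaces between cofibrant--fibrant objects computed in the subcategory agree with those computed in $DG\Tc\Alg_r$ (every object is fibrant, so this is automatic once we know cofibrant replacements can be chosen semifree); (4) conclude via Theorem~\ref{thm:eq_sh_top}-style reasoning, or rather directly: a relative inclusion that is homotopy fully faithful and homotopy essentially surjective is a DK-equivalence.

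The main obstacle I anticipate is \emph{not} the homotopical content — that is a fairly standard cofibrant-replacement argument — but rather the bookkeeping around the smallness/finiteness conditions baked into the definition of $DG\Tc\Alg_r$ versus $SFDG\Tc\Alg_r$. One has to be careful that the cellular resolution of an object with finitely generated homology can itself be taken with finitely generated homology (which is clear, since the resolution is a quasi-isomorphism) \emph{and} within the prescribed generation cardinality: a minimal-type cell structure adjoins, in each homological degree, a set of generators controlled by the ranks of the homology and of the relations, and one must confirm these stay below $\max(\aleph_1,\sup_n|\Tc(n,1)|)$. A secondary subtlety is that $SFDG$ is defined as a \emph{full} subcategory of $DG\Tc\Alg_r$, so one should make sure the relative structure (class of weak equivalences) on $SFDG\Tc\Alg_r$ is simply the restriction of the quasi-isomorphisms, and that there is no hidden discrepancy between "semifree and in $DG\Tc\Alg_r$" and "semifree resolution as produced by the small object argument" — the latter may be transfinitely large, so one invokes instead the existence of a \emph{small} semifree model, which again is where the definition of $\Tc\Alg_r$ and the explicit form of $\FF^g$ in Proposition~\ref{prop:model_struct_on_DGFermat} does the work.
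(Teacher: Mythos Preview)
Your proposal is correct and follows essentially the same route as the paper: both rest on the observation that semifree objects are exactly (up to retracts) the cofibrant objects in the model structure of Proposition~\ref{prop:model_struct_on_DGFermat}, so that $\iota$ is the inclusion of the cofibrant objects into a model category, which is a Dwyer--Kan equivalence by a standard result. The paper's own proof is a single sentence citing \cite[Corollary~10.4]{dwyer2004homotopy} for precisely this fact; you have simply unpacked the argument behind that citation (cofibrant replacement gives homotopy essential surjectivity, and cofibrancy of semifree objects plus fibrancy of everything gives homotopy full faithfulness).

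One remark: the cardinality bookkeeping you flag as the ``main obstacle'' is a genuine point, and the paper does not address it --- it treats the cited result as directly applicable without checking that a cofibrant replacement of an object of $DG\Tc\Alg_r$ can be chosen inside $SFDG\Tc\Alg_r$ (i.e., within the $\max(\aleph_1,\sup_n|\Tc(n,1)|)$-generation bound). Your instinct to verify this via a controlled cell attachment is the right one, and is more careful than the paper; the bound was chosen precisely so that such resolutions exist (cf.\ the remark after Definition~\ref{def:res_TAlg}), but the paper leaves this implicit.
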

	\begin{proof}
		This is the standard result about inclusion of the subcategory of cofibrant objects into a model category. See, for instance, \DHKS \cite[Corollary 10.4]{dwyer2004homotopy}.
	\end{proof}
	\begin{proposition}\label{pro:DK_preserves_coverages}
		The functors \(\Qc,\N\) in the Dold--Kan correspondence preserve the coverages on the categories \(\s\Tc\Alg^{\op}_r\) and \(\dg\Tc\Alg^{\op}_r.\) That is, if \(\{\B_i\to\A\}\) is a covering family in \(\s\Tc\Alg^{\op}_r,\) then \(\{\N\B_i\to \N\A\}\) is a covering family in \(\dg\Tc\Alg^{\op}_r\) and the analogous statement holds for \(\Qc.\)
	\end{proposition}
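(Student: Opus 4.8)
The plan is to reduce the claim to two natural isomorphisms of graded $\Tc$-algebras,
\[
H_*(\N A)\cong\pi_*(A)\qquad\text{and}\qquad \pi_*(\Qc M)\cong H_*(M),
\]
compatible with the $\Tc\Alg$-structure in degree $0$ and with the module structure over it in positive degrees, and then to read off the statement from Constructions \ref{con:cov_on_sTAlg} and \ref{con:cov_on_DGTAlg}. For $\N$ the isomorphism is just the (underlying) Dold--Kan correspondence: in positive degrees $H_n(\N A)=\pi_n(A)$ as $\pi_0(A)$-modules, while in degree $0$ the functors $\N$ and $N$ agree and the $\Tc$-algebra structure is transported along that identification (cf.\ Proposition \ref{prop:monoidal_moore} and the remarks after it), so $H_0(\N A)=\pi_0(A)$ as $\Tc$-algebras, all naturally in $A$. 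Hence a covering family $\{B_i\to A\}$ in $s\Tc\Alg^{\op}_r$ — meaning $\{\pi_0(B_i)\to\pi_0(A)\}$ is a covering family in $\Tc\Alg^{\op}_r$ and $\pi_*(A)\otimes_{\pi_0(A)}\pi_0(B_i)\xrightarrow{\cong}\pi_*(B_i)$ — is carried by $\N$ to a family $\{\N B_i\to\N A\}$ for which $\{H_0(\N B_i)\to H_0(\N A)\}$ is literally the same covering family of $\Tc\Alg^{\op}_r$ and the base-change isomorphism transports verbatim; so both conditions of Construction \ref{con:cov_on_DGTAlg} hold. Along the way one notes that $\N$ does not enlarge a generating set and keeps homology finitely generated, so it really lands in $DG\Tc\Alg_r$.

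For $\Qc$ the corresponding identity $\pi_*(\Qc M)\cong H_*(M)$ is the substantive point, since a priori $\Qc$ need not be homotopical and $M$ need not be semifree. When $M$ is cellular (equivalently semifree), the $\Tc$-analogue of Theorem \ref{thm:eq_on_free} — the theorem stated just before Theorem \ref{thm:DK_for_Fermat} — gives that the unit $\beta\colon M\to\N\Qc M$ is a weak equivalence, whence $\pi_*(\Qc M)\cong H_*(\N\Qc M)\cong H_*(M)$ compatibly with all the algebraic structure, $\beta$ being a morphism of DG $\Tc$-algebras. To drop semifreeness I would show that $\Qc$ preserves all weak equivalences, via the comonadic bar resolution of $M$ whose terms $(\FF^g\UU)^{\bullet+1}M$ are semifree and which is split (so its realization is a homotopy colimit) and which $\Qc$, a left adjoint, preserves. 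Since $\UU$, the free functor $\FF$ of $\Tc$ (because $\pi_*\FF\Gamma\cong\FF^g H_*$ by Proposition \ref{prop:homology_commutes_with_completion} and the corollary following it, together with Proposition \ref{prop:homology_commutes}), and the graded symmetric algebra functor $\FF^g$ all preserve weak equivalences, and $\Qc$ preserves weak equivalences between semifree objects by Ken Brown's lemma, a weak equivalence $M\to M'$ induces a degreewise weak equivalence of bar resolutions and hence $\Qc M\to\Qc M'$ is a weak equivalence. Choosing a semifree resolution $\wt M\xrightarrow{\ \sim\ }M$ then yields $\pi_*(\Qc M)\cong\pi_*(\Qc\wt M)\cong H_*(\wt M)\cong H_*(M)$, respecting the $\Tc$-algebra structure in degree $0$; in particular $\Qc$ carries $DG\Tc\Alg_r$ into $s\Tc\Alg_r$.

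With both isomorphisms in hand the proposition follows by the same unwinding as for $\N$: a covering family $\{B_i\to A\}$ in $DG\Tc\Alg^{\op}_r$ goes to $\{\Qc B_i\to\Qc A\}$, for which $\{\pi_0(\Qc B_i)\to\pi_0(\Qc A)\}$ is identified with the covering family $\{H_0(B_i)\to H_0(A)\}$ of $\Tc\Alg^{\op}_r$, and
\[
\pi_*(\Qc A)\otimes_{\pi_0(\Qc A)}\pi_0(\Qc B_i)\cong H_*(A)\otimes_{H_0(A)}H_0(B_i)\cong H_*(B_i)\cong\pi_*(\Qc B_i),
\]
so Construction \ref{con:cov_on_sTAlg} is met. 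The main obstacle is precisely the homotopy-invariance of $\Qc$ flagged above — extending $\pi_*(\Qc M)\cong H_*(M)$ from semifree $M$ to arbitrary objects of $DG\Tc\Alg_r$; everything else is a bookkeeping exercise. Should that extension prove delicate, one can instead observe that the coverages of Constructions \ref{con:cov_on_sTAlg} and \ref{con:cov_on_DGTAlg} depend only on the homotopy type and combine the semifree case with Theorem \ref{thm:eq_sf}, which is in any case all that the downstream application needs, since Theorem \ref{thm:eq_sh_top} only tests coverages through the homotopy-category functor $\Ho$.
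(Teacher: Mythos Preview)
Your proof is correct and follows the same route as the paper: both reduce the claim to the two natural identifications \(\pi_*(\A)\cong H_*(\N\A)\) and \(H_*(\A)\cong\pi_*(\Qc\A)\), then read off the coverage conditions directly from Constructions~\ref{con:cov_on_sTAlg} and~\ref{con:cov_on_DGTAlg}. The paper's proof is in fact much terser than yours---it simply asserts both identities and concludes---whereas you correctly isolate the second identity as the non-formal point (since \(\Qc\) is only a left Quillen functor, not a priori homotopical on all objects) and supply an argument for it via the comonadic bar resolution, together with the cleaner fallback of noting that the coverage conditions are homotopy-invariant and invoking Theorem~\ref{thm:eq_sf} to reduce to the semifree case; either of these fills a step the paper leaves implicit.
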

	\begin{proof}
		Observe that since homotopy groups of objects in \(s\Tc\Alg_r\) are computed at the level of the underlying simplicial sets and all such sets are fibrant we have
		\[
		\pi_*(\A)=H_*(\N \A),\quad \A\in s\Tc\Alg_r.
		\]
		Similarly, for the objects in \(\dg\Tc\Alg_r\) we have
		\[
		H_*(\A)=\pi_*(\Qc\A),\quad \A\in \dg\Tc\Alg_r.
		\]
		Thus the functor \(\N\) maps Condition \ref{con:cov_on_sTAlg} into Condition \ref{con:cov_on_DGTAlg} and hence preserves covering families. Analogously the functor \(\Qc\) preserves covering families. 
	\end{proof}
	\begin{corollary}
		There is a Quillen adjunction between sheaf categories (see Definition \ref{def:sheaves}) induced by the adjunction on sites
		\[\begin{tikzcd}
			{\Sh(\s\Tc\Alg^{\op}_r)} && {\Sh(\dg\Tc\Alg^{\op}_r)}
			\arrow["{Q^*}"', curve={height=12pt}, from=1-1, to=1-3]
			\arrow["{N^*}"', curve={height=12pt}, from=1-3, to=1-1]
		\end{tikzcd}\]
	\end{corollary}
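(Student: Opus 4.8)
The plan is to present the Dold--Kan pair as a morphism of relative sites and then quote Proposition \ref{pro:quil_adj}. Concretely I would show that $\Qc$ and $\N$ restrict to an adjoint pair $\Qc\colon DG\Tc\Alg_r\rightleftarrows s\Tc\Alg_r\colon\N$ between the $\U$-small categories of Section \ref{sec:models}, that the opposite functor $\Qc^{\op}\colon DG\Tc\Alg^{\op}_r\to s\Tc\Alg^{\op}_r$ is a morphism of relative sites, and that the resulting push-forward--pull-back pair $\bigl((\Qc^{\op})_!,(\Qc^{\op})^*\bigr)$ is precisely the pair $(N^*,Q^*)$ of the statement, with $(\Qc^{\op})^*=Q^*=(-)\circ\Qc$ and the left adjoint $(\Qc^{\op})_!$ identified with $N^*=(-)\circ\N$ via the adjunction $\Qc\dashv\N$ (left Kan extension along $\Qc$ coincides with restriction along its right adjoint $\N$).

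First I would check the restriction. Only the defining conditions of $s\Tc\Alg_r$ and $DG\Tc\Alg_r$ have to be shown stable under $\N$ and $\Qc$. The finite-generation conditions of Construction \ref{con:cov_on_sTAlg} and Construction \ref{con:cov_on_DGTAlg} transfer because $H_*(\N\A)=\pi_*(\A)$ and $\pi_*(\Qc\A)=H_*(\A)$, the identities already used in the proof of Proposition \ref{pro:DK_preserves_coverages}. The cardinality bound is preserved because the explicit formulas $\N\A=N(\UU\A)$ and $\Qc\A=\FF\Gamma\UU\A/\II(\A)$ involve only the countably many simplicial operations together with a single application of the free functor $\FF$ to a $K$-vector space of bounded dimension, and by Lemma \ref{lem:free_for_fermat} the value of $\FF$ on such a space is generated by at most $\max(\aleph_1,\sup_n|\Tc(n,1)|)$ elements; this is exactly what the somewhat unusual Definition \ref{def:res_TAlg} was designed to guarantee. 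Being restrictions of an adjoint pair to full subcategories stable under both functors, $\Qc$ and $\N$ remain adjoint after restriction.

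Next I would check that $\Qc^{\op}$ satisfies the homotopical lifting property for covering families. By Proposition \ref{pro:DK_preserves_coverages} both $\N$ and $\Qc$ carry covering families to covering families, and by Theorem \ref{thm:DK_for_Fermat} the unit and counit of $\Qc\dashv\N$ become isomorphisms in the localized categories, so $\N$ and $\Qc$ are mutually quasi-inverse equivalences of the two homotopical sites and both preserve covers. Given a covering family $\{\beta_i\to\Qc\A\}$ of $\Qc\A$ in $s\Tc\Alg^{\op}_r$, applying $\N$ and the isomorphism $\N\Qc\A\cong\A$ produces a covering family $\{\N\beta_i\to\A\}$ of $\A$ in $DG\Tc\Alg^{\op}_r$, and applying $\Qc^{\op}$ carries $\N\beta_i\to\A$ to a morphism canonically isomorphic to $\beta_i\to\Qc\A$, which therefore factors through $\beta_i$. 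This is the required lifting, so Proposition \ref{pro:quil_adj} applies and produces the Quillen adjunction $(N^*,Q^*)$ between $\Sh(s\Tc\Alg^{\op}_r)$ and $\Sh(DG\Tc\Alg^{\op}_r)$.

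The only step I expect to require genuine care is the cardinality estimate in the second paragraph, i.e.\ confirming that $\Qc$ does not leave the $\U$-small subcategory; the rest is formal manipulation with Bousfield localizations and Kan extensions. I should also note that this adjunction is only the first half of what we eventually want: restricting Theorem \ref{thm:DK_for_Fermat} makes $\Qc^{\op}$ a Dwyer--Kan equivalence, and Proposition \ref{pro:DK_preserves_coverages} shows it also reflects homotopical coverages, so Theorem \ref{thm:eq_sh_top} will later promote $(N^*,Q^*)$ to a Quillen equivalence.
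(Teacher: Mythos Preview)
Your argument is correct, but it is considerably more elaborate than the paper's. The paper's proof is two sentences: Proposition~\ref{pro:DK_preserves_coverages} tells us that \(\N^*\) carries hypercovers to hypercovers, and then \cite[Proposition 8.2]{dugger2004hypercovers} immediately yields the Quillen adjunction \((\N^*,\N_*=\Qc^*)\). You instead route everything through the paper's own Proposition~\ref{pro:quil_adj}, which forces you to verify that \(\Qc^{\op}\) is a morphism of relative sites (the homotopical lifting property), to identify \((\Qc^{\op})_!\) with \(N^*\) via the adjunction \(\Qc\dashv\N\), and to check explicitly that the pair restricts to the \(\U\)-small categories of Section~\ref{sec:models}. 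What your route buys is that it stays entirely within the paper's own framework and makes explicit the cardinality bookkeeping that Definition~\ref{def:res_TAlg} was set up to handle---a point the paper leaves implicit. What the paper's route buys is brevity: once covers are preserved, Dugger--Hollander--Isaksen's result packages the whole descent-to-sheaves step in one citation, without needing to unwind the lifting condition or the Kan-extension identification. Both arguments ultimately rest on the same ingredient, Proposition~\ref{pro:DK_preserves_coverages}, so neither is deeper; yours is just the internal version of the same mechanism.
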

	\begin{proof}
		By Proposition \ref{pro:DK_preserves_coverages} the functor \(\N^*\) maps hypercovers on \(\dg\Tc\Alg^{\op}_r\) into hypercovers on \(\s\Tc\Alg^{\op}_r.\) Thus by \DHI \cite[Proposition 8.2]{dugger2004hypercovers} the functors \((\N^*,\N_*=\Qc^*)\) form a Quillen adjunction.  
	\end{proof}
	We arrive at our main result about the equivalence of various models for derived stacks:
	\begin{theorem}\label{thm:site_eq}
		There is an equivalence of model sites \[d\Cart_{\Tc}:=\sfdg\Tc\Alg^{\op}_r, \dg\Tc\Alg^{\op}_r, \text{ and } \s\Tc\Alg^{\op}_r.\]
	\end{theorem}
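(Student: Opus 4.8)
The plan is to connect the three relative sites by a zig–zag of equivalences of relative sites in the sense of Definition \ref{def:eq_rel_sites} and then invoke Theorem \ref{thm:eq_sh_top}, which turns such an equivalence into a Quillen equivalence of the associated sheaf topoi. Concretely, I would link $s\Tc\Alg_r^{\op}$ with $DG\Tc\Alg_r^{\op}$ via the opposites of the Dold--Kan functors $\N$ and $\Qc$, link $DG\Tc\Alg_r^{\op}$ with $d\Cart=SFDG\Tc\Alg_r^{\op}$ via the inclusion, and then compose. Note that the Quillen \emph{adjunction} between the first two sheaf topoi has already been produced in the Corollary preceding the statement, so the only real work is promoting it (and its semifree analogue) to a Quillen equivalence.

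For the first leg I would start from Theorem \ref{thm:DK_for_Fermat}: $(\Qc\dashv\N)$ is a Quillen equivalence between $\s\Tc\Alg$ and $\dg\Tc\Alg$. The first point to verify is that this adjunction restricts to the size-bounded subcategories $s\Tc\Alg_r$ and $DG\Tc\Alg_r$. The homotopical conditions match up because $\pi_\ast(\A)=H_\ast(\N\A)$ and, on $DG\Tc\Alg_r$, $H_\ast(\A)=\pi_\ast(\Qc\A)$ (the identities from the proof of Proposition \ref{pro:DK_preserves_coverages}), so finite generation of homotopy groups is preserved in both directions; the cardinality bound $\max(\aleph_1,\sup_n|\Tc(n,1)|)$ is preserved because, by \eqref{eq:functor_Q_over_T}, $\Qc(\A)$ is a quotient of $\FF$ applied to a simplicial vector space that in each degree has cardinality at most the generating cardinal of $\A$, hence acquires no new generators beyond that cardinal --- which is precisely the reason for the definition of $\Tc\Alg_r$ recorded in the Remark after Definition \ref{def:res_TAlg}. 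Granting this, $(\Qc\dashv\N)$ restricts to an equivalence of relative categories, so the opposites $s\Tc\Alg_r^{\op}$ and $DG\Tc\Alg_r^{\op}$ are Dwyer--Kan equivalent, giving hypothesis (1) of Theorem \ref{thm:eq_sh_top}. Hypothesis (2), reflection of homotopical coverages, is Proposition \ref{pro:DK_preserves_coverages} read in both directions, since $\N$ translates Construction \ref{con:cov_on_sTAlg} into Construction \ref{con:cov_on_DGTAlg} and $\Qc$ translates it back, so a family is a covering family iff its image is. The remaining requirement, that the comparison functor (say $\Qc^{\op}$, the site map underlying the Corollary) be a morphism of relative sites, then follows: a covering family of $f(X)$ can be pulled back along the Dwyer--Kan equivalence to a family over $X$, which is a covering family by (2) and whose image recovers the original one up to isomorphism in the localized target. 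Theorem \ref{thm:eq_sh_top} now yields a Quillen equivalence $\Sh(s\Tc\Alg_r^{\op})\simeq\Sh(DG\Tc\Alg_r^{\op})$.

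For the second leg, Theorem \ref{thm:eq_sf} asserts that $\iota\colon SFDG\Tc\Alg_r\hookrightarrow DG\Tc\Alg_r$ is a Dwyer--Kan equivalence, hence so is $\iota^{\op}$, giving hypothesis (1); hypothesis (2) is automatic because the coverage on $SFDG\Tc\Alg_r^{\op}$ was by construction transferred along $\iota^{\op}$, so a family there is covering exactly when its image in $DG\Tc\Alg_r^{\op}$ is, and the morphism-of-sites condition follows as before once one uses that every object of $DG\Tc\Alg_r$ admits a semifree (cofibrant) model within the same size class. Thus Theorem \ref{thm:eq_sh_top} gives $\Sh(SFDG\Tc\Alg_r^{\op})\simeq\Sh(DG\Tc\Alg_r^{\op})$. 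Composing the two legs and recalling $d\Cart=SFDG\Tc\Alg_r^{\op}$ establishes the asserted equivalence of all three model sites.

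The step I expect to be the main obstacle is the bookkeeping in the first leg: one must be sure that the left adjoint $\Qc$ genuinely stays inside the carefully chosen small category $\Tc\Alg_r$ --- neither enlarging the generating cardinal nor destroying finite generation of homology --- and, relatedly, that the Quillen equivalence of Theorem \ref{thm:DK_for_Fermat} actually \emph{restricts} to a Dwyer--Kan equivalence of the small relative sites rather than only of the ambient categories of all $\Tc$-algebras, which requires the defining properties of $\Tc\Alg_r$ to be matched by $\N$ and $\Qc$ in both directions. Once that is in place, the rest is a formal application of Theorem \ref{thm:eq_sh_top} to the coverage statement of Proposition \ref{pro:DK_preserves_coverages} and to Theorem \ref{thm:eq_sf}.
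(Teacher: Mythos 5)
Your proposal is correct and follows essentially the same route as the paper: both legs (Dold--Kan via Theorem \ref{thm:DK_for_Fermat} together with Proposition \ref{pro:DK_preserves_coverages}, and the semifree inclusion via Theorem \ref{thm:eq_sf} with the coverage transferred by construction) are fed into Theorem \ref{thm:eq_sh_top} exactly as in the paper's proof. Your additional care about $\Qc$ and $\N$ restricting to the size-bounded subcategories $s\Tc\Alg_r$ and $DG\Tc\Alg_r$ addresses a point the paper passes over silently, and is consistent with the stated purpose of Definition \ref{def:res_TAlg}.
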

	\begin{proof}
		\par By construction the functor \(\iota^{\op}:d\Cart_{\Tc}\to \dg\Tc\Alg^{\op}_r\) reflects coverages and hence hypercovers. Moreover, by Theorem \ref{thm:eq_sf} the functor \(\iota^{\op}\) is an equivalence of homotopical categories. Thus, \(\iota^{\op}\) is an equivalence of relative sites (Definition~\ref{def:eq_rel_sites}) and we can apply Theorem~\ref{thm:eq_sh_top}. 
		\par Similarly, by Proposition \ref{pro:DK_preserves_coverages} the functor \(\Qc^*\) reflects homotopy coverages and hence hypercovers. Additionally, by Theorem \ref{thm:DK_for_Fermat}, it is a Quillen equivalence. Hence \(\Qc\) is an equivalence of relative sites and we can apply Theorem \ref{thm:eq_sh_top} to see that the sheaf categories over \(\dg\Tc\Alg^{\op}_r\) and \(\s\Tc\Alg^{\op}_r\) are equivalent. 
	\end{proof}
	\section{Applications}\label{sec:apps}
	We state the following slight generalization of Theorem \ref{thm:site_eq}.
	\begin{theorem}\label{thm:site_eq_gen}
		Assume that we have small full subcategories in \({\MM\sse \s\Tc\Alg}\) and \({\NN\sse \dg\Tc\Alg}\). And assume that they are mapped to one another by the pair \((\Qc,\N)\) defined in \S\ref{sec:DK_Fermat}. Assume additionally, that there are homotopical coverages on \(\MM\) and \(\NN\) preserved by the pair \((\Qc,\N).\) Then the pair \((\Qc^*,\N^*)\) induces an equivalence of sheaf topoi
		\[\begin{tikzcd}
			{\Sh(\MM^{\op})} && {\Sh(\NN^{\op}).}
			\arrow["{Q^*}"', curve={height=12pt}, from=1-1, to=1-3]
			\arrow["{N^*}"', curve={height=12pt}, from=1-3, to=1-1]
		\end{tikzcd}\]
	\end{theorem}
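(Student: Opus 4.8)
The plan is to run the proof of Theorem~\ref{thm:site_eq} essentially verbatim, with the full categories $s\Tc\Alg^{\op}_r$ and $DG\Tc\Alg^{\op}_r$ replaced throughout by $\MM^{\op}$ and $\NN^{\op}$. Concretely, I would exhibit the opposite $\N^{\op}\colon\MM^{\op}\to\NN^{\op}$ of the restricted Dold--Kan functor (equivalently, its pseudo-inverse $\Qc^{\op}$) as an equivalence of relative sites in the sense of Definition~\ref{def:eq_rel_sites}, and then invoke Theorem~\ref{thm:eq_sh_top} to upgrade it to a Quillen equivalence of the associated sheaf topoi. So there are exactly two things to verify: that this functor is a Dwyer--Kan equivalence of relative categories, and that it reflects homotopical coverages.

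First I would check that the restriction of the Quillen equivalence $(\Qc,\N)$ of Theorem~\ref{thm:DK_for_Fermat} to $\MM\rightleftarrows\NN$ is again a Quillen equivalence. By assumption the pair carries $\MM$ into $\NN$ and $\NN$ into $\MM$, and the pseudo-model structures on $\MM$, $\NN$ are the ones inherited from $s\Tc\Alg$ and $DG\Tc\Alg$, so that the weak equivalences, the generating (co)fibrations and the cellular objects all restrict. The proof of Theorem~\ref{thm:eq_on_free} and of its Fermat analogue is local: it only inspects the behaviour of $\Qc$, which preserves cellular objects, and of the unit $\beta$ on cellular (hence cofibrant) objects, so it survives the restriction unchanged. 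A Quillen equivalence of $\U$-small model categories induces a Dwyer--Kan equivalence of the underlying relative categories, and passing to opposite categories gives the first input of Theorem~\ref{thm:eq_sh_top}.

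For the second input, the hypothesis that the homotopical coverages on $\MM^{\op}$ and $\NN^{\op}$ are preserved by $(\Qc^{\op},\N^{\op})$ says precisely that each of $\N^{\op}$ and $\Qc^{\op}$ takes covering families to covering families. Because $\Qc$ and $\N$ become mutually inverse equivalences on homotopy categories by the previous step, preservation in both directions upgrades to the two-sided statement that $\{U_i\to X\}$ is a covering family in $\Ho(\MM^{\op})$ if and only if its image under $\Ho(\N^{\op})$ is a covering family in $\Ho(\NN^{\op})$ --- that is, $\N^{\op}$ reflects homotopical coverages. This is the same bookkeeping as in Proposition~\ref{pro:DK_preserves_coverages}, now taken as an assumption. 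With both inputs in place, Theorem~\ref{thm:eq_sh_top} yields the Quillen equivalence $(\Qc^*,\N^*)\colon\Sh(\MM^{\op})\rightleftarrows\Sh(\NN^{\op})$.

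The step I expect to be the real obstacle is the first one: checking, for the particular $\MM$ and $\NN$ at hand, that the Dold--Kan Quillen equivalence actually restricts to them --- i.e.\ that $\MM$ and $\NN$ are stable under the cofibrant replacements and the cellular pushout constructions that enter the proof of Theorem~\ref{thm:DK_for_Fermat}. Once this closure property has been verified, everything else is formal and identical to the proof of Theorem~\ref{thm:site_eq}.
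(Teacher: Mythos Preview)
Your proposal is correct and follows exactly the paper's own approach: the paper's proof is a one-liner stating that ``the proof is the same as the proof of the second equivalence in Theorem~\ref{thm:site_eq}'', which amounts precisely to verifying the two hypotheses of Theorem~\ref{thm:eq_sh_top} (Dwyer--Kan equivalence and reflection of coverages) for the restricted pair and then invoking that theorem. Your only excess is the final paragraph: the closure of $\MM$ and $\NN$ under the relevant constructions is part of the hypothesis (``pseudo-model subcategories \ldots\ mapped to one another by $(\Qc,\N)$''), so for the purposes of this theorem there is nothing to check --- that burden is deferred to the applications.
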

	\begin{proof}
		The proof is the same as the proof of the second equivalence in Theorem \ref{thm:site_eq}.
	\end{proof}
	\subsection{Models for derived smooth infinitesimal analysis}\label{subsec:eq_synth}
	In this section, we list several possible sites for derived stacks over the Fermat theory \(\CI\) (see Example \ref{ex:cinf}) following \MR \cite[Appendix 2]{moerdijk2013models}. We then apply Theorem \ref{thm:site_eq_gen} to establish equivalences between sheaf topoi over some of those sites. It is customary to refer to algebras over the theory \(\CI\) as {\it \(\CI\)-rings}; we adhere to this convention. 
	\begin{notation}
		Let \(I\triangleleft \CI(\R^n)\) be an ideal of smooth functions. Denote by \(Z(I)\sse \R^n\) the set of zeroes for this ideal. For a set \(X\sse \R^n\) denote by \(m_{X}^0\) the ideal of smooth functions that vanish on \(X.\)
	\end{notation}
	\begin{definition}\label{def:gm_det}
		An ideal \(I\triangleleft \CI(\R^n)\) is {\it germ determined} if for any \(f\in \CI(\R^n)\setminus I\) there exists \(p\in \R^n\) such that in the localization we have
		\(f_p\notin I_p.\)
	\end{definition}
	\begin{definition}\label{def:Weil_alg}
		A local \(\CI\)-ring \(W\) is {\it a Weil algebra} if it is finite dimensional over \(\R\) and splits as a sum
		\[
		W=\R\la 1\ra\oplus\mf{m}.
		\]
		Here \(\mf{m}\) is the maximal ideal.
	\end{definition}
	\par We start by listing the non-derived versions of the sites we are interested in.
	\begin{construction}\label{con:cinf_sites}
		The category of Loci \(\mb{L}\) is the category opposite to the full subcategory of finitely generated \(\CI\)-rings (see Definition \ref{def:fg_algebra}) in the category of \(\CI\)-rings. The category \(\mb{L}\) is the ambient category for all other sites listed below, which are full subcategories of \(\mb{L}\) specified by a condition on presentations of their objects as quotients \(\CI(\R^n)/I.\)
		\begin{enumerate}
			\item The category \(\mb{E}\) consists of \(\CI(\R^n)/I\) such that \(f\in I\) iff \(f\vert_{Z(I)}\equiv 0.\)
			\item The category \(\mb{F}\) consists of \(\CI(\R^n)/I\) such that \(I\) is closed.
			\item The category \(\mb{G}\) consists of \(\CI(\R^n)/I\) such that \(I\) is germ-determined (see Definition \ref{def:gm_det}).
			\item The category \(\mb{L}_{fp}\) consists of \(\CI(\R^n)/I\) such that \(I\) is finitely generated.
			\item The category \(\mb{V}\) consists of \(\CI(\R^n)/I\) such that \(I\) is \(\CI\)-radical, i.e. \(f\in I\) iff~\(\exists g\in I,\; Z(f)=Z(g).\)
			\item The category \(\mb{V}_\omega\) consists of \(\CI(\R^n)/I\) such that \(I\) is \(\CI\)-radical and countably generated, i.e. there is \(\{g_k\}_{k=0}^\infty\sse \CI(\R^n)\) such that \(f\in I\) iff \(\exists n\; Z(g_n)=Z(f).\)
			\item The category \(\mb{MW}\) consists of \(\CI(\R^n)/I\) isomorphic to \(\CI(M)\otimes_{\infty} W\) for a manifold \(M\) and \(W\cong \CI(\R^{n-k})/J\) is a Weil algebra (see Definition \ref{def:Weil_alg}). Here~\(\otimes_{\infty}\) denotes the coproduct in the category of \(\CI\)-rings.
			\item The category \(\mb{W}\) consists of \(\CI(\R^n)/I\) isomorphic \(A\otimes_\infty W,\) where \(A\in \mb{V}\) and~\(W\) is a Weil algebra.
			\item The category \(\mb{W}_\omega\) consists of \(\CI(\R^n)/I\) isomorphic \(A\otimes_\infty W,\) where \(A\in \mb{V}_\omega\) and~\(W\) is a Weil algebra.
		\end{enumerate}
	\end{construction}
	All the categories listed above are defined to be closed under isomorphism. We have the following diagram of inclusion functors (see \MR \cite[Discussion on p.364]{moerdijk2013models}). Single arrows mean that there is no right adjoint; all other inclusions are coreflective:
	\[\begin{tikzcd}
		{\mb{E}} && {\mb{F}} & {\mb{G}} \\
		{\mb{V}_\omega} & {\mb{V}} & {\mb{MW}} & {\mb{L}_{fp}} & {\mb{L}} \\
		{\mb{W}_\omega} & {\mb{W}}
		\arrow[""{name=0, anchor=center, inner sep=0}, shift left=2, from=1-1, to=1-3]
		\arrow[""{name=1, anchor=center, inner sep=0}, shift left=3, from=1-1, to=2-1]
		\arrow[from=2-1, to=2-2]
		\arrow[""{name=2, anchor=center, inner sep=0}, shift left=2, from=1-3, to=1-1]
		\arrow[""{name=3, anchor=center, inner sep=0}, from=2-1, to=1-1]
		\arrow[""{name=4, anchor=center, inner sep=0}, shift left=3, from=2-1, to=3-1]
		\arrow[from=3-1, to=3-2]
		\arrow[from=2-2, to=3-2]
		\arrow[""{name=5, anchor=center, inner sep=0}, from=3-1, to=2-1]
		\arrow[from=2-3, to=1-3]
		\arrow[""{name=6, anchor=center, inner sep=0}, shift left=1, from=2-3, to=2-4]
		\arrow[""{name=7, anchor=center, inner sep=0}, shift left=2, from=1-3, to=1-4]
		\arrow[from=2-4, to=1-4]
		\arrow[""{name=8, anchor=center, inner sep=0}, shift left=2, from=2-4, to=2-3]
		\arrow[from=2-4, to=2-5]
		\arrow[""{name=9, anchor=center, inner sep=0}, shift right=1, from=1-4, to=2-5]
		\arrow[""{name=10, anchor=center, inner sep=0}, shift right=3, from=2-5, to=1-4]
		\arrow[curve={height=12pt}, from=3-2, to=2-5]
		\arrow[""{name=11, anchor=center, inner sep=0}, shift left=2, from=1-4, to=1-3]
		\arrow["\dashv"{anchor=center, rotate=-90}, draw=none, from=0, to=2]
		\arrow["\dashv"{anchor=center, rotate=-180}, draw=none, from=1, to=3]
		\arrow["\dashv"{anchor=center, rotate=-180}, draw=none, from=4, to=5]
		\arrow["\dashv"{anchor=center, rotate=51}, draw=none, from=9, to=10]
		\arrow["\dashv"{anchor=center, rotate=-90}, draw=none, from=6, to=8]
		\arrow["\dashv"{anchor=center, rotate=-90}, draw=none, from=7, to=11]
	\end{tikzcd}\]
	\begin{definition}
		{\it An open cover} on \(\CI(\R^n)/I\in \LL\) is a family of morphisms
		\[
		\{\CI(\R^n)/I\to \CI(U_i)/(I\vert_{U_i})\mid \{U_i\to \R^n\} \text{ is an open cover}\}
		\]
		 {\it A projection} is a map opposite to the canonical map \(A\to A\otimes_\infty B\) for \(B\neq 0.\) 
	\end{definition}
	The categories of Construction \ref{con:cinf_sites} can be equipped with one of the following four coverages:
	\begin{construction}\label{con:cinf_top}
		\begin{enumerate}[(i)]
			\item\label{iso} Covering families consist of isomorphisms.
			\item\label{fo} Covering families consist of finite open covers.
			\item\label{fop} Covering families consist of finite open covers and projections.
			\item\label{oc} Covering families consist of open covers.
		\end{enumerate}
	\end{construction}
	We have the following diagram of corresponding topoi, where the empty entries correspond to non-subcanonical topologies.
	\[\tag{$\star$}\label{table_of_topoi}\begin{tikzcd}
		& {\ref{oc}} & {\ref{fop}} & {\ref{fo}} & {\ref{iso}} \\
		{\mb{L}} && {\mc{B}} & {\mc{Z}} & {\Set^{\mb{L}^{\op}}} \\
		{\mb{F}} & {\mc{F}} & {\mc{F}_{fin}} & {\mc{F}_{fin}} & {\Set^{\mb{F}^{\op}}} \\
		{\mb{G}} & {\mc{G}} & {\mc{G}_{fin}} & {\mc{G}_{fin}} & {\Set^{\mb{G}^{\op}}} \\
		{\mb{MW}} & {\mc{MW}} & {\mc{MW}} & {\mc{MW}_{fin}} & {\Set^{\mb{MW}^{\op}}} \\
		{\mb{V}} && {\mc{V}_{fpr}} & {\mc{V}_{fin}} & {\Set^{\mb{V}^{\op}}} \\
		{\mb{W}} && {\mc{W}_{fpr}} & {\mc{W}_{fin}} & {\Set^{\mb{W}^{\op}}} \\
		{\mb{E}} & {\mc{E}} & {\mc{E}} & {\mc{E}_{fin}} & {\Set^{\mb{E}^{\op}}}
		\arrow[hook, from=2-3, to=2-4]
		\arrow["\simeq", from=3-3, to=3-4]
		\arrow[hook, from=3-2, to=3-3]
		\arrow[hook, from=4-2, to=4-3]
		\arrow["\simeq", from=4-3, to=4-4]
		\arrow["\simeq", from=5-2, to=5-3]
		\arrow[hook, from=5-3, to=5-4]
		\arrow[hook, from=6-3, to=6-4]
		\arrow[hook, from=7-3, to=7-4]
		\arrow[hook, from=8-3, to=8-4]
		\arrow["\simeq", from=8-2, to=8-3]
		\arrow[hook, from=2-4, to=2-5]
		\arrow[hook, from=3-4, to=3-5]
		\arrow[hook, from=4-4, to=4-5]
		\arrow[hook, from=5-4, to=5-5]
		\arrow[hook, from=6-4, to=6-5]
		\arrow[hook, from=7-4, to=7-5]
		\arrow[hook, from=8-4, to=8-5]
	\end{tikzcd}\]
	The rows for \(\mb{V}_\omega,\mb{W}_\omega\) are the same as for \(\mb{V},\mb{W}.\)
	Now we define derived versions of sites from Construction \ref{con:cinf_sites}. 
	\begin{definition}\label{def:der_sites}
		Let \(\mb{M}\) be any of the ten categories defined in Construction \ref{con:cinf_sites}. 
		\par Then its simplicial derived version \(\s\MM\) is the full subcategory of \(\s\CI\Alg_r\) spanned by algebras weakly equivalent to ones with \(\pi_0\) belonging to \(\mb{M}.\)
		\par Analogously we define DG derived version \(\dg\MM\) of \(\mb{M}\) by taking the full subcategory of \(\dg\CI\Alg_r\)  spanned by algebras weakly equivalent to ones with \(H_0\) belonging to \(\mb{M}.\)
		\par Finally, we can construct a DG semifree version \(\sfdg\MM\) of \(\mb{M}\) by taking the full subcategory of \(\dg\MM\) spanned by semifree algebras. 
	\end{definition}
	This definition clearly allows us to immediately promote the coverages of Construction~\ref{con:cinf_top} to homotopical coverages. Thus we obtain a higher version of each site. Hence there is a table analogous to the table \eqref{table_of_topoi} for model topoi of stacks, and we can promote all equivalences of the table \eqref{table_of_topoi} to equivalences of model topoi. 
	\begin{theorem}\label{thm:eq_for_models}
		Each site of the previous construction yields three homotopical sites: simplicial, differential graded, and semifree differential graded one. The categories of stacks over these three sites are all Quillen equivalent.
	\end{theorem}
	\begin{proof}
		Since the categories of \(\dg\MM\) and \(\s\MM\) are mapped to one another by the pair \((\Qc,\N)\) and this pair of functors preserves the homotopical coverages, Theorem \ref{thm:site_eq_gen} applies, and we get the desired equivalence. For the inclusion of the semifree site, we use the standard technique involving the deformation of the identity functor via the cofibrant replacement, see the book of Riehl \cite[\S 2]{riehl2014categorical} to see that there is an equivalence of homotopical categories. Subsequently we can apply the result of \TV \cite[Theorem 2.3.1]{toen2005homotopical}.
	\end{proof}
	\subsection{Deriving models for analysis}
	Obviously, all categories of Construction~\ref{con:cinf_sites} and coverages of Construction~\ref{con:cinf_top} can be defined for Examples \ref{ex:ag}, \ref{ex:hol} and even more generally for reduced Fermat theories (see Carchedi--Roytenberg \cite[Definition 2.28]{carchedi2012theories}). The difference between some of the categories can disappear. For example, for finitely generated algebras over \(\Com_K\) (\(K\) is a field) there is no difference between the categories \(\mb{V}\) and \(\mb{V}_\omega\) because all finitely generated \(\Com_K\)-algebras are Noetherian. It is also clear that Theorem \ref{thm:eq_for_models} can be adapted without any change to this setting if the Fermat theory in question is considered over a field of characteristic \(0.\)
	This idea can be formalized in the following result.
	\begin{theorem}\label{thm:eq_for_models_gen}
		Assume that we have a site \(\mb{M}\) consisting of finitely presented \(\Tc\)-algebras for a Fermat theory \(\Tc\). Then we can construct the three derived sites \(\dg\MM,\) \(\sfdg\MM,\) and \(\s\MM\) as in Definition \ref{def:der_sites}. If \(\Tc\) is a theory over a field of characteristic zero, these three sites produce equivalent model topoi of sheaves with the equivalence being induced by the pair \((\Qc,\N).\)
	\end{theorem}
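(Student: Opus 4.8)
The plan is to follow the proof of Theorem~\ref{thm:eq_for_models} essentially verbatim, with the Fermat theory $\CI$ replaced throughout by an arbitrary $\Tc$ over a field of characteristic zero; the two inputs are Theorem~\ref{thm:site_eq_gen} (for the simplicial/DG comparison) and Theorem~\ref{thm:eq_sh_top} (for the semifree one). First I would check the three hypotheses of Theorem~\ref{thm:site_eq_gen} for the subcategories $s\MM\sse s\Tc\Alg$ and $DG\MM\sse DG\Tc\Alg$ of Definition~\ref{def:der_sites}. Smallness is automatic: since $\mb{M}$ consists of finitely presented $\Tc$-algebras, every object of $s\MM$ (resp.\ $DG\MM$) is weakly equivalent to one with finitely generated $\pi_*$ (resp.\ $H_*$), hence lies in $s\Tc\Alg_r$ (resp.\ $DG\Tc\Alg_r$), which is $\U$-small by Definition~\ref{def:res_TAlg}; and because the defining conditions of $s\MM$ and $DG\MM$ are closed under weak equivalence, these full subcategories inherit the ambient pseudo-model structures. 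That $(\Qc,\N)$ restricts to an adjoint pair between $s\MM$ and $DG\MM$ follows from the two identities established in the proof of Proposition~\ref{pro:DK_preserves_coverages}, namely $\pi_*(\A)\cong H_*(\N\A)$ for $\A\in s\Tc\Alg_r$ and $H_*(\B)\cong\pi_*(\Qc\B)$ for $\B\in DG\Tc\Alg_r$, which are isomorphisms of $\Tc$-algebras in degree $0$ and of graded algebras overall: in particular $H_0(\N\A)\cong\pi_0(\A)$, so $\pi_0(\A)\in\mb{M}$ forces $\N\A\in DG\MM$, and symmetrically $\Qc$ carries $DG\MM$ into $s\MM$.

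For the third hypothesis, preservation of the homotopical coverages, I would invoke exactly the argument of Proposition~\ref{pro:DK_preserves_coverages}: the coverages of Constructions~\ref{con:cov_on_sTAlg} and~\ref{con:cov_on_DGTAlg} are detected on $\pi_0$ (resp.\ $H_0$) together with a flat base-change condition on $\pi_*$ (resp.\ $H_*$), and the isomorphisms above match these conditions under $\Qc$ and $\N$; when $\Tc=\CI$ and $\mb{M}$ is one of the ten categories of Construction~\ref{con:cinf_sites}, this is precisely the promotion of the coverages of Construction~\ref{con:cinf_top} used in Theorem~\ref{thm:eq_for_models}. With all three hypotheses in place, Theorem~\ref{thm:site_eq_gen} yields a Quillen equivalence $\Qc^*\colon \Sh(s\MM^{\op})\rightleftarrows \Sh(DG\MM^{\op})\colon \N^*$.

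It then remains to splice in the semifree site. Exactly as in Theorem~\ref{thm:eq_sf}, the inclusion $SFDG\MM\hookrightarrow DG\MM$ is the inclusion of the full subcategory of semifree (hence cofibrant) objects into a model category, so it is a Dwyer--Kan equivalence of relative categories; concretely one deforms the identity functor through a semifree (cofibrant) replacement using the two-sided deformation machinery of \cite[\S 2]{riehl2014categorical}. Since the coverage on $SFDG\MM^{\op}$ is by construction transferred along this inclusion, the inclusion reflects homotopical coverages, so it is an equivalence of relative sites in the sense of Definition~\ref{def:eq_rel_sites}, and Theorem~\ref{thm:eq_sh_top} (equivalently \cite[Theorem 2.3.1]{toen2005homotopical}) gives $\Sh(SFDG\MM^{\op})\simeq\Sh(DG\MM^{\op})$; composing the two equivalences delivers the three-way statement. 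The one step I expect to need genuine care is the cardinality bookkeeping already flagged in the remark after Definition~\ref{def:res_TAlg}: one must confirm that $\Qc$ --- which was constructed precisely so as not to leave $\Tc\Alg_r$ --- does not inflate the generation bound when shuttling objects between $s\MM$ and $DG\MM$, and that the finite-presentation hypothesis on $\mb{M}$ propagates to finite generation of $\pi_*$ and $H_*$ under $\Qc$ and $\N$. This, together with the fact that Theorem~\ref{thm:DK_for_Fermat} is available only over a field of characteristic zero, is where the hypotheses on $\mb{M}$ and on $\Tc$ are consumed.
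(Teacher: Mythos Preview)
Your proposal is correct and follows essentially the same route as the paper: verify that $(\Qc,\N)$ carry $s\MM$ and $DG\MM$ into one another and preserve the homotopical coverages so that Theorem~\ref{thm:site_eq_gen} applies, then handle $SFDG\MM$ by cofibrant replacement as in \cite[\S 2]{riehl2014categorical} and \cite[Theorem 2.3.1]{toen2005homotopical}. If anything, your write-up is more explicit than the paper's---the paper simply asserts the hypotheses of Theorem~\ref{thm:site_eq_gen} hold ``by construction'' and refers back to Theorem~\ref{thm:eq_for_models}, whereas you spell out the $\pi_0/H_0$ matching and flag the cardinality bookkeeping the paper leaves implicit.
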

	\begin{proof}
		The proof is essentially the same as the proof of Theorem \ref{thm:eq_for_models_gen}. We observe that the pair \((\Qc,\N)\) maps \(\dg\MM\) and \(\s\MM\) to one another and that homotopical coverages are preserved by construction. Hence Theorem \ref{thm:site_eq_gen} is applicable, and the result is proved. The equivalence with the topos of stacks on the semifree site is proved verbatim as in Theorem~\ref{thm:eq_for_models}.
	\end{proof}
	\subsection{Comparison with other models of derived manifolds}
	\subsubsection{DG (Q) manifolds of positive amplitude}\label{subsec:eq_dg}
	\begin{definition}[{\cite[Definition 1.15]{behrend2020derived}}]\label{def:g_pos_man}
		A {\it graded manifold} \(\M\) of {\it positive amplitude} is a pair \((M, \As)\), where \(M\) is a manifold,
		\[
		\As=\bigoplus_i\As^i
		\]
		is a sheaf of positively graded \(\CI(M)\)-algebras over \(M,\) such that there exists a positively graded vector space 
		\[
		V=V^1\oplus\ldots \oplus V^n
		\]
		and an open cover of \(M\) denoted by \(\{U_i\to M\}_{i\in I}\) such that there is an isomorphism of sheaves of \(\CI(M)\)-algebras
		\[
		\As\vert_{U_i}\cong \CI(U_i)\otimes \SS V^\vee,\quad \text{for all } i\in I.
		\]
	\end{definition}
	We assume that \(\M\) is finite-dimensional, i.e., both \(M\) and \(V\) have finite dimension. The following definition is originally due to Schwarz \cite{schwarz1993semiclassical} and Kontsevich; it was then expanded upon by \AKSZ\cite{alexandrov1997geometry}.
	\begin{definition}[{\cite[Definition 1.16]{behrend2020derived}}]
		A {\it differential graded manifold} (or {\it dg manifold}, or {\it Q-manifold}) is a triple \((M,\As,Q),\) where \((M,\As)\) is a graded manifold and \(Q\) is a derivation of the sheaf of \(\R\)-algebras \(\As\) such that \([Q,Q]=0.\) A dg manifold is said to be of {\it positive amplitude} if its underlying graded manifold is of positive amplitude.
	\end{definition}
	Morphisms between such objects are given by the following definition.
	\begin{definition}
		A {\it morphism} of dg manifolds \((M,\As,Q) \to (N, \Bs, Q')\) is a pair \((f,\Phi)\), where \(f : M \to N\) is a differentiable map of manifolds, and \(\Phi : \Bs \to f_* \As\) (or equivalently~\({\Phi: f^* \Bs \to \As}\)) is a morphism of sheaves of graded algebras such that \[Q \Phi = \Phi f^* Q'.\]
	\end{definition}
	\begin{definition}
		Denote by \(\dgMan\) the category of dg manifolds of positive amplitude and morphisms between them. Denote by \(\dgCart\) the full subcategory of \(\dgMan\) spanned by dg manifolds of the form \((\R^n,\As,Q)\) for all \(n\ge 0.\)
	\end{definition}
	\begin{construction}
		To a dg manifold \(\M=(M,\As,Q)\) of positive amplitude we associate its {\it algebra of smooth functions} \(\CI(\M)\in \dg\CI\Alg\) defined via formula
		\[
		\CI(\M)_0=\CI(M),\quad \CI(\M)_{\bullet \ge 1}=\Gamma(M;\As).
		\]
		The differential on \(\CI(\M)\) is induced by \(Q.\)
		The structure of the graded \(\CI\)-ring comes from the \(\CI(M)\)-algebra structure on global sections of \(\As.\)
		\par We obtain a functor
		\[
		\CI:\dgMan^{\op}\to \dg\CI\Alg.
		\]
	\end{construction}
	We have the following comparison result for Cartesian dg manifolds and the opposite category of finitely generated semifree DG \(\CI\)-rings.
	\begin{proposition}\label{pro:equiv_dg_Cart} Denote by \(\sfdg\CI\Alg_f\) the full subcategory of finitely generated semifree \(\CI\)-rings.
		There is an equivalence of categories between \(\dgCart\) and~\(\sfdg\CI\Alg_f^{\op}\) induced by \(\CI\) and its right adjoint \(\l\) (which we call the {\it dg locus functor}).
	\end{proposition}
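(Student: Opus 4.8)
The plan is to set up the adjunction $\CI \dashv \l$ between $\dgCart$ and $SFDG\CI\Alg_f^{\op}$ explicitly and then verify that the unit and counit are natural isomorphisms. First I would pin down the objects on both sides: an object of $\dgCart$ is a triple $(\R^n, \As, Q)$ where, by Definition \ref{def:g_pos_man}, the sheaf $\As$ is \emph{globally} of the form $\CI(\R^n)\otimes\SS V^\vee$ for a finite-dimensional positively graded vector space $V = V^1\oplus\cdots\oplus V^m$ (the covering in the definition of positive amplitude can be taken to be $\{\R^n \to \R^n\}$ since $\R^n$ admits a global frame), together with a degree $+1$ square-zero derivation $Q$. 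Applying the functor $\CI$ gives $\CI(\M)_0 = \CI(\R^n)$ and $\CI(\M)_{\ge 1} = \Gamma(\R^n;\As)$, which as a graded $\CI$-ring is exactly the free graded-symmetric algebra $\CI(\R^n)\otimes_{\R}\SS V^\vee$ over the $\CI$-ring $\CI(\R^n)$, equipped with the differential induced by $Q$. This is precisely a finitely generated semifree DG $\CI$-ring: semifree because the underlying graded algebra is free on $V^\vee$, finitely generated because $n$ and $\dim V$ are finite. So $\CI$ does land in $SFDG\CI\Alg_f^{\op}$, and it is clearly functorial in morphisms of dg manifolds.

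Next I would construct the candidate inverse $\l$, the dg locus functor. Given a finitely generated semifree DG $\CI$-ring $B$, its underlying graded algebra is free: $B \cong B_0 \otimes_\R \SS W$ for a finitely generated $\CI$-ring $B_0$ and a finite-dimensional positively graded vector space $W$ (placed in positive degree so that generators have positive degree; $W = W^1\oplus\cdots$). Since $B_0$ is finitely generated, one needs to know $B_0 \cong \CI(\R^n)$; this is where I would restrict attention appropriately — in the Cartesian setting the relevant $B_0$ are exactly the free $\CI$-rings $\CI(\R^n)$, which is what makes the statement true for $\dgCart$ (as opposed to $\dgMan$, where one would only get $SFDG$ algebras whose degree-zero part is $\CI(M)$ for a general manifold $M$). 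Given $B_0 = \CI(\R^n)$, set the underlying graded manifold to be $(\R^n, \As)$ with $\As = \widetilde{\SS W}$ the free sheaf of graded algebras on the trivial bundle with fibre $W$ over $\R^n$ — concretely $\As\vert_U = \CI(U)\otimes \SS W$ — so that $\Gamma(\R^n;\As) = B_{\ge 1}$. The differential of $B$ is a degree $+1$ derivation squaring to zero; since $\As$ is the sheafification of $B$ and derivations of $B$ extend to derivations of the structure sheaf (derivations are determined by their values on the generators $W$ and on $\CI(\R^n)$, and smooth functions patch), this yields a homological vector field $Q$ on $(\R^n,\As)$. Thus $\l(B) = (\R^n,\As,Q) \in \dgCart$, and I would check functoriality: a DG $\CI$-ring map $\phi: B \to B'$ is determined by where it sends the $\CI$-ring $B_0$ and the generators $W$, and these data assemble into a morphism of dg manifolds $\l(B') \to \l(B)$ (note the variance: $\l$ goes from $SFDG\CI\Alg_f^{\op}$, so a map in the opposite category is a map $B' \to B$, giving $\l(B)\to \l(B')$... — I would be careful to state the variance so that $\l$ and $\CI^{\op}$ compose correctly).

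With both functors in hand, the verification that they form an equivalence reduces to two checks. The counit $\CI(\l(B)) \to B$ (in the appropriate category) is the identity on $\CI(\R^n)$, the identity on the generating space $W$, and is compatible with differentials by construction of $Q$ from the differential of $B$; it is therefore an isomorphism since both sides are the free graded algebra on the same data with the same differential. The unit $\M \to \l(\CI(\M))$ is the identity on the base $\R^n$ and sends $\As$ to the free sheaf reconstructed from $\Gamma(\R^n;\As) = \CI(\M)_{\ge 1}$; here I would invoke that $\As$ is, by the positive amplitude hypothesis applied globally, already the free sheaf $\CI(\R^n)\otimes\SS V^\vee$, so no information is lost in taking global sections and re-sheafifying, and the vector field $Q$ is recovered from the induced differential. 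Hence the unit is also an isomorphism. The main obstacle I anticipate is the sheaf-theoretic bookkeeping in the unit: one must be sure that $\As$ is determined by its global sections as a sheaf of $\CI(\R^n)$-algebras — i.e. that $\As = \widetilde{\Gamma(\R^n;\As)}$ — and that the square-zero derivation $Q$ of the sheaf $\As$ is determined by, and reconstructible from, the induced DG structure on global sections. Both follow from the explicit global freeness $\As \cong \CI(\R^n)\otimes\SS V^\vee$ (smooth functions on $\R^n$ and its opens are a nice enough sheaf, and a $\CI(\R^n)$-linear derivation is determined by its action on the finitely many generators of $V^\vee$), but the argument should be spelled out rather than asserted. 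A secondary point to handle carefully is matching morphisms: a morphism of dg manifolds $(f,\Phi): \M \to \Ns$ with $f:\R^n\to\R^m$ is \emph{not} required to preserve the linear structure on fibres, and correspondingly $\Phi$ is an arbitrary DG $\CI$-ring map, so the correspondence between morphisms on the two sides is a genuine bijection precisely because semifree DG $\CI$-ring maps out of a free object are freely determined on generators — I would make this identification explicit to conclude fully faithfulness, with essential surjectivity following from the construction of $\l$.
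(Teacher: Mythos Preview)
Your approach is correct and essentially the same as the paper's: both construct the adjoint \(\l\) explicitly and verify that the unit and counit are isomorphisms. The only cosmetic difference is that the paper defines \(\l\) by applying the ordinary \(\CI\)-locus functor to \(\A_0\) and sheafifying the higher part via localization, \(sh\A_{\ge 1}(U)=\A_0\{\chi_U^{-1}\}\A_{\ge 1}\), whereas you reconstruct the sheaf directly from the semifree presentation; these agree on \(\dgCart\). Your worry about whether \(B_0\cong\CI(\R^n)\) is unnecessary: in the paper's conventions ``semifree'' means the underlying graded algebra is free in the sense of \(\FF^g\), which forces the degree-zero part to be a free \(\CI\)-ring, so this is automatic rather than a restriction to impose.
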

	\begin{proof}
		This result is a straightforward generalization of the fact that the functor \(\CI(-)\) is a fully faithful embedding of the category of ordinary manifolds.
		We start by noting that \(\l\) can be defined as follows:
		\[
		\l(\A)=(l(\A_0),sh\A_{\ge 1}, d_\A),\quad sh\A_{\ge 1}(U):=\A_0\{\chi_U^{-1}\}\A_{\ge 1}.
		\]
		We denote \(\A_0\{\chi_U^{-1}\}\) the \(\CI\)-localization (see Borisov--Noel \cite[\S 2]{borisov2011simplicial}). Also \(l\) denotes the usual locus of an ordinary \(\CI\)-ring (see \cite[\S II.1]{moerdijk2013models}) and \(\chi_U\) is the function which is zero precisely on the complement of the open subset \(U.\)
		Denote by~\((\wt{M},\wt{\A},\wt{Q})\) the value of \(\l\CI\) on a dg manifold \((M,\A,Q).\) Then by the construction of \(\l\) we have canonical isomorphisms \(\wt{M}\cong M,\) also \(\wt{\As}\cong \As,\) and finally \(\wt{Q}\cong Q\) since it is uniquely defined by its action on global sections. The analogous isomorphisms for \(\CI\l\) are proved identically. 
	\end{proof}
	Proposition \ref{pro:equiv_dg_Cart} allows us to transfer the structure of a relative site on \(d\Cart_{\CI}\) to \(\dgCart.\) For this structure we have the following equivalence result.
	\begin{proposition}
		We have an equivalence of relative sites 
		\[
		\CI:\dgCart\to d\Cart_{\CI}
		\]
		inducing a Quillen equivalence on categories of higher sheaves (Definition \ref{def:sheaves}):
		\[
		\CI_!:\Sh(\dgCart)\rightleftarrows\Sh(d\Cart_{\CI}):(\CI)^*.
		\]
	\end{proposition}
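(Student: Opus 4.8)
The plan is to deduce everything from Theorem~\ref{thm:eq_sh_top} via the notion of equivalence of relative sites (Definition~\ref{def:eq_rel_sites}): I must check that $\CI:\dgCart\to d\Cart$ is a morphism of relative sites which is a Dwyer--Kan equivalence of relative categories and which reflects homotopical coverages. The underlying statement that $\CI$ is an equivalence of plain categories, with quasi-inverse the dg locus functor $\l$, is already Proposition~\ref{pro:equiv_dg_Cart}; what remains is bookkeeping of the homotopical and coverage data, both of which were \emph{defined} on $\dgCart$ by transport along this equivalence.

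First I would record that $\CI$ preserves and reflects weak equivalences. Since the relative structure on $\dgCart$ is the one transported from $d\Cart=SFDG\CI\Alg_r^{\op}$ along the categorical equivalence of Proposition~\ref{pro:equiv_dg_Cart}, a morphism $(f,\Phi)$ of Cartesian dg manifolds is a weak equivalence precisely when $\CI(f,\Phi)$ is a quasi-isomorphism of semifree DG $\CI$-rings. Thus $\CI$ is a functor of relative categories that preserves and reflects weak equivalences and is moreover essentially surjective and fully faithful; hence it is an equivalence of relative categories, and in particular a Dwyer--Kan equivalence.

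Next I would treat the coverage. The homotopical coverage on $\dgCart$ was likewise defined to be the image under $\CI$ of that on $d\Cart$ (equivalently, pulled back along $\l$), so a family $\{(f_i,\Phi_i):\M_i\to\M\}$ becomes a covering family in $\Ho(\dgCart)$ if and only if $\{\CI(\M_i)\to\CI(\M)\}$ is one in $\Ho(d\Cart)$; that is, $\CI$ reflects (and preserves) homotopical coverages. The homotopical lifting property for covering families then follows formally: given a covering family $\{\beta_i:V_i\to\CI(\M)\}$ in $\Ho(d\Cart)$, transport it along a quasi-inverse of $\Ho(\CI)$ to a covering family in $\Ho(\dgCart)$, whose image under $\Ho(\CI)$ is, up to isomorphism, the original family, so each member factors through some $\beta_i$. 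Hence $\CI$ is a morphism of relative sites satisfying hypotheses (1) and (2) of Theorem~\ref{thm:eq_sh_top}, which yields the asserted Quillen equivalence
\[
\CI_!:\Sh(\dgCart)\rightleftarrows\Sh(d\Cart):\CI^*.
\]

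The argument is largely formal, so the main obstacle is not a deep step but the careful matching of finiteness and cardinality conditions. One must check that the category $SFDG\CI\Alg_f^{\op}$ of Proposition~\ref{pro:equiv_dg_Cart} is indeed (equivalent to) the full subcategory of $d\Cart=SFDG\CI\Alg_r^{\op}$ cut out by the $\max(\aleph_1,\sup_n|\CI(n,1)|)$-generation and finite generation of homology conditions of Definition~\ref{def:res_TAlg} --- i.e.\ that for semifree DG $\CI$-rings ``finitely generated'' agrees with those conditions --- and that $\l$ lands in that subcategory. One should also keep track of the opposite-category conventions so that the lifting property and the covering families point in the right directions; once this is arranged, no further work is needed.
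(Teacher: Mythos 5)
Your proposal is correct and follows essentially the same route as the paper: since the relative site structure on $\dgCart$ is transported along the categorical equivalence of Proposition~\ref{pro:equiv_dg_Cart}, the functor $\CI$ is by construction a Dwyer--Kan equivalence reflecting coverages, so Theorem~\ref{thm:eq_sh_top} applies. The paper's proof is just a terser version of yours; your additional remarks on the lifting property and on matching the finiteness conditions are reasonable bookkeeping that the paper leaves implicit.
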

	\begin{proof}
		By construction \(\CI\) reflects weak equivalences and is an equivalence of categories. Thus it induces an equivalence on Dwyer--Kan localizations. Also, by construction, the functor \(\CI\) reflects coverages, and thus it is an equivalence of relative sites (Definition~\ref{def:eq_rel_sites}). Hence Theorem \ref{thm:eq_sh_top} is applicable, and we have the claimed Quillen equivalence of sheaf categories.
	\end{proof}
	\subsubsection{Derived manifolds of Behrend--Liao--Xu}\label{subsec:eq_der}
	\begin{definition}[{\BLX \cite[Definition 1.1]{behrend2020derived}}]
		A {\it derived manifold} is a triple \(\M=(M,L,\lambda),\) where \(M\) is a manifold (called the {\it base}), \(L\) is a (finite-dimensional) graded vector bundle
		\[
		L=L^1\oplus \ldots \oplus L^n
		\]
		over \(M,\) and \(\lambda=(\lambda_k)_{k\ge 0}\) is a sequence of multi-linear operations of degree \(+1\)
		\[
		\lambda_k:L\times_M\ldots\times_M L\to L,\quad k\ge 0.
		\]
		The operations \(\lambda_k\) are required to be smooth maps over \(M\) and to induce the structure of {\it curved \(L_\infty[1]\)-algebra} (see \cite[Appendix B]{behrend2020derived}) on each fiber. 
	\end{definition}
	\begin{definition}[{\BLX \cite[Definition 1.2]{behrend2020derived}}]\label{def:CL}
		The zero locus of the map \(\lambda_0:M\to L\) is called the {\it classical locus} of \(\M=(M,L,\lambda).\) We denote it by \(\CL(\M)\sse\M.\) The points of \(\CL(\M)\) are called {\it classical points} of \(\M,\) we denote this set by \(\M(\R).\) We consider \(\CL(\M)\) not as a mere set, but as a \(\CI\)-locus (i.e., the formal dual of the corresponding \(\CI\)-ring). 
	\end{definition}
	\begin{definition}[{\BLX \cite[Definition 1.3]{behrend2020derived}}]
		A {\it morphism} of derived manifolds \[{(M,L,\lambda)\xrightarrow{(f,\phi)} (N,K,\mu)}\] is a pair consisting of a smooth map \(f:M\to N,\) and a covering morphism of graded bundles \(\phi:L\to K\) inducing a morphism of curved \(L_\infty[1]\)-algebras on the fibers. 
	\end{definition}
	\begin{definition}[{\BLX \cite[Definition 1.9]{behrend2020derived}}]
		Let \(\M=(M,L,\lambda)\) be a derived manifold. Then for a classical point \(P\in M\) we have \(\lambda_0(P)=0\) and hence there is a natural splitting 
		\[
		T_{\lambda_0(P)}L^1=T_P M\oplus L^1\vert_P.
		\]
		Denote by \(D_P\lambda_0\) the composite map
		\[
		D_P\lambda_0:=\left[T_P M\xrightarrow{T_P\lambda_0} T_P M\oplus L^1\vert_P\xrightarrow{\pi_2} L^1\vert_P\right].
		\]
		For a classical point \(P\) in \(\M\) we can now form the {\it tangent complex} of \(\M\) at \(P:\)
		\[
		T_P\M=\left[\underbrace{T_P M}_{\deg=0}\xrightarrow{D_P\lambda_0} \underbrace{L^1\vert_P}_{\deg=1}\xrightarrow{\lambda_1\vert_P} \underbrace{L^2\vert_P}_{\deg=2}\xrightarrow{\lambda_1\vert_P}\ldots\right]
		\]
	\end{definition}
	\begin{definition}[{\BLX \cite[Definition 1.12]{behrend2020derived}}]\label{def:we_dMan}
		A morphism of derived manifolds \(\M\to \Ns\) is a {\it weak equivalence} if it induces a bijection on classical loci and quasi-isomorphisms of tangent complexes over each classical point. 
	\end{definition}
	\begin{definition}[{\BLX \cite[Definition 1.21]{behrend2020derived}}]\label{def:fib_dMan}
		A morphism of derived manifolds \[(M,L,\lambda)\to (N,K,\mu)\] is a {\it fibration} if it induces a submersion \(M\to N\) and a degree-wise surjection of graded bundles \(L\to K.\)
	\end{definition}
	\begin{construction}[{\BLX \cite[\S 1.2]{behrend2020derived}}]
		Given a derived manifold \(\M=(M,L,\lambda)\) the {\it algebra of functions} is an object \(\CI(\M)\) of \(\dg\CI\Alg\) defined as the algebra of global sections of the sheaf of differential graded algebras
		\[
		\CI(\M)=\Gamma(M;\SS_{\CI(M)} L^\vee).
		\] 
		The grading is induced by the grading on \(L\) and the component \(\CI(\M)_0\) clearly coincides with \(\CI(M),\) giving \(\CI(\M)\) the structure of a graded \(\CI\)-ring. The map \(\lambda\) induces a differential, making \(\CI(M)\) into an object of \(\dg\CI\Alg.\)
	\end{construction}
	We have the following slight improvement of a result due to \BLX {\cite[Proposition 1.32]{behrend2020derived}}. 
	\begin{proposition}\label{pro:iso_on_loci}
		Let \(\A,\B\) be two DG \(\CI\)-rings. Assume \(\A=\CI(\M)\) and \(\B=\CI(\Ns)\) for some derived manifolds \(\M,\Ns.\) Let \(f:\M\to \Ns\) be a morphism of derived manifolds inducing a quasi-isomorphism \(f^*:\B\to\A.\) Then \(f\)~induces an isomorphism of classical loci (see Definition \ref{def:CL}). This is an isomorphism of \(\CI\)-loci. Moreover, the set of classical points of a derived manifold \(\M\) is canonically isomorphic to the set of \(\dg\CI\Alg\)-morphisms into the initial algebra:
		\[
		\M(\R)\cong\Hom_{\dg\CI\Alg}(\CI(\M);\R).
		\]
	\end{proposition}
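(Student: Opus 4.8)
The plan is to deduce everything from one computation, namely that the zeroth homology $H_0(\CI(\M))$ of the function algebra of a derived manifold $\M=(M,L,\lambda)$ coincides, as a $\CI$-ring, with the $\CI$-ring $\CI(\CL(\M))$ whose formal dual is the classical locus, together with the elementary observation that the initial object of $DG\CI\Alg$ is the $\CI$-ring $\R$ concentrated in degree $0$.

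First I would make this computation explicit. By construction $\CI(\M)=\Gamma(M;\SS_{\CI(M)}L^\vee)$ is a DG $\CI$-ring with $\CI(\M)_0=\CI(M)$, and by the standard formula for the differential attached to a curved $L_\infty[1]$-structure its restriction to the linear generators $(L^1)^\vee$ is contraction with the section $\lambda_0\in\Gamma(M;L^1)$; hence the image in $\CI(\M)_0$ of the differential is exactly the ring-theoretic ideal $I_{\lambda_0}\triangleleft\CI(M)$ generated by the components of $\lambda_0$ in any local frame of $L^1$ (on $\CI(M)$ itself the differential vanishes). Since a ring-theoretic quotient of a $\CI$-ring is canonically a $\CI$-ring, we obtain an isomorphism of $\CI$-rings $H_0(\CI(\M))\cong\CI(M)/I_{\lambda_0}$, and by Definition \ref{def:CL} the right-hand side is precisely the $\CI$-ring whose formal dual is $\CL(\M)$. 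Applied to a morphism $f^*\colon\B\to\A$ in $DG\CI\Alg$ arising from a morphism $f\colon\M\to\Ns$ of derived manifolds, the chain-map condition forces the degree-$0$ component of $f^*$ (which is pullback along the map of bases) to carry $I_{\mu_0}$ into $I_{\lambda_0}$, so that $H_0(f^*)$ is exactly the morphism of $\CI$-rings $\CI(\CL(\Ns))\to\CI(\CL(\M))$ dual to the map of classical loci induced by $f$.

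Second I would prove the $\Hom$-description of $\M(\R)$. Let $\phi\colon\CI(\M)\to\R$ be a morphism in $DG\CI\Alg$, with $\R$ in degree $0$ and zero differential. As a morphism of graded $\CI$-algebras, $\phi$ vanishes on $\CI(\M)$ in every nonzero degree, so it is the extension by zero of its restriction $\phi_0\colon\CI(M)\to\R$; conversely every $\CI$-ring morphism $\CI(M)\to\R$ extends by zero. Compatibility with the differentials then reduces to the single condition that $\phi_0$ annihilate the image $I_{\lambda_0}$ of the differential in $\CI(\M)_0$. Hence
\[
\Hom_{DG\CI\Alg}(\CI(\M),\R)\;\cong\;\Hom_{\CI\Alg}(\CI(M)/I_{\lambda_0},\R)\;=\;\Hom_{\CI\Alg}(\CI(\CL(\M)),\R).
\]
Since for a manifold $M$ every $\CI$-ring morphism $\CI(M)\to\R$ is evaluation at a unique point of $M$, the ones factoring through $\CI(M)/I_{\lambda_0}$ are precisely the evaluations at points of the zero set $Z(\lambda_0)=\CL(\M)$; these are by definition the classical points of $\M$, so $\Hom_{DG\CI\Alg}(\CI(\M),\R)\cong\M(\R)$, naturally in $\M$.

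Third, the statement about classical loci is now immediate: if $f^*\colon\B\to\A$ is a quasi-isomorphism then $H_0(f^*)$ is an isomorphism of $\CI$-rings, so by the first step the induced morphism $\CI(\CL(\Ns))\to\CI(\CL(\M))$ is an isomorphism, and since $\CL(\M)$ and $\CL(\Ns)$ are by definition the formal duals of these $\CI$-rings this is the same datum as an isomorphism of $\CI$-loci $\CL(\M)\xrightarrow{\cong}\CL(\Ns)$; by the last sentence of the first step it is the map induced by $f$, and in particular $f$ restricts to a bijection of classical points, compatibly with the identification of the second step. The only step carrying genuine content — everything else being a routine unravelling of definitions — is the identification $H_0(\CI(\M))\cong\CI(\CL(\M))$: concretely, that the image of the relevant part of the differential is generated exactly by the components of $\lambda_0$, and that one takes this plain quotient, with no $\CI$-radical closure, as the ring of functions on the classical locus. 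This is essentially \cite[Proposition 1.32]{behrend2020derived}; the improvement over loc.\ cit.\ is the passage from a bijection of underlying sets to an isomorphism of $\CI$-loci, together with the description of $\M(\R)$ as a $\Hom$-set in $DG\CI\Alg$.
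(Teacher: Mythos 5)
Your proposal is correct and follows essentially the same route as the paper: identify $\CL(\M)$ as the formal dual of $H_0(\CI(\M))\cong\CI(M)/I_{\lambda_0}$, deduce the isomorphism of loci from the isomorphism on $H_0$ induced by a quasi-isomorphism, and identify $DG\CI\Alg$-morphisms to $\R$ with $\CI$-ring morphisms annihilating the image of the differential. The paper's proof is just a terse version of yours; your added detail (the explicit computation of the image of the differential in degree $0$ and the Milnor-type point-evaluation argument) is exactly what the paper leaves implicit under ``by construction.''
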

	\begin{proof}
		By construction \(\CL(\M)\) is dual to the \(\CI\)-ring \(H_0(\CI(\M))\) and hence the quasi-isomorphism, which must induce an isomorphism on \(H_0,\) induces an isomorphism of classical loci. 
		\par The second statement is clear from the definition of \(\CL(\M)\) as the locus of \(H_0(\M).\) Indeed, all \(\dg\CI\Alg\)-morphisms from \(\CI(\M)\) to the initial algebra \(\R\) bijectively correspond to homomorphisms of commutative algebras \(\CI(\M)\to \R\) which annihilate the image of the differential. 
	\end{proof}
	\begin{notation}
		Denote by \(\dMan\) the category of derived manifolds with distinguished classes of weak equivalences and fibrations defined above. Denote by \(\dCart\) the full subcategory of \(\dMan\) spanned by the derived manifolds with a Cartesian base.
	\end{notation}
	
	\begin{lemma}\label{lem:alt_tangent}
		Let \(\M\) be a derived manifold with a Cartesian base. In a classical point~\({P:\CI(\M)\to\R}\) there is a canonical isomorphism of chain complexes
		\[
		T_P\M\cong (\ker P/(\ker P)^2)^\vee.
		\] 
		Here \(\vee\) stands for the linear dual. 
	\end{lemma}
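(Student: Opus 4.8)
The plan is to compute both complexes degree by degree from the explicit presentation of $\CI(\M)$ over a Cartesian base and then check that the induced differential on $\ker P/(\ker P)^2$ dualizes to the one on $T_P\M$. Since $M$ is Cartesian we may write $M=\R^n$; then $L=L^1\oplus\dots\oplus L^n$ is a trivial graded bundle, and with the grading conventions of \cite{behrend2020derived} one has $\CI(\M)\cong\CI(\R^n)\otimes\SS(V^\vee)$, where $V^k=L^k|_P$ sits in (homological) degree $k$ and the differential $d_\A$ lowers degree by one, so that $H_0(\CI(\M))=\CI(\CL(\M))$. A classical point $P:\CI(\M)\to\R$ is a morphism of $DG\CI\Alg$; in degree $0$ it is a $\CI$-ring homomorphism $\CI(\R^n)\to\R$, hence $\ev_p$ for a unique $p\in\R^n$, and in positive degrees it must vanish. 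Being a chain map forces $\langle\xi,\lambda_0\rangle(p)=0$ for all $\xi$, i.e. $\lambda_0(p)=0$, so $p\in\CL(\M)$, consistent with Proposition~\ref{pro:iso_on_loci}. Thus $\ker P$ equals $\mf m_p:=\ker\ev_p$ in degree $0$ and all of $\CI(\M)_k$ in degree $k\ge 1$. Since $P$ is a chain map into a complex concentrated in degree $0$, $d_\A$ preserves $\ker P$, and since $d_\A$ is a derivation it preserves $(\ker P)^2$; hence $d_\A$ descends to a differential $\bar d$ on $\ker P/(\ker P)^2$.

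Next I identify the underlying graded vector space. In degree $0$, $(\ker P)^2_0=\mf m_p^2$ for degree reasons, so $(\ker P/(\ker P)^2)_0=\mf m_p/\mf m_p^2\cong (T_pM)^\vee$ by Hadamard's lemma for $\CI$-rings (the class of $f$ corresponds to $df|_p$). In degree $k\ge 1$ one has $(\ker P)^2_k=\mf m_p\cdot\CI(\M)_k+\sum_{1\le j\le k-1}\CI(\M)_j\cdot\CI(\M)_{k-j}$; quotienting by the first summand passes to the fibre $(\SS V^\vee)_k$ at $p$, and then quotienting by the decomposables identifies this with the space of indecomposables of the free graded-commutative algebra $\SS(V^\vee)$, namely $(V^k)^\vee=(L^k|_P)^\vee$. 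Taking degreewise duals (legitimate as $\M$ is finite-dimensional, so $\vee\vee=\id$) gives $T_pM$ in degree $0$ and $L^k|_P$ in degree $k$ for $k\ge 1$ — exactly the graded pieces of $T_P\M$. Every identification used (the augmentation ideal and its powers, the Hadamard isomorphism, the indecomposables of a free algebra, the fibre of a dual bundle) is canonical, so this is a canonical isomorphism of graded vector spaces.

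It remains to match differentials. On a degree-$m$ generator the $\lambda_r$-term of $d_\A$ is, for $r\ge 2$, a sum of products of $r\ge 2$ positive-degree generators, hence lies in $(\CI(\M)_{\ge 1})^2\subseteq(\ker P)^2$ and contributes nothing to $\bar d$; so only the $\lambda_0$- and $\lambda_1$-terms survive. For $k=1$: $d_\A\xi=\langle\xi,\lambda_0\rangle$ for $\xi\in\Gamma((L^1)^\vee)$, and since $\lambda_0(p)=0$ the product rule gives $d\langle\xi,\lambda_0\rangle|_p=\langle\xi(p),D_p\lambda_0(\cdot)\rangle$; under $\mf m_p/\mf m_p^2\cong(T_pM)^\vee$ this says $\bar d_1=(D_P\lambda_0)^\vee$, where $D_P\lambda_0$ is precisely the linearization from \cite[Definition 1.9]{behrend2020derived}. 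For $k\ge 2$: the linear-in-generators part of $d_\A$ on $\Gamma((L^k)^\vee)$ is the transpose of $\lambda_1:L^{k-1}\to L^k$, every non-constant coefficient and every higher-order term landing in $(\ker P)^2$, so $\bar d_k=(\lambda_1|_P)^\vee$. Dualizing turns $\bar d_1,\bar d_k$ back into $D_P\lambda_0$ and $\lambda_1|_P$, which are the differentials of $T_P\M$. This yields the claimed canonical isomorphism of chain complexes.

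The main obstacle I anticipate is the degree-$1$ step: one must carefully unwind the definition of $D_P\lambda_0$ through the canonical splitting $T_{\lambda_0(P)}L^1=T_PM\oplus L^1|_P$ and verify that it agrees on the nose — not merely up to sign or a harmless automorphism — with the linearization of $\lambda_0$ extracted from the degree-$1$ component of $d_\A$. Everything else is routine bookkeeping with the grading and the two standard facts invoked above ($\mf m_p/\mf m_p^2\cong (T_pM)^\vee$ for $\CI$-rings, and indecomposables of a free graded-commutative algebra equal its generating space).
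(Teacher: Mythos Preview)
Your proposal is correct and follows the same approach as the paper: trivialize the bundle over the Cartesian base to write \(\CI(\M)\cong\CI(M)\otimes\SS(L\vert_P^\vee)\), identify \((\ker P/(\ker P)^2)^\vee\) with \(T_PM\) in degree~\(0\) (via Hadamard) and with \(L^k\vert_P\) in higher degrees (via indecomposables), and observe that the differential is induced by~\(\lambda\). Your write-up is in fact considerably more detailed than the paper's, which is quite terse --- it essentially asserts as ``clear'' the degree-by-degree identification and the matching of differentials that you carefully verify (in particular the vanishing of the \(\lambda_{r\ge 2}\) contributions modulo \((\ker P)^2\) and the identification of \(\bar d_1\) with \((D_P\lambda_0)^\vee\)).
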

	\begin{proof}
		Let \(\M=(M,L,\lambda)\) be a derived manifold with a Cartesian base. 
		Since any bundle over a Cartesian space is trivial (graded or otherwise). There is an isomorphism of graded algebras:
		\[
		\CI(\M)\cong \CI(M)\otimes \SS(L\vert_P^\vee).
		\]
		Here the grading of the generators in the symmetric algebra is given by the grading on \(L.\) This isomorphism immediately induces an isomorphism of differential graded \(\CI\)-rings, where the differential on the right is given by \(\lambda.\)
		\par Now the isomorphism becomes quite clear because in degree \(0\), this is just the usual description of the tangent space to a Cartesian space and in higher degrees \((\ker P/(\ker P)^2)^\vee\) coincides with the graded vector space of fibers at \(P\) of the bundle \(L.\) The differential on~\(T_P\M\), and \((\ker P/(\ker P)^2)^\vee\) is the same because the structure of the differential graded algebra on~\({\CI(M)\otimes \SS(L\vert_P^\vee)}\) was induced by \(\lambda.\)
	\end{proof}
	\begin{remark}
		Lemma \ref{lem:alt_tangent} can be extended to an arbitrary derived manifold. However, in this case, it is necessary to localize the ring \(\CI(\M)\) to a Cartesian neighbourhood. 
	\end{remark}
	\begin{proposition}[{\cite[Proposition 1.18]{behrend2020derived}}]
		There is an equivalence of the categories between dg manifolds of positive amplitude and derived manifolds:
		\[
		\omega:\dMan\to \dgMan.
		\]
	\end{proposition}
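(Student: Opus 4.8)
The plan is to read both sides of the claimed equivalence as packaging the same data --- a manifold $M$, a finite-rank positively graded vector bundle over it, and a square-zero derivation of degree $+1$ on the free graded-commutative algebra generated by the dual bundle --- and to check that $\omega$ simply implements the standard dictionary between (curved) $L_\infty[1]$-algebras and such derivations.

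\textbf{Definition of $\omega$.} On objects I would send $\M=(M,L,\lambda)$ to $(M,\As,Q)$ with $\As=\SS_{\CI(M)}L^\vee$, the sheaf of graded-commutative $\CI(M)$-algebras freely generated by the fibrewise dual of $L$, placed in the same positive degrees as $L$; local triviality of $L$ makes $(M,\As)$ a graded manifold of positive amplitude, and $\CI(\omega(\M))$ is naturally identified with the DG $\CI$-ring $\CI(\M)$ of the earlier Construction. By the correspondence recalled in \cite[Appendix B]{behrend2020derived}, a fibrewise curved $L_\infty[1]$-structure $(\lambda_k)_{k\ge 0}$ is the same datum as a degree-$(+1)$ derivation whose $k$-th Taylor coefficient is dual to $\lambda_k$; these glue over $M$ by smoothness of the $\lambda_k$ to a single $Q=Q_\lambda$ on $\As$, and the curved $L_\infty[1]$ identities become exactly $[Q,Q]=0$. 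On morphisms, a morphism $(f,\phi):(M,L,\lambda)\to(N,K,\mu)$ --- that is, $f:M\to N$ together with a fibrewise curved $L_\infty[1]$-morphism $\phi=(\phi_k)_{k\ge 0}$ --- corresponds, since $\Bs=\SS_{\CI(N)}K^\vee$ is free, to a degree-$0$ $\CI(N)$-linear map $K^\vee\to f_*\As$, equivalently a sequence $L^{\times k}\to f^*K$, and the equation $Q\Phi=\Phi f^*Q'$ defining a morphism of dg manifolds is precisely the curved $L_\infty[1]$-morphism equation for $(\phi_k)$. Functoriality of $\omega$ then follows because the $L_\infty[1]$/derivation dictionary is itself functorial.

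\textbf{Full faithfulness and essential surjectivity.} The bijections just described, for each pair of objects, between $\Hom_{\dMan}$ and $\Hom_{\dgMan}$ show that $\omega$ is fully faithful. For essential surjectivity, given a dg manifold $(M,\As,Q)$ of positive amplitude I would recover the bundle as the sheaf of indecomposables $N:=\As^{+}/(\As^{+})^{2}$: on each chart this is the span of the generators $V^\vee$, so it is a locally free sheaf of graded $\CI(M)$-modules, hence a finite-rank graded vector bundle, and the polynomial-degree filtration on the chartwise-free algebra $\As$ has associated graded $\SS_{\CI(M)}N$. The splitting needed to upgrade this to an isomorphism $\As\cong\SS_{\CI(M)}N$ of graded $\CI(M)$-algebras exists globally because the obstruction lies in $H^1(M;\underline{\Hom}_{\CI(M)}(N,\CI(M)))$, which vanishes: sheaves of $\CI(M)$-modules over a manifold are soft, hence acyclic. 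Setting $L=N^\vee$ and transporting $Q$ through this isomorphism to a fibrewise curved $L_\infty[1]$-structure $\lambda$ produces $\M\in\dMan$ with $\omega(\M)\cong(M,\As,Q)$; together with full faithfulness this yields the equivalence. This is, in essence, the argument of \cite[Proposition 1.18]{behrend2020derived}.

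\textbf{Main obstacle.} I expect the only non-formal point to be this essential-surjectivity step, namely globalising the chartwise normal form of $\As$ to a presentation $\SS_{\CI(M)}L^\vee$ on an honest graded vector bundle; this is where one must invoke softness (fineness) of sheaves of $\CI(M)$-modules to kill the gluing obstruction, exactly as in the classical proof that $\CI(-)$ is a full embedding of ordinary manifolds. Everything else --- the translations $\lambda\leftrightarrow Q$ and $\phi\leftrightarrow\Phi$, the verification of $[Q,Q]=0$ and of the morphism equation, and functoriality --- is the well-known formal correspondence between curved $L_\infty[1]$-algebras and square-zero degree-$(+1)$ derivations of free graded-commutative algebras, applied fibrewise and parametrised by $M$, for which finite-dimensionality of $L$ ensures no completions are needed.
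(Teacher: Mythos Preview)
The paper does not prove this proposition at all: it is simply quoted from \cite[Proposition 1.18]{behrend2020derived} without argument, so there is no ``paper's own proof'' to compare against. Your reconstruction is essentially the standard one (and, as you note, presumably that of \BLX): build $\omega$ from the dictionary between fibrewise curved $L_\infty[1]$-structures and square-zero degree-$(+1)$ derivations on the free graded-commutative algebra on the dual bundle, get full faithfulness from that dictionary on morphisms, and obtain essential surjectivity by a Batchelor-type argument using partitions of unity.

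One small correction on the essential-surjectivity step. The obstruction to splitting the quotient $\As^{+}\twoheadrightarrow N=\As^{+}/(\As^{+})^{2}$ as sheaves of graded $\CI(M)$-modules is an $\mathrm{Ext}^{1}_{\CI(M)}\bigl(N,(\As^{+})^{2}\bigr)$-class, not a class in $H^{1}\bigl(M;\underline{\Hom}_{\CI(M)}(N,\CI(M))\bigr)$ as you wrote. This does not affect the conclusion: sheaves of $\CI(M)$-modules are fine (admit partitions of unity), hence soft, so all the relevant higher $\mathrm{Ext}$ and sheaf cohomology groups vanish and the splitting exists. Once you have a $\CI(M)$-linear section $s:N\to\As^{+}$, the induced algebra map $\SS_{\CI(M)}N\to\As$ is a filtered map (for the polynomial-degree filtration, which is finite in each internal degree by positive amplitude) inducing the identity on associated graded, hence an isomorphism; this is the clean way to finish rather than invoking a single obstruction class.
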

	\begin{corollary}\label{cor:sf_dCart_eq}
		The functor \(\CI(-):\dMan\to \dg\CI\Alg^{\op}\) restricted to the subcategory \(\dCart\) takes values in \(\sfdg\CI\Alg.\) Moreover, it admits a right adjoint \[{\dl:\sfdg\CI\Alg^{\op}_r\to \dCart}\] which can be computed as the composite functor: \(\dl=\l\omega\) of the equivalence \(\omega\) and dg locus functor of Proposition \ref{pro:equiv_dg_Cart}.  
	\end{corollary}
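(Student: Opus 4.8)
The plan is to deduce the corollary by composing two equivalences already at our disposal: the Behrend--Liao-Xu equivalence $\omega:\dMan\to\dgMan$ and the equivalence $\CI:\dgCart\rightleftarrows SFDG\CI\Alg_f^{\op}:\l$ of Proposition \ref{pro:equiv_dg_Cart}. Everything below is formal bookkeeping once these are granted; the only slightly delicate point is extending the adjoint to the full domain $SFDG\CI\Alg^{\op}_r$.

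\emph{Semifreeness.} First I would check that $\CI|_{\dCart}$ takes values among semifree objects. For $\M=(M,L,\lambda)\in\dCart$ one has $M=\R^n$, and every finite-dimensional positively graded vector bundle over $\R^n$ is trivial, so $L^\vee$ admits a global trivialisation by a finite-dimensional positively graded vector space $V$. As in the proof of Lemma \ref{lem:alt_tangent} this yields an isomorphism of graded $\CI$-rings $\CI(\M)\cong\CI(\R^n)\otimes\SS V$. Since $\CI(\R^n)$ is the free $\CI$-ring on $n$ degree-$0$ generators, the underlying graded $\CI$-ring of $\CI(\M)$ is freely generated by those generators together with a basis of $V$; it is therefore finitely generated and semifree. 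Morphisms in $\dCart$ manifestly induce morphisms of semifree DG $\CI$-rings, so $\CI|_{\dCart}$ factors through $SFDG\CI\Alg_f^{\op}\hookrightarrow SFDG\CI\Alg^{\op}$.

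\emph{Factoring through $\omega$, and the adjoint.} Next I would observe that $\omega$ preserves the underlying base manifold --- it sends $(M,L,\lambda)$ to the dg manifold with structure sheaf $\SS_{\CI(M)}L^\vee$ and homological vector field recording $\lambda$ --- so it restricts to an equivalence $\omega:\dCart\xrightarrow{\simeq}\dgCart$, and by the very definitions of the two function-algebra functors there is a natural isomorphism $\CI_{\dMan}(\M)\cong\CI_{\dgMan}(\omega(\M))$, both sides being $\Gamma(M;\SS_{\CI(M)}L^\vee)$ with the $\lambda$-induced differential. Hence $\CI|_{\dCart}=\CI_{\dgCart}\circ\omega$ as functors into $SFDG\CI\Alg_f^{\op}$, and being a composite of equivalences it is itself an equivalence onto $SFDG\CI\Alg_f^{\op}$; its inverse, which is then simultaneously a left and a right adjoint, is the composite $\omega^{-1}\circ\l$, i.e. the functor written $\l\omega$ in the statement (with $\omega$ there standing for the inverse equivalence $\dgMan\to\dMan$).

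\emph{The main obstacle.} It remains to see that this adjoint extends to the stated domain $SFDG\CI\Alg^{\op}_r$. Here I would fall back on the explicit formula $\l(\A)=(l(\A_0),sh\A_{\ge1},d_\A)$ from the proof of Proposition \ref{pro:equiv_dg_Cart}, which is defined for every $\A\in SFDG\CI\Alg_r$, and verify the hom-set bijection $\Hom(\CI(\M),\A)\cong\Hom(\M,\dl\A)$ degreewise: in degree $0$ it is the classical $\CI$-locus adjunction of Moerdijk--Reyes, and in positive degrees it unwinds to the datum of a morphism of free graded modules, the unit being an isomorphism because $\CI|_{\dCart}$ already corestricts to the full subcategory $SFDG\CI\Alg_f^{\op}$ on which $\l$ inverts it. I expect this last item --- the bookkeeping for objects of $SFDG\CI\Alg^{\op}_r$ lying outside the essential image of $\CI|_{\dCart}$ --- to be the only genuinely fiddly step; the rest follows formally from Proposition \ref{pro:equiv_dg_Cart} and the Behrend--Liao-Xu equivalence.
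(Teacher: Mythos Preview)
Your approach is essentially the paper's: the paper's entire proof is ``Clearly, \(\omega\) maps derived manifolds with a Cartesian base to dg manifolds with a Cartesian base. The rest follows from Proposition \ref{pro:equiv_dg_Cart}.'' Your first two steps spell this out with more care (triviality of bundles over \(\R^n\) for semifreeness, the factorisation \(\CI|_{\dCart}=\CI_{\dgCart}\circ\omega\), inverse equivalence as two-sided adjoint), and your reading of \(\dl=\l\omega\) as \(\omega^{-1}\circ\l\) is the only sensible one.

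Where you diverge is in your third step, and here you are chasing a typo. The subscript in the statement should be \(_f\), not \(_r\): the paper's own proof only invokes Proposition \ref{pro:equiv_dg_Cart}, which concerns \(SFDG\CI\Alg_f^{\op}\), and the very next result, Theorem \ref{thm:comparison_for_dCart}, explicitly calls \(\dl\) an \emph{equivalence} \(SFDG\CI\Alg^{\op}_f\to\dCart\). Your proposed extension to \(SFDG\CI\Alg^{\op}_r\) via the explicit formula for \(\l\) would not land in \(\dCart\) anyway: a semifree DG \(\CI\)-ring in \(SFDG\CI\Alg_r\) can have \(\A_0=\CI(\R^S)\) with \(S\) infinite, and \(l(\A_0)\) is then not a finite-dimensional Cartesian space, so \(\l(\A)\) is not an object of \(\dgCart\). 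There is no coreflection of \(SFDG\CI\Alg^{\op}_r\) onto the image of \(\CI|_{\dCart}\) in general, so the adjoint genuinely does not extend. Drop the third step and you have exactly the paper's argument.
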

	\begin{proof}
		Clearly, \(\omega\) maps derived manifolds with a Cartesian base to dg manifolds with a Cartesian base. The rest follows from Proposition \ref{pro:equiv_dg_Cart}.
	\end{proof}
	By Behrend--Liao--Xu \cite[Theorem 2.15]{behrend2020derived} the category \(\dMan\) forms a category of fibrant objects in the sense of Goerss--Jardine \cite[\S I.9]{goerss2009simplicial} (originally the concept was introduced by Brown \cite{brown1973abstract}). The category \(\dCart\) is not stable under taking pullbacks, but still carries distinguished classes of weak equivalences and fibrations (see Definitions \ref{def:we_dMan}, \ref{def:fib_dMan}).  
	\par Clearly, the category \(\sfdg\CI\Alg^{\op}_f\) also carries such a structure. Indeed, it is the dual of the full subcategory spanned by cofibrant objects in the model category~\(\dg\CI\Alg.\)
	\par We have the following comparison result:
	\begin{theorem}\label{thm:comparison_for_dCart}
		The equivalence of categories 
		\[
		\sfdg\CI\Alg^{\op}_f\xrightarrow{\dl}\dCart
		\]
		reflects {\it all} weak equivalences, not only the acyclic fibrations. Additionally, \(\dl\) reflects fibrations. Thus \(\dl\) is a Dwyer--Kan equivalence of relative categories. 
	\end{theorem}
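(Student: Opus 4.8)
The plan is to lean on Corollary \ref{cor:sf_dCart_eq}: the functor $\dl$ is already an equivalence of underlying categories with quasi-inverse $\CI(-)$, so $\dl$ reflects a given class of morphisms exactly when $\CI(-)$ carries the corresponding class of morphisms of derived manifolds with Cartesian base into the given class of maps of semifree $\CI$-rings. Throughout fix $g=(f,\phi)\colon\M\to\Ns$ in $\dCart$, write $\A=\CI(\M)$ and $\B=\CI(\Ns)$ — finitely generated semifree $\CI$-rings — and let $\CI(g)\colon\B\to\A$ denote the induced map; everything reduces to statements about $\CI(g)$.

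First the fibration case. If $g$ is a fibration then, by Definition \ref{def:fib_dMan}, $f\colon M\to N$ is a submersion and the bundle map $L\to K$ is a degreewise surjection. Unwinding $\CI(\M)=\Gamma\bigl(M;\SS_{\CI(M)}L^\vee\bigr)$ and likewise for $\Ns$, one sees that $\CI(g)$ is the base change of $\B$ along $\CI(f)$ followed by the inclusion of the subalgebra generated by the pulled-back $K^\vee$: degreewise surjectivity of $L\to K$ says precisely that the complementary fibre directions adjoin $\A$ over this base change as a semifree extension in positive degrees, with triangular differential coming from $\lambda$, while in degree $0$ a submersion of Cartesian spaces yields the requisite cofibration property of the underlying $\CI$-rings. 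Thus $\CI(g)$ is a cofibration between semifree objects of $DG\CI\Alg$, i.e.\ a fibration in $SFDG\CI\Alg_f^{\op}$, which is what reflection of fibrations demands.

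For weak equivalences the strategy is to first settle the acyclic fibration case and then bootstrap. If $g$ is a fibration, the previous paragraph identifies $\CI(g)$ with an explicit semifree extension whose relative cotangent complex $\LL_{\A/\B}$ is the finite-rank complex of adjoined generators. Combining Lemma \ref{lem:alt_tangent} (tangent complex $=$ dual of the cotangent fibre $\ker P/(\ker P)^2$) with Proposition \ref{pro:iso_on_loci} (classical locus $=$ set of $DG\CI\Alg$-points), the hypothesis that $g$ also be a weak equivalence forces $\LL_{\A/\B}\otimes_\A\R_P$ to be acyclic at every classical point $P$; a detection argument over $\CI$-rings — a finitely generated module over $H_0$ of a semifree $\CI$-ring which vanishes at every classical point vanishes — then gives $\LL_{\A/\B}\simeq 0$, so $\CI(g)$ is an acyclic cofibration. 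This is the reflection of acyclic fibrations. To reach an arbitrary weak equivalence $g$, use that $\dCart$ is a category of fibrant objects (Behrend--Liao-Xu): $g$ factors as $\M\xrightarrow{s}P\xrightarrow{p}\Ns$ with $p$ a fibration and $s$ a section of a trivial fibration $q\colon P\to\M$; by two-out-of-three $p$ is a trivial fibration, so $\CI(p)$ and $\CI(q)$ are acyclic cofibrations by the case just treated, and since $\CI(s)$ retracts $\CI(q)$ it is a quasi-isomorphism; hence $\CI(g)=\CI(s)\circ\CI(p)$ is a quasi-isomorphism, i.e.\ a weak equivalence in $SFDG\CI\Alg_f^{\op}$.

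Finally, the quasi-inverse $\CI(-)$ is itself a relative functor — that it sends quasi-isomorphisms of semifree $\CI$-rings to weak equivalences of derived manifolds is immediate from Proposition \ref{pro:iso_on_loci} (isomorphism of classical loci) and Lemma \ref{lem:alt_tangent} localized at each classical point (quasi-isomorphism of tangent complexes) — so, together with the two reflection statements above, $\dl$ and $\CI(-)$ are mutually quasi-inverse equivalences that match up weak equivalences (and fibrations); since the hammock localization of a relative category depends only on the pair consisting of the category and its weak equivalences, $\dl$ is a Dwyer--Kan equivalence of relative categories, as asserted. I expect the main obstacle to be the $\CI$-algebraic detection step in the acyclic fibration case: showing that pointwise vanishing of the relative cotangent complex over all classical points already forces global acyclicity is exactly the point where one leaves the setting of algebras over a field and must argue genuinely over $\CI$-rings.
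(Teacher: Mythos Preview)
Your handling of fibrations and of the ``easy'' direction (a quasi-isomorphism of semifree $\CI$-rings yields a weak equivalence of derived Cartesian spaces, via Proposition~\ref{pro:iso_on_loci} and Lemma~\ref{lem:alt_tangent}) matches the paper's argument essentially verbatim. The divergence is in the reflection of weak equivalences, i.e.\ the implication ``$g$ a weak equivalence in $\dCart$ $\Rightarrow$ $\CI(g)$ a quasi-isomorphism''. The paper does not attempt this from scratch: it invokes \cite[Theorem~3.13]{behrend2020derived} directly. You instead try to reduce to the acyclic-fibration case via a Brown-style factorisation and then run a pointwise detection argument on the relative cotangent complex.

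There are two genuine gaps in that route. First, you assert that $\dCart$ is a category of fibrant objects, but the paper explicitly records that $\dCart$ is \emph{not} closed under pullbacks; it is $\dMan$ that is a category of fibrant objects by \cite[Theorem~2.15]{behrend2020derived}. Your factorisation $\M\xrightarrow{s}P\xrightarrow{p}\Ns$ therefore need not remain in $\dCart$, and once $P$ fails to have Cartesian base you lose the explicit semifree presentation of $\CI(P)$ on which your acyclic-fibration analysis depends. Second, even in the acyclic-fibration case, the step ``$\LL_{\A/\B}\otimes_\A\R_P\simeq 0$ for every classical point $P$ implies $\LL_{\A/\B}\simeq 0$, hence $\B\to\A$ is a quasi-isomorphism'' packs in two nontrivial $\CI$-algebraic facts: a Nakayama-type detection principle for finitely generated modules over finitely presented $\CI$-rings (which can fail for rings with flat-function phenomena), and the deduction of acyclicity of the map from acyclicity of its cotangent complex. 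You correctly flag this as the crux; the point is that this \emph{is} essentially the content of \cite[Theorem~3.13]{behrend2020derived}, so your argument is not more elementary than the paper's citation --- it simply relocates the difficulty without resolving it.
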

	\begin{proof} 
		\par We start by showing that the functor \(\CI(-)\) reflects weak equivalences. Observe that each weak equivalence in \(\dg\CI\Alg\) between algebras of functions on derived manifolds induces an isomorphism of classical loci by Proposition \ref{pro:iso_on_loci}. Let \(\alpha:\A\to \B\) be a quasi-isomorphism of \(\sfdg\CI\Alg_f.\) Consider a point \(P\) of the classical locus \(\CL(\dl(\A))\) and the corresponding point \(Q\) of the classical locus \(\CL(\dl(\B)).\) Then by Lemma \ref{lem:alt_tangent} the cotangent complex of \(\dl(\A)\) at \(P\) can expressed as a module of K\"{a}hler differentials
		\[
		T_P^*\dl(\A)=\Omega_{K}^1(\A)=\ker P/(\ker P)^2.
		\]
		Since \(\A\) is cofibrant and \(\Omega^1_K\) is a left Quillen functor, we have a quasi-isomorphism of cotangent complexes:
		\[
		T^*_P\dl(\A)\simeq T^*_Q\dl(\B).
		\] 
		Thus weak equivalences in \(\sfdg\CI\Alg_f^{\op}\) are mapped to weak equivalences between Cartesian derived manifolds in the sense of Definition \ref{def:we_dMan}.
		Now a theorem of Behrend--Liao--Xu \cite[Theorem 3.13]{behrend2020derived} implies that weak equivalences of derived Cartesian spaces correspond to quasi-isomorphisms on their algebras of functions.  
		\par By construction, fibrations in \(\sfdg\CI\Alg_f^{\op}\) correspond to global projections (semifree extensions of function algebras), which are mapped to submersions of derived Cartesian spaces. Hence \(\dl\) indeed preserves fibrations. 
	\end{proof}
	Theorem \ref{thm:comparison_for_dCart} allows us to positively resolve the conjecture of Behrend--Liao-Xu~\cite[p. 6]{behrend2020derived}. While we were preparing this paper for uploading to arXiv an alternative proof of Corollary \ref{cor:conj} was given by Carchedi \cite[Theorem 5.20]{carchedi2023derived}.
	\begin{corollary}\label{cor:conj}
		A morphism of derived manifolds is a weak equivalence if and only if it induces a quasi-isomorphism on algebras of functions.
	\end{corollary}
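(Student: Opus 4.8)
The plan is to bootstrap the statement for derived manifolds with Cartesian base — which is the content of Theorem~\ref{thm:comparison_for_dCart}, resting in turn on \cite[Theorem 3.13]{behrend2020derived} — to the general case, using that both notions involved are determined by the germs of a derived manifold along its classical locus. Let $f=(g,\phi)\colon\M=(M,L,\lambda)\to\Ns=(N,K,\mu)$ be a morphism of derived manifolds with induced morphism $f^*\colon\CI(\Ns)\to\CI(\M)$ of $DG\CI$-rings.

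Step one is to match the classical loci. If $f^*$ is a quasi-isomorphism then it is an isomorphism on $H_0$, so Proposition~\ref{pro:iso_on_loci} gives an isomorphism of $\CI$-loci $\CL(\M)\cong\CL(\Ns)$, in particular a bijection $\M(\R)\cong\Ns(\R)$; conversely a weak equivalence induces such a bijection by Definition~\ref{def:we_dMan}, and under $\CL(-)=\Specm H_0(\CI(-))$ it is induced by $H_0(f^*)$. Having fixed this identification, both remaining assertions — that $f^*$ is a quasi-isomorphism, and that $T_Pf\colon T_P\M\to T_{P'}\Ns$ is a quasi-isomorphism for every classical point $P$ with image $P'$ — are local along the classical locus: the homology sheaves $\mathcal H_\bullet(\SS_{\CI(M)}L^\vee)$ are supported on $\CL(\M)$, and since $M$ admits smooth partitions of unity the functor of global sections is exact and commutes with homology, so $H_\bullet(f^*)$ is an isomorphism iff the induced map of homology sheaves is a stalk-wise isomorphism at every classical point; the tangent-complex condition is manifestly a pointwise condition at classical points (cf.\ the Remark after Lemma~\ref{lem:alt_tangent}).

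Step two is therefore to pass to Cartesian charts. Around a classical point $P$ choose an open $U\ni g(P)$ in $N$ diffeomorphic to a Cartesian space and an open $V\ni P$ in $M$ with $g(V)\subseteq U$ and $V$ diffeomorphic to a Cartesian space; then $f|_V\colon\M|_V\to\Ns|_U$ is a morphism in $\dCart$, inducing at $P$ the same map of tangent complexes and the same germs of homology sheaves as $f$. By Lemma~\ref{lem:alt_tangent} the tangent complex of a Cartesian derived manifold at a classical point $P$ is $(\ker P/(\ker P)^2)^\vee$, and by Corollary~\ref{cor:sf_dCart_eq} the algebras $\CI(\M|_V),\CI(\Ns|_U)$ are semifree, hence cofibrant; since $\Omega^1_\R$ is left Quillen, this translates the tangent-complex condition into a quasi-isomorphism of Kähler differentials, and Theorem~\ref{thm:comparison_for_dCart} (with \cite[Theorem 3.13]{behrend2020derived}) settles the equivalence for Cartesian derived manifolds. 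Reassembling over the charts gives the corollary.

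The step I expect to be the main obstacle is the locality bookkeeping in steps one and two taken together: one has to argue carefully that the germ of $f^*$ at a classical point — and hence whether $f^*$ is a quasi-isomorphism — is detected after restriction to the chosen Cartesian charts, that Zariski restriction keeps the $DG\CI$-rings semifree and preserves quasi-isomorphisms, and that the restricted maps $f|_V$ genuinely live in $\dCart$. None of this is conceptually deep, but it is where all the care is required; the genuine homotopical content is already contained in Theorem~\ref{thm:comparison_for_dCart}.
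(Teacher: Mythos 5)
Your proposal is correct and follows essentially the same route as the paper: reduce to derived manifolds with Cartesian base via locality of both conditions, and invoke Theorem~\ref{thm:comparison_for_dCart} (resting on \cite[Theorem 3.13]{behrend2020derived}) for the Cartesian case. The paper's own proof simply asserts that the weak-equivalence and quasi-isomorphism conditions are ``clearly local'' and that derived Cartesian charts cover, so your sheaf-theoretic bookkeeping (support of the homology sheaves on the classical locus, exactness of global sections via partitions of unity) is a faithful, more detailed elaboration of the same argument rather than a different one.
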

	\begin{proof}
		By Theorem \ref{thm:comparison_for_dCart}, the functor \(\CI(-)\) reflects weak equivalences on derived Cartesian spaces. The condition of Definition \ref{def:we_dMan} for a map to be a weak equivalence of derived manifolds is clearly local. Since any derived manifold admits a covering by derived Cartesian spaces open at the level of the base manifold, the result follows. 
	\end{proof}
	\begin{theorem}\label{thm:eq_dMan}
		The relative site \((\dMan, \W)\) of derived manifolds and weak equivalences defines a topos of higher sheaves that is Quillen equivalent to the topos of higher sheaves over \(\sfdg\CI\Alg_f^{\op}.\)
	\end{theorem}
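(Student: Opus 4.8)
The plan is to prove Theorem~\ref{thm:eq_dMan} by factoring the asserted Quillen equivalence through the site $\dCart$ of derived manifolds with Cartesian base, equipped with the relative-site structure it inherits from $\dMan$: weak equivalences are those of Definition~\ref{def:we_dMan}, and a covering family of $\M=(\R^{n},L,\lambda)$ is a family $\{(U_{i},L|_{U_{i}},\lambda|_{U_{i}})\to\M\}$ associated to an open cover $\{U_{i}\hookrightarrow\R^{n}\}$ whose members are diffeomorphic to $\R^{n}$ (the restriction of a bundle to such an open is again trivial, so these restrictions do lie in $\dCart$). Since every open cover of $\R^{n}$ admits a refinement of this kind, this coverage coincides with the restriction to $\dCart$ of the (implicit) open-cover coverage on $\dMan$; and since for $\CI$-rings the coverage on algebras is the open-cover coverage of Construction~\ref{con:specm_topology}, it coincides as well with the coverage transferred to $\dCart$ along $\CI(-)$ from $SFDG\CI\Alg_{f}^{\op}$ via Corollary~\ref{cor:sf_dCart_eq}. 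Granting these compatibilities, the proof splits in two: (i) the inclusion $j\colon\dCart\hookrightarrow\dMan$ induces a Quillen equivalence $\Sh(\dCart)\simeq\Sh(\dMan,\W)$; and (ii) $\Sh(\dCart)$ is Quillen equivalent to $\Sh(SFDG\CI\Alg_{f}^{\op})$.

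Part (ii) is the formal one. By Corollary~\ref{cor:sf_dCart_eq} the functor $\CI(-)$ restricts to an equivalence of categories $\dCart\simeq SFDG\CI\Alg_{f}^{\op}$ with inverse the dg locus functor $\dl$; the function algebra of a derived Cartesian manifold is indeed finitely generated, being generated as a $\CI$-ring by the $n$ coordinates of $\R^{n}$ together with the finitely many generators of $L^{\vee}$. Because the coverage on $\dCart$ is transferred along $\CI(-)$, this functor reflects homotopical coverages and hence hypercovers, and by Theorem~\ref{thm:comparison_for_dCart} it is moreover a Dwyer--Kan equivalence of relative categories. Hence $\CI(-)$ is an equivalence of relative sites in the sense of Definition~\ref{def:eq_rel_sites}, and Theorem~\ref{thm:eq_sh_top} delivers the Quillen equivalence $\Sh(\dCart)\simeq\Sh(SFDG\CI\Alg_{f}^{\op})$. (The same step may equally be run on the dg-manifold side through Proposition~\ref{pro:equiv_dg_Cart}.)

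Part (i) is a homotopical version of the comparison lemma for a dense subsite and does \emph{not} follow from Theorem~\ref{thm:eq_sh_top}, since $j$ is far from a Dwyer--Kan equivalence: $\dMan$ contains derived manifolds not weakly equivalent to any object of $\dCart$. (For instance $\mathbb{RP}^{2}$, viewed as a derived manifold with zero bundle, is not: a finite semifree resolution of $\CI(\mathbb{RP}^{2})$ over a Cartesian space would force the conormal bundle of an embedding $\mathbb{RP}^{2}\hookrightarrow\R^{N}$ to be stably free, which it is not.) Nonetheless $j$ is a morphism of relative sites --- it preserves weak equivalences tautologically, and the homotopical lifting property holds because the coverage on $\dCart$ was defined by restriction and every open cover of a Cartesian space admits a refinement by opens diffeomorphic to $\R^{n}$ --- so Proposition~\ref{pro:quil_adj} provides a Quillen adjunction $j_{!}\colon\Sh(\dCart)\rightleftarrows\Sh(\dMan,\W)\colon j^{*}$. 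It remains to show its derived unit and counit are weak equivalences. The derived unit is handled by full faithfulness of $j$: left Kan extension along $j$ followed by sheafification restricts back to the identity on $\dCart$. The derived counit $j_{!}j^{*}\F\to\F$ is a weak equivalence because the descent condition on $\dMan$ exhibits every stack $\F$ as the homotopy right Kan extension along $j$ of its restriction $j^{*}\F$ --- precisely the assertion that $\dCart$ is a basis for the topology of $\dMan$. Alternatively, one transports the $\infty$-categorical comparison lemma for bases of a Grothendieck topology through the presentation of $\Sh$ as a left Bousfield localisation of $\PSh$ (cf.\ \cite{dugger2004hypercovers}).

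I expect part (i) to be the main obstacle. Turning the comparison lemma into a rigorous statement within the relative-site framework of Section~\ref{sec:background} requires handling hypercovers with care and contending with the fact that $\dMan$ and $\dCart$ are only categories of fibrant objects, not full model categories, so that the descent input must be formulated homotopy-coherently rather than strictly. Once (i) and (ii) are in place, composing the two Quillen equivalences proves the theorem; together with the chain of equivalences of Theorem~\ref{thm:site_eq}, which already ties $\Sh(SFDG\CI\Alg_{f}^{\op})$ to the simplicial and DG sheaf topoi of Section~\ref{sec:models}, this places the Behrend--Liao-Xu model of derived manifolds inside the web of equivalences established earlier in the paper.
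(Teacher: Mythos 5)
Your part (ii) is exactly the second half of the paper's argument: $\CI(-)$ and $\dl$ give an equivalence of categories $\dCart\simeq SFDG\CI\Alg_f^{\op}$ (Corollary \ref{cor:sf_dCart_eq}), Theorem \ref{thm:comparison_for_dCart} upgrades it to a Dwyer--Kan equivalence reflecting coverages, and Theorem \ref{thm:eq_sh_top} yields the Quillen equivalence of sheaf topoi. The divergence is entirely in part (i). The paper disposes of the passage from $\dMan$ to $\dCart$ in one sentence: it asserts that cofibrant (semifree) replacement of function algebras induces an \emph{equivalence of relative sites} $(\dMan,\W)\to(\dCart,\W)$ in the sense of Definition \ref{def:eq_rel_sites}, and applies Theorem \ref{thm:eq_sh_top} a second time. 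You argue instead that no Dwyer--Kan equivalence between these relative categories can exist, and your obstruction is sound: a zigzag of weak equivalences (Definition \ref{def:we_dMan}) preserves the classical locus and the pointwise tangent complexes, so a weak equivalence between $(\mathbb{RP}^2,0,0)$ and some $(\R^n,L,\lambda)\in\dCart$ would produce an exact sequence of vector bundles $0\to T\mathbb{RP}^2\to\underline{\R^n}\to L^1|_Z\to\cdots\to L^k|_Z\to 0$ over $Z\cong\mathbb{RP}^2$ with every $L^i$ trivial, forcing $T\mathbb{RP}^2$ to be stably trivial, which contradicts $w_1(T\mathbb{RP}^2)\neq 0$. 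Equivalently, $\CI(\mathbb{RP}^2)$ admits no finitely generated semifree model, so the replacement functor does not land in $SFDG\CI\Alg_f^{\op}\simeq\dCart$. On this point your diagnosis is a genuine correction to the paper: the step really is of dense-subsite/comparison-lemma type (as for $\Sh(\Cart)\simeq\Sh(\mathrm{Man})$), not of Dwyer--Kan type, and Theorem \ref{thm:eq_sh_top} is not the right tool for it.

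The gap in your proposal is that your substitute for this step remains a sketch, and you say so yourself. The assertion that the derived counit $j_!j^*\F\to\F$ is an equivalence ``because descent exhibits every stack as the homotopy right Kan extension of its restriction'' is precisely the statement to be proved, not an argument for it. To close it you would need: (a) a precise homotopical comparison lemma in the relative-site framework of Section \ref{sec:background} (or a transport of the $\infty$-categorical one through the Bousfield-localization presentation of Definition \ref{def:sheaves}); (b) the verification that every object of $\dMan$ admits a hypercover by objects of $\dCart$ --- note that for a Cartesian atlas $\{U_i\}$ of the base $M$ the intersections $U_i\cap U_j$ are generally not Cartesian, so one must either take a good cover of $M$ (all finite intersections diffeomorphic to some $\R^m$) or pass to genuine hypercovers; and (c) the compatibility of the implicit coverage on $\dMan$ with its restriction to $\dCart$, which you assert but do not pin down against a stated coverage on $\dMan$. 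None of these is likely to fail, but as written part (i) is a programme rather than a proof --- and it sits exactly at the point where the paper's own one-line argument is insufficient, so it cannot be outsourced to the paper either.
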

	\begin{proof}
		There is an equivalence of relative sites \((\dMan,\W)\to (\dCart,\W)\) induced by the fibrant replacement functor in \(\dg\CI\Alg^{\op}.\) It remains to apply Theorem \ref{thm:comparison_for_dCart} to see that the functor \(\dl\) is an equivalence of relative sites. Hence, we have an equivalence of sheaf topoi by Theorem \ref{thm:eq_sh_top}.
	\end{proof}
	\subsection{Derived differential cohomology}\label{sec:dif_coh}
	In this section, we define differential cohomology of derived differentiable stacks. For a general treatment of differential forms on derived algebraic stacks see Calaque--Pantev--To\"{e}n--Vaqui\'{e}--Vezzosi \cite[\S 1.3.2, 1.4.1]{calaque2017shifted}.
	\begin{definition}\label{def:KD}
		Let \(\Rc=l(\A)\) be an object of \(d\Cart_{\CI}\) corresponding to \({\A\in \sfdg\CI\Alg.}\) Then we define the module of K\"{a}hler differentials \(\Omega_{K}^1(\Rc)\) of \(\Rc\) as the value of the left adjoint to the forgetful functor
		\[
		\UU:\Ab(\dg\CI\Alg/\A)\to \dg\CI\Alg
		\]
		on the object \(\A.\)
	\end{definition}
	Note that the definition above is just the standard construction of K\"{a}hler differentials in terms of Beck modules (see, e.g., Quillen \cite{quillen1970co}). One can also give a more explicit version of the same definition following Dubuc--Kock \cite[\S 2]{dubuc19841}.
	\begin{definition}\label{def:KD_der}
		Let \(\Rc=l(\A)\) be an object of \(d\Cart_{\CI}\) corresponding to \({\A\in \sfdg\CI\Alg.}\)	Let \(N\) be a differential graded module over the algebra \(\A\) (regarded as a commutative dg algebra). Then a map \(\delta:A\to N\) is a \(\CI\)-\emph{derivation} if it is a derivation in the usual graded commutative sense and also for any smooth \(\phi:\R^n\to \R\) and any collection \(\alpha_1,\ldots,\alpha_n\in\A_0\) we have
		\[
			\delta(\phi(\alpha_1,\ldots,\alpha_n))=\sum_{i=1}^n\frac{\pd\phi}{\pd x_i}(\alpha_1,\ldots,\alpha_n)\cdot \delta(\alpha_i).
		\]
		We denote by \(\Der_\A(N)\) the module of \(\CI\)-derivations of \(\A\) with values in \(N.\) The module of Kahler differentials of \(\Rc\) is identified with the object in \(\Ab(\dg\CI\Alg/\A)\) which corepresents the following functor:
		\[
			\dg\CI\Alg/\A\ni N\mapsto \Der_\A(N).
		\]
	\end{definition}
	\begin{lemma}
		Constructions of definitions \ref{def:KD} and \ref{def:KD_der} yield canonically isomorphic object.
	\end{lemma}
	\begin{proof}
		Let \(\A\) be a dual to an object \(\Rc\) of \(d\Cart_{\CI}.\)	Then the category \(\Ab(\dg\CI\Alg/\A)\) forms a subcategory of the category of the category \(\Ab(\dg\Com\Alg/\A).\) Moreover, each module \(M\) over \(\A\) is naturally a module over \(\A_0.\) By a result of Dubuc--Kock \cite[Proposition 2.2]{dubuc19841} \(\CI\)-derivations of \(\A_0\) valued in \(M\) could be identified with morphisms from \(\Omega^1_K(\A_0^{\op}).\) It remains to note that this natural bijection extends to derivations of \(\A\) because any \(\CI\)-derivation of \(\A\) valued in \(M\) is by definition a graded-commutative derivation which restricts to a \(\CI\)-derivation of \(\A_0.\)
	\end{proof}
	\begin{definition}\label{def:dif_forms}
		Let \(\Rc=l(\A)\) be an object of \(d\Cart_{\CI}\) corresponding to \({\A\in \sfdg\CI\Alg.}\) Then we define the cotangent complex of \(\Rc\) as the left derived functor of the functor of K\"{a}hler differentials (Definition \ref{def:KD}):
		\begin{equation}\label{eq:forms}
			L_{\Rc}=\LL\Omega_{K}^1\Rc.
		\end{equation}
		This definition then immediately gives a presheaf on \(d\Cart_{\CI}.\) Note that because the site \(d\Cart_{\CI}\) consists of cofibrant objects the total left derived functor in formula \eqref{eq:forms} is actually not needed. The object \(L_\Rc\) actually coincides with the complex of K\"{a}hler differentials \(\Omega^1_K\Rc.\) 
	\end{definition}
	\begin{construction}\label{con:int_dR}
		Clearly, Definition \ref{def:dif_forms} and its homotopy invariance allows us to define sheaves of differential forms for any object in the higher sheaf topos \(\Sh(d\Cart_{\CI})\) (see Definition~\ref{def:sheaves}). Indeed, we can just apply the symmetric algebra functor to the functor \(L_\Rc\) shifted by \(-1\) thus obtaining the presheaf \(\Omega^*.\) Taking the derived mapping space into \(\Omega^*\) defines a sheaf of differential forms on \(\Sh(d\Cart_{\CI})\). Thus the result is the \emph{de Rham complex of stacks}:
		\begin{equation}\label{eq:dR}
			\Omega^*=\left[\underbrace{\left(\CI(-)=\Omega^0\right)}_{\deg=0}\xrightarrow{d}\underbrace{\left(L_{(-)}=\Omega^1\right)}_{\deg=-1}\xrightarrow{d}\underbrace{\left(\SS^2\Omega^1=\Omega^2\right)}_{\deg=-2}\to\ldots\right] 
		\end{equation}
	\end{construction}
	\par The construction above is a direct extension of a specialization of the definition given by Pridham in \cite[Definition 1.4]{pridham2018outline} for the \emph{superdegree zero case}, i.e. for super NQ manifolds with function algebras consisting only of even elements. 
	\par Now we define the Deligne complex. For a reference on the ordinary Deligne complex and its relation to classical differential cohomology, see~\cite[\S 7.3]{amabel2021differential}.
	\begin{definition}[Deligne complex]\label{def:dif_coh}
		Denote by \(\Z\) the locally constant sheaf of \(\Z\)-valued functions on~\(d\Cart_{\CI}.\)
		For \(k\ge 1\) consider a complex of sheaves of chain complexes (i.e. a sheaf bicomplex):
		\begin{gather}\label{eq:Deligne_complex}
		\D^k=\left[\underbrace{\Z}_{\deg=0}\xhookrightarrow{2\pi i} \underbrace{\Omega^0}_{\deg=-1}\xrightarrow{d} \underbrace{\Omega^1}_{\deg=-2}\xrightarrow{d} \underbrace{\Omega^2}_{\deg=-3}\xrightarrow{d} \ldots\xrightarrow{d} \underbrace{\Omega^{k-1}}_{\deg=-k}\xrightarrow{d} 0\to 0\to \ldots\right].
		\end{gather}
		Now we define the {\it \(k\)-th differential cohomology group} \(\hat{H}^k(X)\) as 
		\begin{equation}\label{equation:dif_coh_as_coh_of_a_space}
			\hat{H}^k(X)=H^k(\operatorname{Tot}[X,\D^k],d).
		\end{equation}
		Here \([-,-]\) is the set of homotopy classes of maps, while \(\operatorname{Tot}\) stands for the totalization complex obtained by taking the sum of the exterior differential and the differential of the cotangent complex.
	\end{definition}
	\begin{remark} Note that the differential cohomology of a stack \(X\) admits a slightly different description in terms of the derived mapping space. Namely, \(\hat{H}^k(X)=H^k(\ul{\Hom}(X,\mc{D})),\) where the right hand side is the derived mapping space between the two stacks.
	\end{remark} 
	When compared with the approach of Calaque--Pantev--To\"{e}n--Vaqui\'{e}--Vezzosi \cite{calaque2017shifted} our construction \eqref{eq:Deligne_complex} recovers the Deligne version of the \emph{internal} de Rham complex, while construction \eqref{equation:dif_coh_as_coh_of_a_space} calculates cohomology of the \emph{absolute} Deligne complex (see Calaque--Pantev--To\"{e}n--Vaqui\'{e}--Vezzosi \cite[Proposition 1.3.8, Definition 1.3.9]{calaque2017shifted}). Indeed, the de Rham complex \eqref{eq:dR} of Construction \ref{con:int_dR} is just a left adjoint of the functor which takes the degree \(0\) subalgebra in differential bigraded \(\CI\)-algebras in sheaves on \(d\Cart_{\CI}.\) Hence the resulting complex in the category of sheaves is the \(\CI\)-version of the internal de Rham complex of Calaque--Pantev--To\"{e}n--Vaqui\'{e}--Vezzosi.
	\begin{remark}
		Note that by the above the Deligne complex is very different from being a mere free algebra on degree \(1\) differential forms. Indeed, it carries two differentials, one of which is the exterior differential, while the other one is the internal differential of the cotangent complex. Hence differential cohomology is a non-trivial combination of the two quite different in spirit kinds of data.
	\end{remark}
	\begin{remark}
		Since all presheaves except \(\Z\) in \(\D^k\) are fibrant, Definition \ref{def:dif_coh} is homotopy invariant. That is, we don't need to make a fibrant replacement on the nose or a cofibrant replacement on the tail in \([X,\D^k]\) to obtain \(\pi_0\) of a derived mapping space. 
	\end{remark}
	\begin{remark}
		Clearly Definitions \ref{def:KD}-\ref{def:dif_forms} apply to arbitrary Fermat theories and hence Definition \ref{def:dif_coh} makes sense for any Fermat theory as well.
	\end{remark}
	

\begin{thebibliography}{10}
		
		\bibitem{alexandrov1997geometry}
		Mikhail Alexandrov, Maxim Kontsevich, Albert Schwarz, and Oleg Zaboronsky. The geometry of the master equation and topological quantum field theory. International Journal of Modern Physics A (1997), 12:7, 1405--1429. \arXiv{hep-th/9502010}, \doi{10.1142/s0217751x97001031}.
		
		\bibitem{amabel2021differential}
		Araminta Amabel, Arun Debray, and Peter~J.~Haine. Differential cohomology: categories, characteristic classes, and connections. \arXiv{2109.12250}.
		
		\bibitem{andre2013homologie}
		Michel Andr{\'e}. Homologie des algebres commutatives. Volume~206, (Springer, 1974). \doi{10.1007/978-3-642-51449-4}.
		
		\bibitem{artin1973theorie}
		Michael Artin, Alexandre Grothendieck, and Jean~Louis Verdier. Th{\'e}orie des topos et cohomologie etale des sch{\'e}mas: tome 3, (Springer, 1973). \doi{10.1007/BFb0061319}.
		
		\bibitem{artin2006etale}
		Michael Artin and Barry Mazur. Etale homotopy. Volume~100, (Springer, 2006). \doi{10.1007/bfb0080957}.
		
		\bibitem{behrend2020derived}
		Kai Behrend, Hsuan-Yi Liao, and Ping Xu. Derived differentiable manifolds. \arXiv{2006.01376}.
		
		\bibitem{borisov2011simplicial}
		Dennis Borisov and Justin Noel. Simplicial approach to derived differential manifolds.
		\arXiv{1112.0033}.
		
		\bibitem{brown1973abstract}
		Kenneth~S.~Brown. Abstract homotopy theory and generalized sheaf cohomology. Transactions of the American Mathematical Society (1973), 186, 419--458. \doi{10.1090/s0002-9947-1973-0341469-9}.
		
		\bibitem{calaque2017shifted}
		Damien Calaque, Tony Pantev, Bertrand To{\"e}n, Michel Vaqui{\'e}, and Gabriele
		Vezzosi. Shifted poisson structures and deformation quantization. Journal of topology (2017), 10:2, 483--584. \arXiv{1506.03699}, \doi{10.1112/topo.12012}.
		
		\bibitem{carchedi2023derived}
		David Carchedi. Derived manifolds as differential graded manifolds. \arXiv{2303.11140}.
 		
		\bibitem{carchedi2012homological}
		David Carchedi and Dmitry Roytenberg. Homological algebra for superalgebras of differentiable functions. \arXiv{1212.3745}.
		
		\bibitem{carchedi2012theories}
		David Carchedi and Dmitry Roytenberg. On theories of superalgebras of differentiable functions. \arXiv{1211.6134}.
		
		\bibitem{dubuc1981c}
		Eduardo~J.~Dubuc. $\CI$-schemes. American Journal of Mathematics (1981), 103:4, 683--690. \doi{10.2307/2374046}.
		
		\bibitem{dubuc19841}
		Eduardo~J.~Dubuc and Anders Kock. On 1-form classifiers. Communications in Algebra (1984), 12:12, 1471--1531. \doi{10.1080/00927878408823064}.
		
		\bibitem{dugger2004hypercovers}
		Daniel Dugger, Sharon Hollander, and Daniel~C.~Isaksen. Hypercovers and simplicial presheaves. In Mathematical Proceedings of the Cambridge Philosophical Society, Volume~136, (Cambridge University Press, 2004), 9--51. \arXiv{math/0205027}, \doi{10.1017/s0305004103007175}.
		
		\bibitem{dwyer1987equivalences}
		William~G.~Dwyer and Daniel~M.~Kan. Equivalences between homotopy theories of diagrams. Algebraic topology and algebraic K-theory, Annals of Math. Studies (1987), 113, 180--205. \doi{10.1515/9781400882113-009}.
		
		\bibitem{dwyer2004homotopy}
		William~G.~Dwyer, Philip~S.~Hirschhorn, Daniel~M.~Kan, and Jeffrey~H.~Smith. Homotopy limit functors on model categories and homotopical categories. Number~113, (AMS, 2004). \doi{10.1090/surv/113}.
		
		\bibitem{dwyer1980simplicial}
		William~G.~Dwyer and Daniel~M.~Kan. Simplicial localizations of categories. Journal of Pure and Applied Algebra (1980), 17:3, 267--284. \doi{10.1016/0022-4049(80)90049-3}.
		
		\bibitem{gelfand2013methods}
		Sergei~I.~Gelfand and Yuri~I.~Manin. Methods of homological algebra (Springer, 1996). \doi{10.1007/978-3-662-03220-6}.
		
		\bibitem{goerss2009simplicial}
		Paul~G.~Goerss and John~F.~Jardine. Simplicial homotopy theory (Springer, 2009). \doi{10.1007/978-3-0346-0189-4}.
		
		\bibitem{gwilliam2018enhancing}
		Owen Gwilliam and Dmitri Pavlov. Enhancing the filtered derived category. Journal of Pure and Applied Algebra (2018), 222:11, 3621--3674. \arXiv{1602.01515}, \doi{10.1016/j.jpaa.2018.01.004}.
		
		\bibitem{hirschhorn2003model}
		Philip~S.~Hirschhorn. Model categories and their localizations. Number~99, (AMS, 2003). \doi{10.1090/surv/099}.
		
		\bibitem{hovey2007model}
		Mark Hovey. Model categories, Number~63, (AMS, 1999). \doi{10.1090/surv/063}. 
		
		\bibitem{jardine2006intermediate}
		J.~F.~Jardine. Intermediate model structures for simplicial presheaves. Canadian Mathematical Bulletin (2006), 49:3, 407--413. \doi{10.4153/cmb-2006-040-8}.
		
		\bibitem{lurie2004derived}
		Jacob Lurie. Derived algebraic geometry. PhD thesis, MIT, 2004. \url{http://hdl.handle.net/1721.1/30144}.
		
		\bibitem{lurie2009higher}
		Jacob Lurie. Higher topos theory, (Princeton University Press, 2009). \doi{10.1515/9781400830558}.
		
		\bibitem{maclane2012sheaves}
		Saunders MacLane and Ieke Moerdijk. Sheaves in geometry and logic: A first introduction to topos theory, (Springer, 1994). \doi{10.1007/978-1-4612-0927-0}.
		
		\bibitem{moerdijk2013models}
		Ieke Moerdijk and Gonzalo~E.~Reyes. Models for smooth infinitesimal analysis, (Springer, 2013). \doi{10.1007/978-1-4757-4143-8}.
		
		\bibitem{nuiten2018lie}
		J.~Nuiten. Lie algebroids in derived differential topology. PhD thesis, Utrecht University (2018). \url{https://imag.umontpellier.fr/~nuiten/Writing/LieAlgdDerDiffTopScreen.pdf}. 

		\bibitem{pridham2018outline}
		J. P.~Pridham. An Outline of Shifted Poisson Structures and Deformation Quantisation in Derived Differential Geometry, 2018. \arXiv{1804.07622}.

		
		\bibitem{pridham2020differential}
		J.P.~Pridham. A differential graded model for derived analytic geometry. Advances in Mathematics (2020), 360, 106922. \arXiv{1805.08538}, \doi{10.1016/j.aim.2019.106922}.
		
		\bibitem{quillen1968homology}
		Daniel Quillen.
		\newblock Homology of commutative rings, mimeographed notes (1968). Unpublished.			\url{https://web.archive.org/web/20150420184538/http://chromotopy.org/paste/quillen.djvu}.
		
		\bibitem{quillen1969rational}
		Daniel Quillen. Rational homotopy theory. Annals of Mathematics (1969), 205--295. \doi{10.2307/1970725}.
		
		\bibitem{quillen1970co}
		Daniel Quillen. On the (co-)homology of commutative rings. In Proc. Symp. Pure Math, volume~17 (1970), 65--87. \doi{10.1090/pspum/017/0257068}.
		
		\bibitem{quillen2006homotopical}
		Daniel Quillen. Homotopical algebra, volume~43 (Springer, 2006). \doi{10.1007/bfb0097438}.
		
		\bibitem{riehl2014categorical}
		Emily Riehl. Categorical homotopy theory, volume~24 (Cambridge University Press, 2014). \doi{10.1017/CBO9781107261457}.
		
		\bibitem{schwarz1993semiclassical}
		Albert Schwarz. Semiclassical approximation in Batalin-Vilkovisky formalism.
		Communications in Mathematical Physics, 158 (1993), 373--396. \arXiv{hep-th/9210115}, \doi{10.1007/BF02108080}. 
		
		\bibitem{spivak2010derived}
		David~I.~Spivak. Derived smooth manifolds.
		Duke Mathematical Journal, 153:1 (2010), 55--128. \arXiv{0810.5174}, \doi{10.1215/00127094-2010-021}.
		
		\bibitem{tierney1976spectrum}
		Myles Tierney. On the spectrum of a ringed topos.
		In "Algebra, Topology, and Category Theory", {\it A Collection of Papers in Honor of Samuel Eilenberg} (Elsevier, 1976), 189--210. \doi{10.1016/C2013-0-10841-0}.
		
		\bibitem{toen2005homotopical}
		Bertrand To{\"e}n and Gabriele Vezzosi.
		Homotopical algebraic geometry I: Topos theory.
		Advances in mathematics 193:2 (2005), 257--372. \arXiv{math/0207028}, \doi{10.1016/j.aim.2004.05.004}.
		
		\bibitem{vezzosi2011derived}
		Gabriele Vezzosi. Derived critical loci I-basics.
		\arXiv{1109.5213}.
		
	\end{thebibliography}
\end{document}